\newtheorem{theorem}{Theorem}[section]
\newtheorem{Lemm}[theorem]{Lemma}
\newcommand{\p}{\partial}
\newcommand{\R}{\mathbb{R}}
\newcommand{\para}[1]{\left(#1\right)}
\begin{document}
\title[An inverse problem of finding two time-dependent coefficients]{An inverse problem of finding two time-dependent coefficients in second order hyperbolic equations from Dirichlet to Neumann map}
\author[M.~Bellassoued]{Mourad~Bellassoued}
\author[I.~Ben A\"{\i}cha]{Ibtissem Ben A\"{\i}cha}
\address{M.~Bellassoued.  Universit\'e de Tunis El Manar, Ecole Nationale d'Ing\'enieurs de Tunis, LAMSIN, B.P. 37, 1002 Tunis, Tunisia.}
\email{mourad.bellassoued@enit.utm.tn }
\address{I.~Ben A\"{\i}cha. Universit\'e de Tunis El Manar, Ecole Nationale d'Ing\'enieurs de Tunis, LAMSIN, B.P. 37, 1002 Tunis, Tunisia.}
\email{ibtissem.benaicha@enit.utm.tn}
\date{\today}
\subjclass[2010]{Primary 35L20, 65M32} 
\keywords{Hyperbolic inverse problem, time-dependent coefficient, stability estimate, Dirichlet-to-Neumann map.}
\begin{abstract}
In the present paper, we consider a non self adjoint hyperbolic operator with a vector  field and an electric potential that depend not only on the space variable but also on the time variable. More precisely, we attempt to stably and simultaneously retrieve  the real valued velocity field and  the real valued potential from the knowledge of Neumann measurements performed on the whole  boundary of the domain. We establish in dimension $n$ greater than two,  stability estimates for the problem under consideration. Thereafter, by enlarging the set of data we show that the unknown terms can be stably retrieved in larger regions including the whole domain. The proof of the main results are mainly based on the reduction of the inverse problem under investigation  to an equivalent and classic inverse problem for an electro-magnetic wave equation. 
\end{abstract}
\maketitle 
\section{Introduction and  main results}\label{Section 1}
 Let $T>0$ and  $\Omega\subset \R^{n}$ with  $n\geq2$, be a  bounded domain with a sufficiently smooth boundary $\Gamma=\p\Omega$. A lot of physical phenomena can be described by partial differential equations and in this paper we are interested in the wave propagation phenomenon which is described by the following hyperbolic equation 
 $$ \mathcal{L}_{V,p}=\p_{t}^{2}-\Delta+V(x,t)\cdot \nabla+p(x,t)$$
 with a real valued time-space dependent velocity field $V=(V^1,...,V^n)\in\! \mathcal{C} ^{3}(Q,\R^{n})$. This equation is also disturbed by an electric potential  $p\in \mathcal{C}^{1}(Q,\R)$ which is a function of both variables: $x$ which is the spatial variable that is assumed to live in the bounded domain $\Omega$ and and the time variable $t\in (0,T)$. We denote by $Q=\Omega\times(0,T)$ the cylindrical domain of propagation and by $\Sigma=\Gamma\times(0,T)$ its lateral boundary.  To state things clearly, the main purpose of this paper is the study of the inverse problem of determining the two time-space dependent terms $V$ and $p$ from measurements made on the solution $u$ of the following system
\begin{equation}\label{1.1}
\left\{
  \begin{array}{ll}
   \mathcal{L}_{V,p}u=0 & \mbox{in} \,Q, \\
   \\
    u(\cdot,0)= u_0,\,\,\p_{t}u(\cdot,0)=u_1 & \mbox{in}\, \Omega,  \\
    \\
    u=f & \mbox{on} \,\Sigma,
  \end{array}
\right.
\end{equation}  
where   
$u_0\!\in\! H^1(\Omega)$ and $u_1\!\in\! L^2(\Omega)$ are the initial conditions and  $ f\!\in\!\mathscr{H}^{1}_{0}(\Sigma)\!:=\!\{ f\!\in\! H^{1}(\Sigma),\,\,f(\cdot,0)=u_{0|\Gamma}\}$ is a non homogeneous Dirichlet data that is used to probe the system. More precisely, we will focus on the stability issue. Namely, we hope to know weather the unknown terms depend or not, on the observed measurements.  Before dealing with the problem under consideration, let us state in brief some of the results that are relevant to this problem. 

The study of inverse coefficients problems for partial differential equations is one of the most rapidly growing mathematical research area in the recent years.  There is a wide mathematical literature on this issue, but it is mainly concerned with unknown  coefficients of order zero on space.  In \cite {[R23]} Rakesh and Symes proved the uniqueness for the problem of determining the time independent potential in the wave equation from the global Dirichlet to Neumann (DN) map based on the construction of geometric optics solutions. In  \cite{[R17]}, Eskin \cite{[R13]} and Isakov showed a unique determination of the potentiel from the knowledge of the local DN map.  Bellassoued, Choulli and Yamamoto in \cite{[R5]}  
treated the  stability  where the Neumann observations are on any arbitrary  sub-boundary.  Otherwise, in \cite{[13]} the authors showed a H\"older type stability estimate for the determination of a coefficient in a subdomain from the local DN map.  As regards stability from measurements made on the whole boundary, one can see 
 \cite{[R30]} and \cite{[R11]}. 

There are also many publications related to this kind of inverse problems in Riemannian case. We state for example the paper of Bellassoued and Dos  Santos Ferreira \cite{[R6]}, Stefanov and Uhlmann \cite{[R29]}.  Other than the mentioned papers, the recovery of time-dependent coefficients in hyperbolic equations has also been developped recently, we refer e.g to \cite{[B&I],[B&I2],[I],[Y]} and the references therein for reader's curiosity. 
  
 If the coefficient to be determined is of order $1$  on space, we quote for example the paper of  Pohjola \cite{[Poh]} who  studied the determination of a velocity field in a steady state convection diffusion equation from the DN map and  proved a uniqueness result for that problem. The same problem was considered in  \cite{[cheng]} where the uniqueness for coefficients of less regularity was proved . The case of Lipschitz continuous coefficients was seen by Salo \cite{[salo]}.  The stability issue was considered the first time by  Bellassoued and Ben A\"icha \cite{[new]}. They established the stable recovery a space-dependent velocity field in a non self adjoint hyperbolic equation.
 
 In this paper, we would like to know whether it is possible to recover the
velocity field $V$ and the electric potential $p$ in the cylindrical domain $Q$ by controlling the response of the medium on  the whole boundary  $\Sigma,$  after being probed  with non homogeneous  Dirichlet 
disturbances.

 We will head toward three different situations. We will change each time the considered set of measurements, and  in each case  stability estimates will be established  for the recovery of the unknown terms in different areas. In the first and 
  second cases, the initial data will be fixed to zero. To state things clearly, in the first case, the lateral observations are mathematically modeled by the so-called Dirichlet-to-Neumann map $\Lambda_{V,p}:\mathscr{H}^{1}_{0}(\Sigma)\longrightarrow L^{2}(\Sigma)$ associated with $\mathcal{L}_{V,p} $  with $(u_0,u_1)= (0,0)$ defined  by $\Lambda_{V,p}(f)=\p_{\nu}u$. 
  The vector  $\nu$ is the unit outward normal to the the boundary $\Gamma$ at the point $x$ and $\p_{\nu}u$ denotes the quantity  $\nabla u\cdot \nu$. More precisely, we would like to know if a small perturbation on the  boundary measurement $\Lambda_{V,p}$ can cause an error in the determination of $V$ and $p$. It seems that this paper is the first dealing with the simultaneous recovery of time and space dependent terms of order one and zero on space appearing in a non self adjoint hyperbolic operator. 

 Inspired by the work of \cite{[new]},  we firstly adress the stability issue for the inverse problem under investigation and we show that the velocity field $V$ and the electric potential $p$ stably depend on the DN map $\Lambda_{V,p}$ and we establish a stability estimate of $\log$ type for the recovery of $V$ and a stability of $\log$-$\log$ type for the recovery of $p$ via the DN map    $\Lambda_{V,p}$ in a precise region of the cylindrical domain $Q$ provided that they are known outside of this region.  Note that in the case where the initial conditions are fixed to zero and in light of  domain of dependence arguments, it is hopeless to recover  time-dependent coefficients in hyperbolic operators everywhere, even from the knowledge of global Neumann observations because  the value of the solution depend on the value of the initial data.  For more details about this issue, one can see \cite{[R15]}.

   In order to state our main results, we need  first to set up some notations and terminologies that will be used in the rest of the paper. Let $r>0$ be such that $T>2r$. We assume that the spatial domain $\Omega$ satisfies 
$$\Omega\subseteq B\Big(0,\frac{r}{2}\Big)=\Big\{x\in
\R^{n},\,|x|<\frac{r}{2}\Big\}.$$
 We denote by  $Q_{r}=B(0,r/2)\times(0,T)$ the domain of propagation  and by $\mathscr{C}_r$  the follwonig subdomain 
 $$\mathscr{C}_r=\left\{x\in\R ^{n},\,\,\displaystyle\frac{r}{2}<|x|<T-\displaystyle\frac{r}{2}\right\}.$$
On the other hand, we denote by $\mathcal{F}_r$  the following forward light cone  defined as follows
$$\mathcal{F}_{r}=\left\{(x,t)\in Q_{r},\,\,|x|<t-\displaystyle\frac{r}{2},\,t>\frac{r}{2}\right\},$$
and by $\mathcal{B}_r$ the backward light cone  given by the following set
$$\mathcal{B}_r=\left\{(x,t)\in Q_{r},\,\,|x|<T-\displaystyle\frac{r}{2}-t,\,T-\frac{r}{2}> t\right\}.$$
Let us define $\mathcal{I}_r=\mathcal{F}_r\cap \mathcal{B}_r$ and  $\mathcal{I}_r^*= Q\cap \mathcal{I}_r.$  We notice here that when $\Omega=B(0,r/2),$ one gets  $\mathcal{I}_r^*=\mathcal{I}_r$. 
For  $M\!>\!0$,   let us denote 
$$\mathcal{S}\,(M)=\Big\{(V,p)\in \mathcal{C}^{3}(Q,\R^{n})\times \mathcal{C}^1(Q,\R),\,\,\|V\|_{{W}^{3,\infty}(Q)}+\|p\|_{W^{1,\infty}(Q)}\leq M \Big\}.$$
\begin{theorem}\label{Theorem 1.1}
For $T\!\!>\!2\,Diam\, (\Omega)$, there exist  $C,\mu>0$ such that 
if  $\|\Lambda_{V_{1},p_{1}}-\Lambda_{V_{2},p_{2}}\|\leq m\in(0,1),$ then 
$$\|V_{1}-V_{2}\|_{L^{\infty}(\mathcal{I}_r^*)}\leq C \,|\log\|\Lambda_{V_{2},p_{2}}-\Lambda_{V_{1},p_{1}}\||^{-\mu},$$
 for all $(V_{j}, p_j)\in\!\mathcal{S}(M)$ satisfying $\|V_j\|_{H^{\alpha}(Q)}+\|p_j\|_{H^{\alpha}(Q)}\leq M,\, j=1,2,$ for some  $\alpha\!\!>\!\!n/2+1$ while assuming  that  $(V_1,p_1)\!\!=\!\!(V_2,p_2)$ in $\overline{Q}_{r}\setminus \mathcal{I}_r^*$ and $\mbox{div$_{x}$} (V_{1})=\mbox{div$_{x}$} (V_{2})$.  Moreover, there exist $C',\mu'>0$ such that   
 $$\|p_2-p_1\|_{L^{\infty}(\mathcal{I}_r^*)}\leq C' \Big( \log |\log \|\Lambda_{V_{2},p_{2}}-\Lambda_{V_{1},p_{1}}\||\Big)^{-\mu'}, $$
holds true. Here $C$ and $C'$ depend only on $\Omega$ and $M$.
\end{theorem}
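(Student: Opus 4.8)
\emph{Strategy and reduction.} The plan is to adapt the geometric optics machinery of \cite{[new]} to the time-dependent, non self adjoint setting, treating the first order term $V\cdot\nabla$ as a magnetic-type perturbation and $p$ as the electric one; this is the reduction to an electro-magnetic wave equation announced in the introduction. Write $\tilde V=V_1-V_2$ and $\tilde p=p_1-p_2$, which vanish on $\overline Q_r\setminus\mathcal I_r^*$ by hypothesis, fix a direction $\omega\in\s^{n-1}$, and take the null phase $\psi(x,t)=x\cdot\omega-t$, which solves the eikonal equation $(\partial_t\psi)^2=\abs{\nabla\psi}^2$. For a large parameter $\sigma>0$ I would construct geometric optics solutions $u_1=a_1\,e^{i\sigma\psi}+r_1$ of $\mathcal L_{V_1,p_1}u_1=0$ and $u_2=a_2\,e^{i\sigma\psi}+r_2$ of the adjoint equation $\mathcal L_{V_2,p_2}^{*}u_2=0$, where each amplitude $a_j$ solves the transport equation produced by cancelling the $O(\sigma)$ term (so $a_j$ carries an exponential of a light-ray integral of $V_j\cdot\omega$), and the remainders are made $O(\sigma^{-1})$ by a two-term amplitude expansion together with an energy estimate for the wave operator. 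The light-cone geometry enters precisely here: since the Cauchy data are zero, propagation confines the reconstruction to the forward and backward cones, which is exactly why the coefficients can be reached only on $\mathcal I_r=\mathcal F_r\cap\mathcal B_r$.

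\emph{Integral identity and the light-ray transform.} Next I would derive the Alessandrini-type identity
\begin{equation*}
\int_Q\para{\tilde V\cdot\nabla u_1+\tilde p\,u_1}\,\overline{u_2}\,dx\,dt=\seq{(\Lambda_{V_1,p_1}-\Lambda_{V_2,p_2})f,\ g},
\end{equation*}
obtained by testing the equation for $w=u_1-u_2$ (same Cauchy and Dirichlet data, hence $w$ vanishing to first order on $\Sigma$) against a solution of the adjoint problem; the right hand side is bounded by $m=\norm{\Lambda_{V_1,p_1}-\Lambda_{V_2,p_2}}$ times boundary norms of the geometric optics solutions. Substituting $u_1,u_2$ makes the phases cancel and leaves $\nabla u_1\approx i\sigma a_1\omega\,e^{i\sigma\psi}$, so the $O(\sigma)$ part of the left hand side is $i\sigma\int_Q(\tilde V\cdot\omega)\,a_1\overline{a_2}\,dx\,dt$; dividing by $\sigma$ produces a weighted light-ray transform of $\tilde V\cdot\omega$ along the null lines $s\mapsto(x+s\omega,s)$, up to a controlled error. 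The $O(1)$ part, which contains $\int_Q\tilde p\,a_1\overline{a_2}\,dx\,dt$, is set aside for the second stage. The key quantitative input is that the construction and the energy estimate yield a bound of the form $\abs{\mathcal X(\tilde V\cdot\omega)(\xi)}\le C\para{e^{C\sigma}m+\sigma^{-1}}$ for transverse frequencies $\abs\xi\le\sigma$, the exponential factor $e^{C\sigma}$ reflecting the growth of the geometric optics solutions over the finite time interval; this exponential dependence is what ultimately forces logarithmic rather than H\"older stability.

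\emph{Recovery of $V$.} Recording the transform for all $\omega$ and Fourier transforming in the transverse variables recovers $\widehat{\tilde V}(\xi)$ for $\abs\xi\lesssim\sigma$. The light-ray transform only sees the solenoidal part of a vector field, and this gauge ambiguity is removed by the hypothesis $\dive_x V_1=\dive_x V_2$, i.e. $\dive_x\tilde V=0$, which pins down $\tilde V$ uniquely. Splitting frequencies at $\abs\xi=\rho\lesssim\sigma$, estimating the low modes by the reconstruction above and the high modes by the a priori bound $\int_{\abs\xi>\rho}\abs{\widehat{\tilde V}}^2\le C\rho^{-2\alpha}$ coming from $\norm{\tilde V}_{H^\alpha}\le 2M$, and then balancing the exponential factor against the polynomial decay by choosing $\sigma\sim\log(1/m)$ produces a bound of $\log$ type for $\norm{\tilde V}_{L^2}$; interpolating with the $H^\alpha$ control and using Sobolev embedding upgrades this to the stated $L^\infty(\mathcal I_r^*)$ estimate with some $\mu>0$.

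\emph{Recovery of $p$ and the main difficulty.} With $\tilde V$ now controlled, I return to the $O(1)$ part of the identity, subtract the (now estimated) first order contribution, and extract a weighted light-ray transform of $\tilde p$, which the same frequency-splitting scheme inverts. The main obstacle, and the reason $p$ obeys only a $\log$-$\log$ bound, is exactly this two-step structure: the weight $a_1\overline{a_2}$ and the subtracted term both depend on $V_1,V_2$, so the \emph{effective data error} for the $p$-stage is no longer $m$ but the $V$-error $\eta(m)\sim\abs{\log m}^{-\mu}$ produced above. Running the same logarithmic inversion at noise level $\eta(m)$ yields a modulus of continuity $\abs{\log\eta(m)}^{-\mu'}\sim\para{\log\abs{\log m}}^{-\mu'}$, which is precisely the claimed estimate. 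Carrying out the remainder estimates for genuinely time-dependent coefficients, keeping every constant explicit in $\sigma$ so the final optimization is legitimate, and handling the restriction of the light-ray transform to the unknown region $\mathcal I_r^*$ rather than all of $Q$ are the technically delicate points I expect to absorb most of the work.
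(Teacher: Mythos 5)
Your proposal follows the same overall route as the paper: reduce $\mathcal{L}_{V,p}$ to an electro\--magnetic wave operator $\mathcal{H}_{A,q}$ with $A=\tfrac i2 V$, build geometric optics solutions $\varphi(x+t\omega)b(x,t)e^{i\sigma(x\cdot\omega+t)}+r$ with $b$ carrying the exponential of a light\--ray integral of $\omega\cdot A$, insert them into an Alessandrini identity, read off a (weighted, in fact exponentiated) light\--ray transform of $\omega\cdot A$ at order $\sigma$, remove the gauge ambiguity with $\dive_x(V_1-V_2)=0$, and then run the two\--stage scheme in which the $\log$ estimate for $V$ becomes the effective noise level for $p$, producing the $\log$\--$\log$ bound. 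That architecture, including the explanation of why $p$ only gets $\log$\--$\log$, matches the paper.

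There is, however, one genuine gap at the heart of the frequency\--space step. The light\--ray transform of a \emph{time\--dependent} function, Fourier transformed in the variable $y$ parametrizing the lines, yields $\widehat{f}(\xi,\omega\cdot\xi)$; as $\omega$ ranges over $\s^{n-1}$ this only reaches the space\--time frequencies in the cone $E=\{(\xi,\tau):\abs{\tau}\le\abs{\xi}\}$ (the paper works with $\abs{\tau}\le\tfrac12\abs{\xi}$). Your sentence ``Fourier transforming in the transverse variables recovers $\widehat{\tilde V}(\xi)$ for $\abs{\xi}\lesssim\sigma$'' treats the problem as if it were time\--independent: the high temporal frequencies $\abs{\tau}>\abs{\xi}$ are simply not controlled by the data, so the low/high frequency splitting over a ball cannot be performed as written. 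The paper bridges this with a quantitative analytic continuation argument (Lemma 3.7): since $V_1-V_2$ is supported in $\mathcal{I}_r^*$, its Fourier transform is entire, and a stability estimate for analytic continuation propagates the bound from $E\cap B(0,1)$ to a ball $B(0,\alpha)$ at the cost of a factor $e^{\alpha(1-\gamma)}$ and a power $\gamma\in(0,1)$. This is also where the exponential loss that forces logarithmic stability actually enters: in the paper the geometric optics solutions and the resulting light\--ray estimates are only \emph{polynomial} in $\sigma$ (of the form $\sigma^{\delta}m+\sigma^{-\beta}$), so your attribution of the factor $e^{C\sigma}$ to growth of the oscillating solutions is incorrect for this construction, and the subsequent optimization ($\sigma$ chosen exponentially large in $\alpha$, then $\alpha\sim\log\abs{\log m}$ or $\alpha\sim\abs{\log m}$) is organized differently than you describe. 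A secondary, smaller omission: what the identity actually controls is $\int\varphi^2\bigl(e^{-i\int_0^T\omega\cdot A(y-s\omega,s)\,ds}-1\bigr)\,dy$, i.e.\ a nonlinear function of the light\--ray transform, which the paper linearizes via $\abs{X}\le e^{M}\abs{e^{X}-1}$ before taking Fourier transforms; you flag the weight as delicate but do not resolve it.
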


The above theorem states the simultaneous  determination of the velocity field  $V$ and the electric potential $p$ from the knowledge of the DN map $\Lambda_{V,p}$  in $\mathcal{I}_r^*\subset Q$, provided they are known outside of this subset. To improve the previous statement, we need to know more about $u$,  solution to (\ref{1.1}). In the second  part of the paper, $(u_0,u_1)$ are as usual \textit{frozen} to zero and our observations are made by  the linear response operator $\mathcal{R}_{V,p}:\mathscr{H}^{1}_{0}(\Sigma)\longrightarrow  \mathcal{K}:=L^{2}(\Sigma)\times H^1(\Omega)\times L^2(\Omega)$ given by $\mathcal{R}_{V,p}(f)= (\p_{\nu}u,u(\cdot, T),\p_tu(\cdot,T)).$

Note that even by adding the final data of the solution $u$ of the wave equation (\ref{1.1}), it is impossible to determine the unknown terms everywhere in $Q$ because we still have $(u_0,u_1)=(0,0)$.  In the sequel, we denote $\mathcal{I}_r^\sharp= Q\cap \mathcal{F}_r$.
\begin{theorem}\label{Theorem 1.2}
For $T\!\!>\!2\,{Diam}\, (\Omega)$, There exist  $C,\mu>0$ such that 
if  $\|\mathcal{R}_{V_{1},p_{1}}-\mathcal{R}_{V_{2},p_{2}}\|\leq\! m\in(0,1),$ then 
$$\|V_{1}-V_{2}\|_{L^{\infty}(\mathcal{I}_r^\sharp)}\leq C \,|\log\|\mathcal{R}_{V_{2},p_{2}}-\mathcal{R}_{V_{1},p_{1}}\||^{-\mu},$$
 for all  $(V_{j}, p_j)\in\!\mathcal{S}(M)$ satisfying $\|V_j\|_{H^{\alpha}(Q)}+\|p_j\|_{H^{\alpha}(Q)}\leq M,\, j=1,2.$ for some  $\alpha\!\!>\!\!n/2+1$ while assuming  that  $(V_1,p_1)\!\!=\!\!(V_2,p_2)$ in $\overline{Q}_{r}\setminus \mathcal{I}_r^\sharp$ and $\mbox{div}_{x} (V_{1})=\mbox{div}_{x} (V_{2})$.  Moreover, there exist $C',\mu'>0$ such that  
 $$\|p_2-p_1\|_{L^{\infty}(\mathcal{I}_r^\sharp)}\leq C' \Big( \log |\log \|\mathcal{R}_{V_{2},p_{2}}-\mathcal{R}_{V_{1},p_{1}}\||\Big)^{-\mu'}, $$
holds true. Here $C$ and $C'$ depend only on $\Omega$ and $M$.
\end{theorem}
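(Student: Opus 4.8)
The plan is to follow, almost verbatim, the architecture of the proof of Theorem \ref{Theorem 1.1}, the single decisive change being how the traces carried at the final time $t=T$ are handled. As announced in the abstract, I would begin with the gauge reduction that recasts the non self-adjoint operator $\mathcal{L}_{V,p}=\p_t^2-\Delta+V\cdot\nabla+p$ as an electro-magnetic wave operator, absorbing the drift $V\cdot\nabla$ into a magnetic-type potential. This converts the simultaneous recovery of $(V,p)$ into the recovery of a magnetic and an electric potential, and the imposed constraint $\text{div}_x V_1=\text{div}_x V_2$ is precisely the gauge-fixing that renders the magnetic part uniquely determined by its light-ray transform rather than only modulo a gradient.

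Let $u_1$ solve $\mathcal{L}_{V_1,p_1}u_1=0$ with $(u_0,u_1)=(0,0)$ and Dirichlet data $f$, and let $u_2$ solve the backward adjoint problem for $\mathcal{L}_{V_2,p_2}$. Integrating by parts produces the orthogonality identity
\begin{equation}\label{orth12}
\int_Q\para{(V_1-V_2)\cdot\nabla u_1}\,\overline{u_2}\,dx\,dt+\int_Q(p_1-p_2)\,u_1\,\overline{u_2}\,dx\,dt=B_\Sigma+B_T,
\end{equation}
where $B_\Sigma$ gathers the lateral traces governed by $\p_\nu u$ and $B_T$ gathers the traces at $t=T$. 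Here lies the whole gain over Theorem \ref{Theorem 1.1}: since $\mathcal{R}_{V,p}$ now records $u(\cdot,T)$ and $\p_t u(\cdot,T)$, the term $B_T$ is bounded by $\norm{\mathcal{R}_{V_1,p_1}-\mathcal{R}_{V_2,p_2}}$ instead of being forced to vanish. I am therefore free to choose the adjoint solution $u_2$ \emph{without} requiring it to be supported away from $t=T$; only $u_1$ must still vanish near $t=0$ on account of the zero initial data. Consequently the product $u_1\overline{u_2}$ is confined to the forward light cone alone, which removes the backward cone $\mathcal{B}_r$ and enlarges the admissible region from $\mathcal{I}_r^*$ to $\mathcal{I}_r^\sharp=Q\cap\mathcal{F}_r$.

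The construction of the geometric optics solutions is the technical heart. For $\omega\in\s^{n-1}$ and a large parameter $\lambda$, I would build $u_j=e^{i\lambda(x\cdot\omega\mp t)}a_j(x,t)+r_{j,\lambda}$, where the amplitudes $a_j$ solve first-order transport equations along the light rays carrying the magnetic part of $V_j$, and the remainders $r_{j,\lambda}$ are controlled by the energy estimates for $\mathcal{L}_{V_j,p_j}$; the a priori bound $\norm{V_j}_{H^\alpha}+\norm{p_j}_{H^\alpha}\le M$ with $\alpha>n/2+1$ secures the $\mathcal{C}^1$ control of amplitudes via Sobolev embedding. Inserting these into \eqref{orth12} and letting $\lambda\to\infty$, the leading $O(\lambda)$ contribution isolates the light-ray transform of $(V_1-V_2)\cdot(\omega,\mp1)$ over the lines lying in $\mathcal{F}_r$; varying $\omega$ and invoking the gauge condition, a stable inversion of the ray transform combined with a frequency-splitting estimate (Fourier data up to $\lambda$ against the high-frequency tail bounded by $M$) yields the $\log$-type control of $\norm{V_1-V_2}_{L^\infty(\mathcal{I}_r^\sharp)}$. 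Feeding the recovered $V$ back and reading the next order in $\lambda$ governs $\int(p_1-p_2)u_1\overline{u_2}$, whose inversion is now polluted by the error already committed on $V$, producing the weaker $\log$-$\log$ estimate for $p$.

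The main obstacle I anticipate is making the bound on $B_T$ quantitative and uniform in $\lambda$: one must estimate the traces of the geometric optics solutions and their time derivatives at $t=T$ in the $H^1(\Omega)\times L^2(\Omega)$ norms appearing in $\mathcal{K}$, and track their growth in $\lambda$ so that the final-time contribution does not swamp the leading light-ray term. Getting this trade-off right—optimizing the frequency $\lambda$ against the measurement size $m$—is exactly what fixes the exponents $\mu,\mu'$ and legitimizes the enlargement of the stability region to the full forward cone $\mathcal{I}_r^\sharp$.
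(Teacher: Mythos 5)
Your proposal follows essentially the same route as the paper: reduce to the magnetic wave operator via the substitution $A=\tfrac{i}{2}V$, $q=p+\tfrac14 V^2-\tfrac12\,\mbox{div}_x V$ (Lemma \ref{Lemma 2.1}), use the orthogonality identity in which the final-time traces are now controlled by $\|\mathcal{R}_{V_1,p_1}-\mathcal{R}_{V_2,p_2}\|$ rather than forced to vanish, drop the condition that the geometric optics profile be supported away from $t=T$ so that only the forward cone constrains the recovery region, and then run the light-ray transform, analytic continuation and interpolation machinery to get the $\log$ estimate for $V$ and the $\log$-$\log$ estimate for $p$. This matches the paper's Section \ref{Section 4} (Lemmas \ref{Lemma4.1} and \ref{Lemma4.2}) in all essential respects, including the need to track the $\sigma$-growth of the traces $\para{u_{|\Sigma},u(\cdot,T),\p_t u(\cdot,T)}$ of the geometric optics solutions.
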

In the two previous cases, we notice that  it is impossible to identify the  unknown terms over the entire cylinder $Q$ and to do this we need to consider the data of the second case and vary the initial conditions. 
We introduce this space $E= \mathscr{H}^1_0(\Sigma)\!\times\! H^1(\Omega)\!\times \!L^2(\Omega).$ In this last case the  measures are modeled  by 
$
\Gamma_{V,p}:{E}\longrightarrow \mathcal{K}$
which is defined by  
$\Gamma_{V,p}((f,u_{0},u_{1}))= (\p_{\nu}u,u(\cdot,T),\p_{t}u(\cdot,T))$.

Thus, by these extra information about $u$ of the hyperbolic equation (\ref{1.1}), we can see that  is possible to identify $V$ and $p$ everywhere in the cylinder $Q$.
\begin{theorem}\label{Theorem 1.3}
 There exist  $C,\mu>0$ such that 
if  $\|\Gamma_{V_{1},p_{1}}-\Gamma_{V_{2},p_{2}}\|\leq\! m\in(0,1),$ then we have 
$$\|V_{1}-V_{2}\|_{L^{\infty}(Q)}\leq C \,|\log\|\Gamma_{V_{2},p_{2}}-\Gamma_{V_{1},p_{1}}\||^{-\mu},$$
 for all $(V_{j}, p_j)\in\!\mathcal{C}^3(\overline{Q})\times \mathcal{C}^1(\overline{Q})$ satisfying $\|V_j\|_{H^{\alpha}(Q)}\!+\!\|p_j\|_{H^{\alpha}(Q)}\leq M,\, j=1,2.$ for some  $\alpha\!\!>\!\!n/2+1$ while assuming  that  $\mbox{div}_{x} (V_{1})=\mbox{div}_{x} (V_{2})$.  Moreover, there exist $C',\mu'>0$ such that   
 $$\|p_2-p_1\|_{L^{\infty}(Q)}\leq C' \Big( \log |\log \|\Gamma_{V_{2},p_{2}}-\Gamma_{V_{1},p_{1}}\||^{\mu'}\Big)^{-1}, $$
holds true. Here $C$ and $C'$ depend only on $\Omega$ and $M$.
\end{theorem}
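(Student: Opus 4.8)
\section*{Proof proposal for Theorem \ref{Theorem 1.3}}

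The plan is to reduce the problem to a classical inverse problem for an electromagnetic wave equation and then to exploit the full freedom in the Cauchy data $(u_0,u_1)$ to reach the \emph{entire} cylinder $Q$. Writing $V=V_1-V_2$ and $p=p_1-p_2$, the starting point is the standard integral identity obtained by pairing a solution $w_1$ of $\mathcal{L}_{V_1,p_1}w_1=0$ against a solution $w_2$ of the adjoint equation $\mathcal{L}_{V_2,p_2}^{\ast}w_2=0$ and integrating by parts over $Q$. Since the formal adjoint is $\mathcal{L}_{V,p}^{\ast}=\partial_t^2-\Delta-V\cdot\nabla-\mathrm{div}_x(V)+p$, the divergence hypothesis $\mathrm{div}_x(V_1)=\mathrm{div}_x(V_2)$ cancels the zeroth-order contribution of the first-order coefficient and yields
$$\int_Q w_1\,\overline{\bigl[-V\cdot\nabla+p\bigr]w_2}\,dx\,dt=\mathcal{T}(w_1,w_2),$$
where $\mathcal{T}$ collects the lateral, initial and final traces of $w_1,w_2$. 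The point of using $\Gamma_{V,p}$, which records $\partial_\nu u$ on $\Sigma$ \emph{together with} $\bigl(u(\cdot,T),\partial_t u(\cdot,T)\bigr)$, is that every term in $\mathcal{T}$ is then majorized by $\|\Gamma_{V_1,p_1}-\Gamma_{V_2,p_2}\|$ times a Sobolev norm of the probing data.

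Next I would insert geometric optics solutions $w_j=e^{i\tau\varphi}(a_j+r_j)$, $j=1,2$, with a light-like linear phase $\varphi(x,t)=x\cdot\omega-t$, $|\omega|=1$, where the amplitudes $a_j$ solve the transport equations dictated by $\mathcal{L}_{V_j,p_j}$ (respectively its adjoint) and the remainders obey $\|r_j\|_{H^1}=O(\tau^{-1})$; the threshold $\alpha>n/2+1$ is precisely what secures these estimates through Sobolev embedding. The decisive gain of Theorem \ref{Theorem 1.3} over Theorems \ref{Theorem 1.1}--\ref{Theorem 1.2} is that $(u_0,u_1)$ are now free, so the $a_j$ may be prescribed by a Cauchy problem started at $t=0$ instead of being forced to vanish there; consequently the light rays $s\mapsto(x_0+s\omega,\,s)$ carrying the optical information sweep out all of $Q$ rather than merely the forward cone $\mathcal{F}_r$, which is what upgrades the conclusion from $\mathcal{I}_r^{\sharp}$ to the whole of $Q$. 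After dividing by $\tau$ and letting $\tau\to\infty$, the phases cancel in the product $w_1\overline{w_2}$ and the leading-order identity becomes
$$\int_Q (V\cdot\omega)\,a_1\overline{a_2}\,dx\,dt=O\bigl(\tau^{-1}\bigr)+\tau^{-1}\mathcal{T}(w_1,w_2).$$
Prescribing the amplitudes, via their transport equations together with a free transverse modulation $e^{i\xi\cdot x}$, so that $a_1\overline{a_2}$ concentrates along a single ray, this extracts the (partial Fourier transform of the) light-ray transform of $V\cdot\omega$, up to an error controlled by $\|\Gamma_{V_1,p_1}-\Gamma_{V_2,p_2}\|$.

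The recovery of $V$ then follows from a stability analysis of the light-ray transform: combining the above frequency-localized estimate over a family of directions $\omega$, using the gauge normalization supplied by the divergence condition (which makes the transform injective on the admissible class), and interpolating against the a priori bound $\|V_j\|_{H^{\alpha}}\le M$ through a Fourier splitting into low and high frequencies, produces the $\log$-type estimate for $\|V\|_{L^{\infty}(Q)}$. To recover $p$ I would return to the integral identity and retain the next order in $\tau$: once $V$ is known up to the (small but nonzero) error just established, subtracting its contribution isolates the light-ray transform of $p$ and the same inversion scheme applies. The additional logarithm in the $p$-estimate is exactly the price of propagating the $\log$-error already committed on $V$ through this second inversion, which is the source of the $\log$-$\log$ rate.

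The step I expect to be the main obstacle is the simultaneous construction of the two geometric optics families with remainders that are both small in the energy norm and compatible with the prescribed Cauchy data at $t=0$, while guaranteeing that every resulting lateral, initial and final term in $\mathcal{T}$ is genuinely dominated by the operator norm $\|\Gamma_{V_1,p_1}-\Gamma_{V_2,p_2}\|$ and not by some uncontrolled trace. Closely tied to this is the quantitative inversion of the light-ray transform with an explicit stability modulus; the clean disappearance of the gauge ambiguity through $\mathrm{div}_x(V_1)=\mathrm{div}_x(V_2)$ is what renders this inversion well posed, and carrying the induced error carefully from the determination of $V$ into that of $p$ is the delicate bookkeeping behind the double-logarithmic estimate.
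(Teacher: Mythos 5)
Your overall route is the same as the paper's: reduce to the magnetic wave operator $\mathcal{H}_{A,q}$ with $A=\tfrac{i}{2}V$ and $q=p+\tfrac14 V^2-\tfrac12\mathrm{div}_x V$ (Lemma \ref{Lemma 2.1}), insert geometric optics solutions with a light-like phase into an orthogonality identity whose lateral, initial and final traces are controlled by the full response operator, read off the light-ray transforms of $\omega\cdot V$ and of $p$, and observe that the freedom in $(u_0,u_1)$ removes all support restrictions on the concentrating profile $\varphi_h$ of (\ref{3.20}), which is exactly what upgrades the recovery region from $\mathcal{I}_r^{\sharp}$ to all of $Q$. Your reading of the gauge role of $\mathrm{div}_x V_1=\mathrm{div}_x V_2$ and of why the error committed on $V$ forces the double logarithm on $p$ also matches the paper.

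The one genuine gap is in the inversion of the light-ray transform. The constraint $\omega\cdot\xi=\tau$ with $|\omega|=1$ means the ray-transform estimates only control $\widehat{V}(\xi,\tau)$ (more precisely the curl components $\widehat{\beta}_{j,k}=\xi_j\widehat a_k-\xi_k\widehat a_j$) on the space-like region $\{|\tau|\le|\xi|\}$; no choice of directions $\omega$ reaches $|\tau|>|\xi|$. A low/high frequency splitting against the a priori $H^{\alpha}$ bound is therefore not sufficient: the low-frequency ball $B(0,\alpha)$ contains time-like frequencies on which you have no direct estimate. The paper fills this in with a quantitative analytic continuation lemma for the entire function $\widehat{\beta}_{j,k}$ (Lemma \ref{Lemma 3.7}, applied with $\mathcal{O}=\mathring{E}\cap B(0,1)$, giving $\|G\|_{L^{\infty}(B(0,1))}\leq N M^{1-\gamma}\|G\|_{L^{\infty}(\mathcal{O})}^{\gamma}$), and it is the exponential loss $e^{\alpha(1-\gamma)}$ in that continuation, optimized against the tail $\alpha^{-2/\gamma}$, that actually produces the logarithmic modulus --- the Sobolev interpolation only converts the resulting $H^{-1}$ control into $L^{\infty}$ control at the very end. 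The same continuation must be run a second time for $\widehat q$, and feeding the already-logarithmic bound on $\|A\|_{L^{\infty}}$ through it is what yields the $\log$-$\log$ rate. Without this ingredient your scheme stalls at a frequency-restricted estimate and the claimed moduli of continuity do not follow; you flag the ``quantitative inversion of the light-ray transform'' as the main obstacle, but the mechanism you propose for it is not the one that closes the argument.
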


Note that the considered response operators are linear and continuous. We point out that  to be able to prove these results, we shall first reduce the problem under consideration to an equivalent  one that we are familiar with and that is easier to deal with. Namely, we need first to deal with a preliminary  problem  for another hyperbolic operator. More details about this issue will be given in the next section. 

The rest of this text is organized as follows. In Section \ref{Section 2}, we reduce the problem under investigation to an equivalent problem for a wave equation with both  magnetic and electric potentials. In Section \ref{Section 3}, we deal with the auxiliary inverse problem and we give the proof of Theorem \ref{Theorem 1.1}. In section \ref{Section 4}, we show that $V$ and $p$ can be stably recovered in a bigger subdomain by improving the  data set. In Section \ref{Section 5} we establish  Theorem \ref{Theorem 1.3}. 
\section{Reduction of the problem}\label{Section 2}
This section is dedicated to reduce the problem under investigation. 
The idea behind  proving the stability in the  identification of  $V$ and $p$ in (\ref{1.1}) is essentially based on treating a classic inverse problem associated with this  equation
$$ \mathcal{H}_{A,q}=\p_{t}^{2}-\Delta_{A}+q(x,t),$$
where $A=(a_1,...,a_n)\in {W}^{3,\infty}(Q,\mathbb{C}^{n})$ is  an imaginary  complex magnetic potential (pure), $q\in W^{1,\infty}(Q,\R)$ is the electric potential that is assumed to be a real bounded function  and $\Delta_{A}$ here is the magnetic Laplacien that one defines it as:
 $$\Delta_{A}=\sum_{j=1}^{n}(\p_{j}+i A_{j})^{2}=\Delta +2i A \cdot \nabla+i\,\mbox{div}_x\,A-A\cdot A.$$
 We consider the following equation 
\begin{equation}\label{2.2}
\left\{
  \begin{array}{ll}
   \mathcal{H}_{A,q}u=0 & \mbox{in} \,Q, \\
   \\
   u(\cdot,0)=u_0,\,\,\p_t u(\cdot,0)=u_1 & \mbox{in}\, \Omega,  \\
   \\
    u=f & \mbox{on} \,\Sigma,
  \end{array}
\right.
\end{equation}
where $(u_0,u_1)$ are the initial conditions   and $f\in\mathscr{H}^{1}_{0}(\Sigma)$ is the non homogeneous Dirichlet data that is used as a stimulation term. 

The inverse problem associated with the equation (\ref{1.1}) and consisting on recovering $V$ and $p$ from the knowledge of the different data sets prescribed before,  may equivalently be reformulated to the inverse problem of recovering the magnetic potential $A$ and the electric potential $q$ appearing in (\ref{2.2}) from equivalent measurements. This is  actually feasible if one writes $A$ and $q$ in precise forms  in such a way $\mathcal{H}_{A,q}=\mathcal{L}_{V,p}$  and the corresponding response operators coincide. 

\noindent The forward problem related to (\ref{2.2}) is well posed (see \cite{[LM]}), therefore we may introduce the DN map $\widetilde{\Lambda}_{A,q}:\mathscr{H}^{1}_{0}(\Sigma) \longrightarrow L^{2}(\Sigma)$ associated to (\ref{2.2}) with $(u_0,u_1)=(0,0)$ defined by  
 $
f\longmapsto (\p_{\nu}+iA\cdot \nu)u.$

\noindent At an initial stage, our goal is to establish Theorem \ref{Theorem 1.1} that claims the stable recovery of $V$ and $p$ from $\Lambda_{V,p}$ which equivalently amounts to  showing that $A$ and $q$ in (\ref{2.2}) can be determined from $\widetilde{\Lambda}_{A,q}$.  Next, to be able to prove Theorem \ref{Theorem 1.2}, we just need to stably determine $A$  and $q$ in a bigger subset from measures \textit{ enclosed } in the equivalent operator  $\widetilde{\mathcal{R}}_{A,q}:\mathscr{H}^{1}_{0}(\Sigma)\longrightarrow \mathcal{K}$  associated to (\ref{2.2}) with $(u_0,u_1)=(0,0)$ that is defined by $
f\longmapsto ((\p_{\nu}+iA\cdot\nu)u,u(\cdot, T),\p_tu(\cdot,T)).$ 
 In the final case our objective is to prove Theorem \ref{Theorem 1.3}. To do this, we first show that $A$ and $q$ can be identified over the entire $Q$ if we just vary the initial data. To this end, we introduce the operator $\widetilde{\Gamma}_{A,q}:{E}\longrightarrow \mathcal{K}$ that is given by $$\widetilde{\Gamma}_{A,q}((f,u_{0},u_{1}))= (\p_{\nu}u+i A\cdot\nu u,u(\cdot,T),\p_{t}u(\cdot,T)).
$$
In the sequel of the text, we use these notations
  $$
\widetilde{{\Gamma}}^{1}_{A,q}(f,u_0,u_1):=(\p_{\nu}+i A\cdot\nu)u,\,\,\,\,\,\widetilde{{\Gamma}}^{2}_{A,q}(f,u_0,u_1):=u(\cdot,T),\,\,\,\,\,\,\widetilde{{\Gamma}}_{A,q}^{3}(f,u_0,u_1):=\p_{t}u(\cdot,T).
$$ 

\begin{Lemm}\label{Lemma 2.1} 
 Let $V_{j}\in W ^{3,\infty}(Q,\R^n)$ and $p_j\in L^{\infty}(Q,\R)$, $j=1,\,2$ be  given  such that  $V_1 \cdot \nu= V_2\cdot \nu$ on $\Gamma$ \color{black}. We set 
 \begin{equation}\label{2.3}
  A_{j}=\frac{i}{2}V_{j},\quad 
 \mbox{and} \quad q_{j}=p_{j}+\frac{1}{4}{V_{j}^{2}}-\frac{1}{2}\,\mbox{div$_{x}$}\, (V_{j}).
 \end{equation}
 Then, we have 
$
 \mathcal{H}_{A_{j},q_{j}}=\mathcal{L}_{V_{j}}$ and $\mathcal{H}^{*}_{A_{j},q_{j}}=\mathcal{L}^{*}_{V_{j}}$  for $ j=1,\,2.
 $  
 Moreover,  the following identities hold
 
\begin{equation}\label{2.4}
\|\widetilde{\Gamma}_{A_{1},q_{1}}-\widetilde{\Gamma}_{A_{2},q_{2}}\|=\|\Gamma_{V_{1},p_1}-\Gamma_{V_{2},p_2}\|.
\end{equation}
 Respectively, we have 
  \begin{equation}\label{2.5}
 \|\widetilde{\mathcal{R}}_{A_1,q_1}-\widetilde{\mathcal{R}}_{A_2,q_2}\|=\|\mathcal{R}_{V_1,p_1}-\mathcal{R}_{V_2,p_2}\|\,\,\, \mbox{and}\,\,\,\,\,\|\widetilde{\Lambda}_{A_{1},q_{1}}-\widetilde{\Lambda}_{A_{2},q_{2}}\|=\|\Lambda_{V_{1},p_1}-\Lambda_{V_{2},p_2}\|.\end{equation}

 \noindent Here $\|\cdot\|$ stands for the norm in $\mathscr{L}(E,\mathcal{K})$, (resp., $\mathscr{L}(\mathscr{H}^1_0(\Sigma),\mathcal{K})$ and $\mathscr{L}(\mathscr{H}^1_0(\Sigma), L^2(\Sigma))$).
\end{Lemm}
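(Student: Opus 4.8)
The plan is to prove the lemma in two stages. First I would establish the pointwise operator identity $\mathcal{H}_{A_j,q_j}=\mathcal{L}_{V_j,p_j}$ by a direct algebraic computation; this is a gauge-type change of the first-order term into the magnetic framework. Second, I would use this identity together with the normal-trace hypothesis $V_1\cdot\nu=V_2\cdot\nu$ on $\Gamma$ to match the three pairs of response operators and conclude the norm identities (\ref{2.4}) and (\ref{2.5}).

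For the first stage I would expand the magnetic Laplacian using the formula already recorded in the text,
$$\Delta_A=\Delta+2iA\cdot\nabla+i\,\mbox{div}_x\,A-A\cdot A,$$
so that
$$\mathcal{H}_{A,q}=\p_t^2-\Delta-2iA\cdot\nabla-i\,\mbox{div}_x\,A+A\cdot A+q.$$
Substituting $A_j=\tfrac{i}{2}V_j$ converts the first-order term into $-2i\cdot\tfrac{i}{2}V_j\cdot\nabla=V_j\cdot\nabla$, the divergence term into $\tfrac{1}{2}\mbox{div}_x\,V_j$, and the zeroth-order term $A_j\cdot A_j$ into $-\tfrac14 V_j^2$. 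Collecting these and inserting the definition $q_j=p_j+\tfrac14 V_j^2-\tfrac12\mbox{div}_x\,V_j$ makes every spurious term cancel, leaving $\mathcal{H}_{A_j,q_j}=\p_t^2-\Delta+V_j\cdot\nabla+p_j=\mathcal{L}_{V_j,p_j}$. Since two differential operators that coincide necessarily have equal formal adjoints, the identity $\mathcal{H}^*_{A_j,q_j}=\mathcal{L}^*_{V_j,p_j}$ then follows immediately with no further computation.

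The second stage rests on the observation that, because the two operators agree, for any admissible data $(f,u_0,u_1)$ the problems (\ref{1.1}) and (\ref{2.2}) share the \emph{same} solution $u$. Hence the components $u(\cdot,T)$ and $\p_t u(\cdot,T)$ appearing in $\Gamma_{V,p}$ and $\widetilde{\Gamma}_{A,q}$ (and in $\mathcal{R}$, $\widetilde{\mathcal{R}}$) are literally identical, and only the conormal derivative differs. Using $u=f$ on $\Sigma$ and $iA_j\cdot\nu=-\tfrac12 V_j\cdot\nu$, the Neumann output of the magnetic operator is
$$\widetilde{\Lambda}_{A_j,q_j}(f)=\p_\nu u-\tfrac12(V_j\cdot\nu)f=\Lambda_{V_j,p_j}(f)-\tfrac12(V_j\cdot\nu)f.$$
Forming the difference of the two maps, the boundary contribution becomes $-\tfrac12(V_1\cdot\nu-V_2\cdot\nu)f$, which vanishes precisely under the hypothesis $V_1\cdot\nu=V_2\cdot\nu$. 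Thus $\widetilde{\Lambda}_{A_1,q_1}-\widetilde{\Lambda}_{A_2,q_2}$ and $\Lambda_{V_1,p_1}-\Lambda_{V_2,p_2}$ coincide as operators, and the same componentwise cancellation yields the identities for $\widetilde{\mathcal{R}}$ and $\widetilde{\Gamma}$. Taking operator norms in the respective spaces $\mathscr{L}(\mathscr{H}^1_0(\Sigma),L^2(\Sigma))$, $\mathscr{L}(\mathscr{H}^1_0(\Sigma),\mathcal{K})$ and $\mathscr{L}(E,\mathcal{K})$ gives (\ref{2.5}) and (\ref{2.4}).

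I do not anticipate a genuine obstacle: the result is essentially a gauge transformation, and the only delicate point is the bookkeeping of the boundary term $iA\cdot\nu\,u|_\Sigma$, whose cancellation is the sole reason the normal-trace condition $V_1\cdot\nu=V_2\cdot\nu$ is assumed. I would also verify that the substitution respects the stated regularity (with $V_j\in W^{3,\infty}(Q)$ one indeed has $q_j\in L^\infty(Q,\R)$, and $A_j$ is a purely imaginary potential as required), but this is routine.
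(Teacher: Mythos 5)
Your proposal is correct and follows essentially the same route as the paper: the explicit expansion of $\Delta_{A}$ under $A_j=\tfrac{i}{2}V_j$ is exactly the computation the paper leaves to the reader, and the norm identities rest on the same cancellation of the extra conormal term $iA_j\cdot\nu\,f=-\tfrac12(V_j\cdot\nu)f$ under the hypothesis $V_1\cdot\nu=V_2\cdot\nu$. The only cosmetic difference is that you read the relation $\widetilde{\Gamma}^1_{A_j,q_j}(f,u_0,u_1)=\Gamma^1_{V_j,p_j}(f,u_0,u_1)-\tfrac12(V_j\cdot\nu)f$ directly off the traces, whereas the paper derives its weak form by pairing against adjoint solutions; both amount to the same bookkeeping.
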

 \begin{proof}
 From (\ref{2.3}), one can  check that $
 \mathcal{H}_{A_{j},q_{j}}=\mathcal{L}_{V_{j}}$ and $\mathcal{H}^{*}_{A_{j},q_{j}}=\mathcal{L}^{*}_{V_{j}}$  for $ j=1,\,2$. In order to show (\ref{2.4}), let us  denote by  $w_{j}$  the solution of this equation
\begin{equation}\label{2.6}
\left\{
  \begin{array}{ll}
  \mathcal{L}_{V_{j},p_j}w_{j}=0 & \mbox{in} \,\,\,Q, \\
  \\
    w_{j}(\cdot,0)=u_0,\,\,\p_{t}w_{j}(\cdot,0)=u_1 & \mbox{in}\,\,\, \Omega,  \\
   \\
    w_{j}=f & \mbox{on} \,\,\,\Sigma,
  \end{array}
\right.
\end{equation} 
  and let  $v_{j}$ be the solution of this equation
  \begin{equation}\label{2.6*}
  \left\{
  \begin{array}{ll}
  \mathcal{L}^{*}_{V_{j},p_j}v_{j}=0 & \mbox{in} \,\,\,Q, \\
  \\
    v_{j}(\cdot, T)=u_2,\,\, \p_{t}v_{j}(\cdot,T)=u_3& \mbox{in}\,\,\, \Omega,  \\
   \\
    v_{j}=g & \mbox{on} \,\,\,\Sigma.
  \end{array}
\right.
\end{equation} 
\noindent Here $f,\,g\in H^1(\Sigma)$. On the other hand, for $j=1,2$,  we denote by  
$$
{{\Gamma}}^{1}_{V_j,p_j}(f,u_0,u_1):=\p_{\nu}w_j,\,\,\,\,\,{{\Gamma}}^{2}_{V_j,p_j}(f,u_0,u_1):=w_j(\cdot,T),\,\,\,\,\,\,{{\Gamma}}_{A_j,q_j}^{3}(f,u_0,u_1):=\p_{t}w_j(\cdot,T).
$$ 
\noindent We multiply (\ref{2.6}) with $\overline{v}_{j}$, then an integration by parts yields
\begin{multline}\label{2.7}
\int_{\Sigma}\Gamma^1_{V_{j},p_j}(f,u_0,u_1) \,\overline{g}\,d\sigma_x\,dt+\int_{\Omega}\Gamma^2_{V_j,p_j}(f,u_0,u_1) \,\overline{u}_3\,dx-\int_{\Omega}\Gamma_{V_j,p_j}^3(f,u_0,u_1) \,\overline{u}_2 \,dx\cr
=\int_{Q}\Big( \p_{t}^2\overline{v}_{j}\,w_j+\nabla w_{j}\cdot \nabla \overline{v}_{j}+V_{j}\cdot \nabla w_{j}\,\overline{v}_{j}+p_j w_j \overline{v}_j\Big) \,dx\,dt\cr
+\int_{\Omega} \Big( u_0\,\p_t \overline{v}_j(\cdot,0)-u_1\,\overline{v}_j(\cdot,0)\Big)\,dx.
 \end{multline}
 It is clear that   $w_{j}$  and $v_{j}$ with  $j=1,\,2$, are also solutions to 
 
 \begin{equation}\label{2.8}
\left\{
  \begin{array}{ll}
  \mathcal{H}_{A_{j},q_{j}}w_{j}=0 & \mbox{in} \,\,\,Q, \\
  \\
    w_{j}(\cdot,0)=u_0,\,\,\p_{t}w_{j}(\cdot,0)=u_1 & \mbox{in}\,\,\, \Omega,  \\
   \\
    w_{j}=f & \mbox{on} \,\,\,\Sigma,
  \end{array}
\right.; \quad\quad \left\{
  \begin{array}{ll}
   \mathcal{H}^{*}_{A_{j},q_{j}}v_{j} =0 & \mbox{in} \,\,\,Q, \\
  \\
    v_{j}(\cdot,T)=u_2,\,\,\p_{t}v_{j}(\cdot,T)=u_3 & \mbox{in}\,\,\, \Omega,  \\
   \\
    v_{j}=g & \mbox{on} \,\,\,\Sigma.
  \end{array}
\right.
\end{equation}

\noindent Then,  after multiplying the first equation in (\ref{2.8}) with $\overline{v}_{j}$ and by  integrating, we obtain
\begin{multline}\label{2.9}
\int_{\Sigma}\widetilde{\Gamma}^1_{A_{j},q_j}(f,u_0,u_1) \,\overline{g}\,d\sigma_x\,dt+\int_{\Omega}\widetilde{\Gamma}^2_{A_j,q_j}(f,u_0,u_1) \,\overline{u}_3\,dx-\int_{\Omega}\widetilde{\Gamma}_{A_j,q_j}^3(f,u_0,u_1) \,\overline{u}_2 \,dx\cr
=\int_{Q}\,\Big( \,\p_{t}^2\overline{v}_{j}\,w_j\,+\,\nabla w_{j}\cdot \nabla \overline{v}_{j}\,+\,V_{j}\cdot \nabla w_{j}\,\overline{v}_{j}\,+\,p_j\, w_j\, \overline{v}_j\Big) \,dx\,dt\cr
+\int_{\Omega}\!\Big(\! u_0\,\p_t \overline{v}_j(\cdot,0)-u_1\,\overline{v}_j(\cdot,0)\Big)dx -\frac{1}{2}\int_{\Sigma}V_j\cdot \nu\,w_j\,\overline{v}_jd\sigma_x dt.
 \end{multline}
 Here $d\sigma_x$ is the Euclidean surface measure on $\Gamma$. Therefore, in light of (\ref{2.7}) and (\ref{2.9}) one gets  
\begin{multline*}
\int_{\Sigma}\widetilde{\Gamma}^1_{A_{j},q_j}(f,u_0,u_1) \,\overline{g}\,d\sigma_x\,dt+\int_{\Omega}\widetilde{\Gamma}^2_{A_j,q_j}(f,u_0,u_1) \,\overline{u}_3\,dx-\int_{\Omega}\widetilde{\Gamma}_{A_j,q_j}^3(f,u_0,u_1) \,\overline{u}_2 \,dx\cr
 = \int_{\Sigma}\Gamma^1_{V_{j},p_j}(f,u_0,u_1) \,\overline{g}\,d\sigma_x\,dt+\int_{\Omega}\Gamma^2_{V_j,p_j}(f,u_0,u_1) \,\overline{u}_3\,dx\cr
  -\int_{\Omega}\Gamma_{V_j,p_j}^3(f,u_0,u_1) \,\overline{u}_2 \,dx  -\frac{1}{2}\int_{\Sigma}V_j\cdot \nu\,w_j\,\overline{v}_jd\sigma_x dt.
 \end{multline*}
Since $V_{1}\cdot\nu =V_{2}\cdot \nu$ on $\Gamma$, one gets
\begin{multline*}
\int_{\Sigma}(\widetilde{\Gamma}^1_{A_{1},q_1}-\widetilde{\Gamma}^1_{A_2,q_2})(f,u_0,u_1) \,\overline{g}\,d\sigma_x\,dt+\int_{\Omega}(\widetilde{\Gamma}^2_{A_2,q_2}-\widetilde{\Gamma}^2_{A_1,q_1})(f,u_0,u_1) \,\overline{u}_3\,dx\cr
-\int_{\Omega}(\widetilde{\Gamma}_{A_1,q_2}^3-\widetilde{\Gamma}^3_{A_1,q_1}(f,u_0,u_1) \,\overline{u}_2 \,dx = \int_{\Sigma}(\Gamma^1_{V_{2},p_2}-{\Gamma}^1_{V_1,p_1})(f,u_0,u_1) \,\overline{g}\,d\sigma_x\,dt\cr
+\int_{\Omega}(\Gamma^2_{V_1,p_1}-\Gamma^2_{V_2,p_2})(f,u_0,u_1)) \,\overline{u}_3\,dx -\int_{\Omega}(\Gamma_{V_1,p_1}^3-\Gamma_{V_2,p_2}^3)(f,u_0,u_1) \,\overline{u}_2 \,dx.
 \end{multline*}
Using the fact that  $\widetilde{\Gamma}^j_{A_j,q_j}(f,u_0,u_1)=\Gamma^j_{V_j,p_j}(f,u_0,u_1)$ for $j=2,\,3$, one can thus easily see that  $$\|\widetilde{\Gamma}_{A_1,q_1}-\widetilde{\Gamma}_{A_2,q_2}\|=\|\Gamma_{A_1,q_1}-\Gamma_{A_2,q_2}\|.$$
As a consequence , in the particular case where the initial conditions $u_0$ and $u_1$ are zero  we have  for $j=1,2$.
$$\mathcal{R}_{V_j,p_j}(f)=\Gamma_{V_j,p_j}(f,0,0)\quad \mbox{and}\quad \widetilde{\mathcal{R}}_{A_j,q_j}(f)=\widetilde{\Gamma}_{A_j,q_j}(f,0,0).$$
Thus, from what precedes, we conclude that 
$$\|\widetilde{\mathcal{R}}_{A_1,q_1}-\widetilde{\mathcal{R}}_{A_2,q_2}\|=\|\mathcal{R}_{V_1,p_1}-\mathcal{R}_{V_2,p_2}\|. $$
By a similar way, one can prove the last identity (for the convenience of the reader, the proof of this identity is given in  \cite{[new]}).
 \end{proof}

 It is well known that in the case where  $(u_0,u_1)=(0,0)$ the Dirichlet to Neumann map is invariant under a gauge transformation of the magnetic potential (see \cite{[S]}).  Therefore, the magnetic potential $A$ can not be uniquely determined from the knowledge of Neumann measurements.  However,  we could hope to reconstruct the \textit{divergence known} magnetic potential.

\noindent We move to state our preliminary result. But before this, let us introduce the set $\widetilde{\mathcal{S}}(M)$,for any  $M>0$.  
$$\widetilde{\mathcal{S}}(M):=\{(A,q)\in \mathcal{C}^{3}(Q,\mathbb{C}^{n})\times\mathcal{C}^{1}(Q,\R),\,\, \|A\|_{{W}^{3,\infty}(Q)}+\|q\|_{{W}^{1,\infty}(Q)}\leq M \}.$$
Then, this claim is true:
\begin{theorem}\label{Theorem 2.2}
We assume that $T\!\!>\!2\,Diam\, (\Omega)$. There exist  $C,\mu>0$ such that 
if  $\|\widetilde{\Lambda}_{A_{1},q_{1}}-\widetilde{\Lambda}_{A_{2},q_{2}}\|\leq m\in(0,1),$ then 
$$\|A_{1}-A_{2}\|_{L^{\infty}(\mathcal{I}_r^*)}\leq C \,|\log\|\widetilde{\Lambda}_{A_{2},q_{2}}-\widetilde{\Lambda}_{A_{1},q_{1}}\||^{-\mu},$$
 for any  $(A_{j}, q_j)\in\!\widetilde{\mathcal{S}}(M)$ satisfying $\|A_j\|_{H^{\alpha}(Q)}+\|q_j\|_{H^{\alpha}(Q)}\leq M,\, j=1,2,$ for some  $\alpha\!\!>\!\!n/2+1$ while assuming  that  $(A_1,q_1)\!\!=\!\!(A_2,q_2)$ in $\overline{Q}_{r}\setminus \mathcal{I}_r^*$ and $\mbox{div$_{x}$} (A_{1})=\mbox{div$_{x}$} (A_{2})$.  Moreover, there exist $C',\mu'>0$ such that   
 $$\|q_2-q_1\|_{L^{\infty}(\mathcal{I}_r^*)}\leq C' \Big( \log |\log \|\widetilde{\Lambda}_{A_{2},q_{2}}-\widetilde{\Lambda}_{A_{1},q_{1}}\||\Big)^{-\mu'}, $$
holds true. Here $C$ and $C'$ depend only on $\Omega$ and $M$.
\end{theorem}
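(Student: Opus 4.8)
The plan is to establish Theorem \ref{Theorem 2.2} by reducing the recovery of $A:=A_1-A_2$ and $q:=q_1-q_2$ to the inversion of a light-ray transform, in the spirit of the space-dependent treatment in \cite{[new]}, and then to quantify that inversion by a frequency-cutoff argument. By hypothesis $A$ and $q$ are supported in $\overline{\mathcal{I}_r^*}$ and $\dive_x A=0$. First I would derive a global integral identity: take $u_1$ solving $\mathcal{H}_{A_1,q_1}u_1=0$ with $(u_0,u_1)=(0,0)$ and lateral data $f$, and $u_2$ solving the adjoint equation $\mathcal{H}^{*}_{A_2,q_2}u_2=0$ with lateral data $g$ and zero Cauchy data at $t=T$. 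Writing $\mathcal{H}_{A_2,q_2}u_1=(\mathcal{H}_{A_2,q_2}-\mathcal{H}_{A_1,q_1})u_1$, expanding $\Delta_{A_1}-\Delta_{A_2}=2iA\cdot\nabla+i\,\dive_x A-(A_1+A_2)\cdot A$, using $\dive_x A=0$, and integrating by parts over $Q$ gives
\[
\int_Q\Big(2i\,A\cdot\nabla u_1-\big((A_1+A_2)\cdot A\big)u_1+q\,u_1\Big)\overline{u_2}\,dx\,dt=\big\langle(\widetilde{\Lambda}_{A_1,q_1}-\widetilde{\Lambda}_{A_2,q_2})f,\ g\big\rangle,
\]
whose right-hand side is bounded by $\norm{\widetilde{\Lambda}_{A_1,q_1}-\widetilde{\Lambda}_{A_2,q_2}}\,\norm{f}\,\norm{g}$ (up to the fixed sign conventions in the definition of $q$).

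Next I would construct geometric optics solutions concentrated along a light ray of spacetime direction $(\omega,1)$, $\omega\in\mathbb{S}^{n-1}$: solutions of the form $u_j(x,t)=e^{i\sigma(x\cdot\omega-t)}\,b_j(x,t)+r_{j,\sigma}(x,t)$ with a large parameter $\sigma$, where the principal amplitudes $b_j$ solve the first transport equation $(\p_t+\omega\cdot\nabla)b_j\pm i(A_j\cdot\omega)b_j=0$, so that $b_j$ carries the factor $\exp(\mp i\int A_j\cdot\omega\,ds)$ along the ray, and the remainders obey $\norm{r_{j,\sigma}}_{L^2(Q)}\le C\sigma^{-1}$ by the energy estimates for (\ref{2.2}) (see \cite{[LM]}). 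The assumption $T>2\,\mathrm{Diam}(\Omega)$ guarantees that the supporting rays enter and leave $\Omega$ through $\Sigma$ inside $(0,T)$, so the amplitudes can be cut off to vanish near $t=0$ and $t=T$ and the lateral data alone suffice; this is exactly what forces the reachable region to be $\mathcal{I}_r^*$.

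Inserting these solutions into the integral identity and letting $\sigma\to\infty$, the leading $\sigma^{1}$-term isolates $\int_Q(A\cdot\omega)\,\abs{b}^2\,dx\,dt$, which, after a suitable normalization of the amplitudes and upon varying $\omega\in\mathbb{S}^{n-1}$, reduces to the light-ray transform of $\omega\cdot A$. Since a line integral only sees $A$ up to a gauge, this transform determines the full spacetime differential $dA$ (the spatial curl of $A$ together with $\p_t A$); combining $dA$ with the constraint $\dive_x A=0$ and the vanishing of $A$ outside $\mathcal{I}_r^*$ recovers $A$ itself by an elliptic inversion, and yields the first, $\log$-type estimate for $\norm{A_1-A_2}_{L^\infty(\mathcal{I}_r^*)}$. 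Once $A$ is controlled I would subtract its now-known contribution from the identity, so that the remaining $\sigma^{0}$-term produces the light-ray transform of $q$, whose inversion recovers $q$ and gives the weaker $\log$-$\log$ type estimate for $\norm{q_1-q_2}_{L^\infty(\mathcal{I}_r^*)}$.

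To convert these qualitative recoveries into the stated rates I would pass to the Fourier transform of $A$ (resp.\ $q$) in the variables transverse to the rays, split frequencies at a threshold $\rho$, bound the low frequencies $\abs{\xi}<\rho$ by the DN-difference times a polynomial factor in $\sigma$ and $\rho$ (from the identity together with the remainder bound $C/\sigma$), and bound the high frequencies $\abs{\xi}\ge\rho$ by the a priori norm, producing a tail of order $\rho^{-(\alpha-n/2-1)}$; optimizing $\sigma$ and $\rho$ against $m=\norm{\widetilde{\Lambda}_{A_1,q_1}-\widetilde{\Lambda}_{A_2,q_2}}$ yields the logarithmic rate for $A$, and the additional composition incurred in inverting the second transform for $q$—after already having spent one logarithm on $A$—accounts for the double logarithm. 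The main obstacle is precisely this two-step, gauge-constrained recovery: isolating $dA$ at leading order, inverting it to $A$ under the divergence and support constraints, and then cleanly removing the exact $A$-contribution so that the subleading term depends on $q$ alone, all while keeping the remainder and frequency-cutoff estimates uniform enough to deliver the sharp $\log$ and $\log$-$\log$ rates.
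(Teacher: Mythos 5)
Your overall architecture matches the paper's: an integral identity from integration by parts, geometric optics solutions with amplitudes solving a transport equation that carries $\exp\big(i\int\omega\cdot A\big)$ along light rays with $O(\sigma^{-1})$ remainders, extraction of the light-ray transform of $\omega\cdot A$ at leading order (the paper's Lemmas \ref{Lemma 3.1}--\ref{Lemma 3.4}), resolution of the gauge ambiguity through the curl components $\widehat{\beta}_{j,k}=\xi_j\widehat{a}_k-\xi_k\widehat{a}_j$ combined with $\mbox{div}_x A=0$ and the support constraint (Lemma \ref{Lemma 3.6} and the appeal to Lemma B.5 of \cite{[Ibtissem]}), and a second pass for $q$ in which the already-established bound on $A$ enters as an extra error term, producing the $\log$-$\log$ rate (Lemmas \ref{Lemma 3.8}--\ref{Lemma 3.9}).

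There is, however, a genuine gap in the step where you convert the light-ray transform estimate into a bound on the Fourier transform and then on a negative Sobolev norm. The identity $\int_{\R^n}\mathcal{R}(\omega\cdot A)(y,\omega)e^{-iy\cdot\xi}dy=\omega\cdot\widehat{A}(\xi,\omega\cdot\xi)$ only controls $\widehat{A}(\xi,\tau)$ on the cone of spacelike frequencies $|\tau|\leq|\xi|$ (the paper works on $E=\{|\tau|\leq\frac{1}{2}|\xi|\}$ to keep the factor $\sqrt{1-\tau^2/|\xi|^2}$ bounded below). Your proposed low/high frequency split at a threshold $\rho$ cannot bound the low frequencies with $|\tau|>|\xi|$, because no choice of $\omega\in\mathbb{S}^{n-1}$ reaches them; as written the argument does not close. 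The paper fills this hole with a quantitative analytic continuation (Lemma \ref{Lemma 3.7}): since $\beta_{j,k}$ is compactly supported, $\widehat{\beta}_{j,k}$ is entire with controlled derivative growth, and smallness on $E\cap B(0,1)$ propagates to $B(0,\alpha)$ at the cost of a factor $e^{\alpha(1-\gamma)}\|\cdot\|^{\gamma}_{L^\infty(E)}$. This exponential loss is what forces the choice $\alpha\sim|\log\|\widetilde{\Lambda}_{A_2,q_2}-\widetilde{\Lambda}_{A_1,q_1}\||$ and is the actual source of the logarithmic (rather than H\"older) rate for $A$; your attribution of the logarithm to the $\sigma$-versus-$\rho$ optimization alone is therefore not accurate, and the same continuation step is needed again for $q$, which together with the inherited $\log$ bound on $\|A\|_{L^\infty}$ produces the double logarithm.
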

\noindent Similarly, we can reduce Theorem \ref{Theorem 1.2} to:
\begin{theorem}\label{Theorem 2.3}
We assume that  $T\!\!>\!2\,Diam\, (\Omega)$. There exist  $C,\mu>0$ such that 
if  $\|\widetilde{\mathcal{R}}_{A_{1},q_{1}}-\widetilde{\mathcal{R}}_{A_{2},q_{2}}\|\leq\! m\in(0,1),$ then 
$$\|A_{1}-A_{2}\|_{L^{\infty}(\mathcal{I}_r^\sharp)}\leq C \,|\log\|\widetilde{\mathcal{R}}_{A_{2},q_{2}}-\widetilde{\mathcal{R}}_{A_{1},q_{1}}\||^{-\mu},$$
 for any  $(A_{j}, q_j)\in\!\widetilde{\mathcal{S}}(M)$ satisfying $\|A_j\|_{H^{\alpha}(Q)}+\|q_j\|_{H^{\alpha}(Q)}\leq M,\, j=1,2,$ for some  $\alpha\!\!>\!\!n/2+1$ while assuming  that  $(A_1,q_1)\!\!=\!\!(A_2,q_2)$ in $\overline{Q}_{r}\setminus \mathcal{I}_r^\sharp$ and $\mbox{div}_{x} (A_{1})=\mbox{div}_{x} (A_{2})$.  Moreover, there exist $C',\mu'>0$ s. t  
 $$\|q_2-q_1\|_{L^{\infty}(\mathcal{I}_r^\sharp)}\leq C' \Big( \log |\log \|\widetilde{\mathcal{R}}_{A_{2},q_{2}}-\widetilde{\mathcal{R}}_{A_{1},q_{1}}\||\Big)^{-\mu'}, $$
holds true. Here $C$ and $C'$ depend only on $\Omega$ and $M$.
\end{theorem}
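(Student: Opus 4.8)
The plan is to mirror the scheme used to establish Theorem~\ref{Theorem 2.2} in Section~\ref{Section 3}, the single new ingredient being that the two extra final-data components of $\widetilde{\mathcal{R}}_{A,q}$ let us dispense with the backward light-cone restriction and thereby enlarge the recovery region from $\mathcal{I}_r^*$ to $\mathcal{I}_r^\sharp=Q\cap\mathcal{F}_r$. First I would set up an integral identity adapted to the response operator: writing $u_1$ for a solution of $\mathcal{H}_{A_1,q_1}u_1=0$ and $u_2$ for a solution of the adjoint equation $\mathcal{H}^{*}_{A_2,q_2}u_2=0$, multiplying the equation satisfied by the difference by $\overline{u_2}$ and integrating by parts over $Q$ yields an identity in which $\int_Q\bigl[\,2i(A_1-A_2)\cdot\nabla u_1+\bigl(i\,\mbox{div}_x(A_1-A_2)+(q_1-q_2)\bigr)u_1\,\bigr]\overline{u_2}\,dx\,dt$ is balanced against the lateral term $\int_\Sigma\bigl(\widetilde{\mathcal{R}}^1_{A_1,q_1}-\widetilde{\mathcal{R}}^1_{A_2,q_2}\bigr)(f)\,\overline{u_2}\,d\sigma_x\,dt$ together with the final-data contributions built from the $u(\cdot,T)$ and $\partial_t u(\cdot,T)$ components on $\Omega\times\{T\}$. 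Crucially, these $t=T$ boundary terms now belong to the measured operator, so they are dominated by $\|\widetilde{\mathcal{R}}_{A_1,q_1}-\widetilde{\mathcal{R}}_{A_2,q_2}\|$; in the pure DN-map setting of Theorem~\ref{Theorem 2.2} the same terms had to be annihilated by forcing the geometric optics solution to vanish at $t=T$, and it is precisely that constraint which previously cut the admissible region down to the double cone $\mathcal{I}_r^*$.

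Next I would insert geometric optics (GO) solutions $u_1=e^{i\lambda(x\cdot\omega-t)}(b_1+r_{1,\lambda})$ and $u_2=e^{i\lambda(x\cdot\omega-t)}(b_2+r_{2,\lambda})$, with $\omega\in\mathbb{S}^{n-1}$, principal amplitudes $b_j$ solving first-order transport equations along the light rays of direction $(\omega,1)$, and remainders satisfying $\|r_{j,\lambda}\|_{H^1(Q)}=O(\lambda^{-1})$ via the energy estimate for the wave operator, the amplitudes being cut off so that their supporting rays sweep out $\mathcal{I}_r^\sharp$. Substituting these into the identity, the term $2i(A_1-A_2)\cdot\nabla u_1$ contributes at order $\lambda$, and letting $\lambda\to\infty$ its leading part isolates the light-ray transform of the $\omega$-component of $A_1-A_2$ along rays crossing the forward cone, up to a remainder of size $O(\lambda^{-1})$ and an error bounded by $\lambda\,\|\widetilde{\mathcal{R}}_{A_1,q_1}-\widetilde{\mathcal{R}}_{A_2,q_2}\|$. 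Optimizing in $\lambda$ turns these polynomial bounds into a logarithmic modulus of continuity; then, using the gauge condition $\mbox{div}_x(A_1)=\mbox{div}_x(A_2)$ together with the exterior agreement $(A_1,q_1)=(A_2,q_2)$ on $\overline{Q}_r\setminus\mathcal{I}_r^\sharp$ and varying $\omega$ over $\mathbb{S}^{n-1}$, I invert the transform and recover $A_1-A_2$ with the stated $|\log\|\cdots\||^{-\mu}$ estimate.

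Once $A_1-A_2$ is under control, a second pass through the identity with a refined amplitude choice cancels the magnetic contribution and isolates the light-ray transform of $q_1-q_2$; inverting and re-optimizing in $\lambda$ gives the potential estimate, and since the recovered $A$ enters this step already carrying a logarithmic loss, the bound for $q$ degrades by one further logarithm, which is the origin of the $\bigl(\log|\log\|\cdots\||\bigr)^{-\mu'}$ rate. I expect the main obstacle to be the stability of the light-ray-transform inversion coupled with the $\lambda$-optimization: one must track precisely how the $O(\lambda)$ growth of the magnetic term, the $O(\lambda^{-1})$ remainder, and the operator-norm factor interact, and verify that the final-data terms genuinely absorb the backward-cone obstruction so that the conclusion holds throughout $\mathcal{I}_r^\sharp$ rather than only on $\mathcal{I}_r^*$. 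The remainder estimates must moreover be uniform in $\omega$ and $\lambda$, and the GO amplitudes arranged to cover every light ray meeting $\mathcal{I}_r^\sharp$; this geometric bookkeeping, together with the uniform energy bounds, is where most of the technical effort lies.
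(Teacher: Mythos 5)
Your proposal follows essentially the same route as the paper: reuse the geometric optics machinery of Section~\ref{Section 3}, observe that the $t=T$ boundary terms in the integration-by-parts identity are now part of the measured operator $\widetilde{\mathcal{R}}_{A,q}$ and hence controlled by its norm (so the constraint $(\mbox{supp}\,\varphi\pm T\omega)\cap\Omega=\emptyset$, i.e.\ the backward-cone restriction, can be dropped), then run the same light-ray transform, analytic continuation, and $\sigma$-optimization arguments to get the $\log$ bound for $A$ on $\mathcal{I}_r^\sharp$ and feed it into the second pass for $q$ to get the $\log$-$\log$ bound. This matches Lemmas~\ref{Lemma4.1} and~\ref{Lemma4.2} and the surrounding discussion in Section~\ref{Section 4}.
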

\noindent And if we further vary $u_0$ and $u_1$, we will then be able to show the \textit{full} recovery of $V$ and $p$. This amounts to proving the \textit{full }recovery of the known divergence potential $A$ and $q$. 
\begin{theorem}\label{Theorem 2.4}
 There exist  $C,\mu>0$ such that 
if  $\|\widetilde{\Gamma}_{A_{1},q_{1}}-\widetilde{\Gamma}_{A_{2},q_{2}}\|\leq\! m\in(0,1),$ then we have 
$$\|A_{1}-A_{2}\|_{L^{\infty}(Q)}\leq C \,|\log\|\widetilde{\Gamma}_{A_{2},q_{2}}-\widetilde{\Gamma}_{A_{1},q_{1}}\||^{-\mu},$$
 for all  $(A_{j}, q_j)\in\!\mathcal{C}^3(\overline{Q})\!\!\times\!\!\mathcal{C}^1(\overline{Q})$ satisfying $\|A_j\|_{H^{\alpha}(Q)}\!+\!\|q_j\|_{H^{\alpha}(Q)}\leq M,\, j=1,2,$ with $\alpha\!\!>\!\!n/2+1$ while assuming  that  $\mbox{div}_{x} (A_{1})=\mbox{div}_{x} (A_{2})$.  Moreover, there exist $C',\mu'>0$ such that   
 $$\|q_2-q_1\|_{L^{\infty}(Q)}\leq C' \Big( \log |\log \|\widetilde{\Gamma}_{A_{2},q_{2}}-\widetilde{\Gamma}_{A_{1},q_{1}}\||\Big)^{-\mu'}, $$
holds true. Here $C$ and $C'$ depend only on $\Omega$ and $M$.
\end{theorem}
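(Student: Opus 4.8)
The plan is to run, for the full-data operator $\widetilde{\Gamma}_{A,q}$, the same geometric optics scheme already used for Theorems \ref{Theorem 2.2} and \ref{Theorem 2.3}, the new ingredient being the freedom to vary the Cauchy data $(u_0,u_1)$ in tandem with the final observations $u(\cdot,T)$ and $\p_t u(\cdot,T)$. Write $A=A_2-A_1$ and $q=q_2-q_1$. First I would establish an Alessandrini-type identity: if $w_1$ solves $\mathcal{H}_{A_1,q_1}w_1=0$ with data $(f,u_0,u_1)$, if $w_2$ solves $\mathcal{H}_{A_2,q_2}w_2=0$ with the same data, and if $v$ solves the adjoint equation $\mathcal{H}^{*}_{A_2,q_2}v=0$, then $w=w_2-w_1$ has vanishing Cauchy and lateral data, and multiplying $\mathcal{H}_{A_2,q_2}w=(\mathcal{H}_{A_1,q_1}-\mathcal{H}_{A_2,q_2})w_1$ by $\overline{v}$ and integrating by parts yields
\begin{equation*}
\int_Q\big(2i\,A\cdot\nabla w_1+c\,w_1\big)\overline{v}\,dx\,dt=\big\langle(\widetilde{\Gamma}_{A_1,q_1}-\widetilde{\Gamma}_{A_2,q_2})(f,u_0,u_1),\,h\big\rangle,
\end{equation*}
where $c$ collects the zeroth order terms $q_1-q_2$ and $A_1\cdot A_1-A_2\cdot A_2$ (the first order divergence term drops out thanks to $\mbox{div}_x A_1=\mbox{div}_x A_2$) and the right-hand side is bounded by $\|\widetilde{\Gamma}_{A_1,q_1}-\widetilde{\Gamma}_{A_2,q_2}\|$ precisely because all three data channels (the Neumann trace, $u(\cdot,T)$ and $\p_t u(\cdot,T)$) are available.

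Next I would substitute geometric optics solutions into this identity. For a direction $\omega\in\mathbb{S}^{n-1}$ and a large parameter $\lambda>0$, I would construct solutions
\begin{equation*}
w_1=e^{i\lambda(x\cdot\omega-t)}\big(a+R_1\big),\qquad v=e^{i\lambda(x\cdot\omega-t)}\big(b+R_2\big),
\end{equation*}
in which the phase $x\cdot\omega-t$ is light-like (so the principal part of the wave operator annihilates it), the amplitudes $a,b$ solve first order transport equations along the light rays $s\mapsto(x_0+s\omega,\,t_0+s)$ carrying $A_1$ and $A_2$ respectively, and the remainders obey $\|R_j\|=O(\lambda^{-1})$ in the natural energy norm. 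The decisive point, and what distinguishes Theorem \ref{Theorem 2.4} from Theorem \ref{Theorem 2.3}, is that with $(u_0,u_1)$ now free these solutions need not vanish at $t=0$: the concentrating rays may be taken to issue from any point of $\Omega$ at $t=0$ and to cross the whole cylinder, so that the identity probes all of $Q$ rather than only the forward cone $\mathcal{F}_r$ reachable from the lateral boundary under zero initial data. This is exactly what enlarges the recovery region to the entire $Q$.

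I would then read off the coefficients order by order in $\lambda$. Because $\nabla w_1=i\lambda\,\omega\,a\,e^{i\lambda(x\cdot\omega-t)}+O(1)$ and the oscillating phases cancel in $w_1\overline{v}$, the leading $O(\lambda)$ part of the identity reduces, after division by $\lambda$, to the light-ray transform of $\omega\cdot A$ weighted by $a\overline{b}$, up to an error of size $C\lambda^{-1}$ plus a fixed positive power of $\lambda$ times $\|\widetilde{\Gamma}_{A_1,q_1}-\widetilde{\Gamma}_{A_2,q_2}\|$. Letting $\omega$ range over $\mathbb{S}^{n-1}$ and varying the base point, I would invert this transform by Fourier synthesis to reconstruct $\widehat{A}$, using the gauge condition $\mbox{div}_x A_1=\mbox{div}_x A_2$ to pass from the magnetic field to $A$ itself; optimizing $\lambda$ against the measurement error then produces the $\log$-type bound for $\|A_1-A_2\|_{L^\infty(Q)}$. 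Finally, inserting the now controlled $A$ back into the $O(1)$ part of the same identity isolates $q$ against a similar light-ray transform; inverting once more and propagating the (already logarithmic) control of $A$ through this second inversion yields the announced $\log$-$\log$ estimate for $\|q_2-q_1\|_{L^\infty(Q)}$.

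The main obstacle is the geometric optics construction with nonvanishing, freely prescribed Cauchy data: one must build solutions concentrated along arbitrary light rays through $Q$ while keeping the remainder $O(\lambda^{-1})$ uniformly up to $t=0$ and $t=T$, and one must check that every boundary and final-time term produced by the integration by parts is genuinely absorbed into $\widetilde{\Gamma}_{A,q}$ --- this is where the observations $u(\cdot,T)$ and $\p_t u(\cdot,T)$ are indispensable, since they account for the contributions at the final surface that were simply absent in the zero-initial-data, forward-cone setting of Theorem \ref{Theorem 2.3}. Beyond this, the quantitative heart of the argument is the stability of the light-ray transform inversion and the careful bookkeeping by which the single logarithmic loss for $A$ degrades into a double logarithmic loss for $q$.
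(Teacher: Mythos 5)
Your proposal is correct and follows essentially the same route as the paper: the paper's proof of Theorem \ref{Theorem 2.4} (Section \ref{Section 5}) simply reruns the geometric optics/light-ray-transform machinery of Sections \ref{Section 3} and \ref{Section 4}, with the sole new ingredient being exactly the one you identify --- since $(u_0,u_1)$ may now be prescribed freely, the amplitude $\varphi_h$ no longer needs its support (or its translates by $\pm T\omega$) to avoid $\Omega$, so the concentrating rays sweep out all of $Q$ and the Fourier/analytic-continuation/interpolation steps give the $\log$ bound for $A$ and then the $\log$-$\log$ bound for $q$ on the whole cylinder. The only cosmetic differences are your sign convention in the phase and your slightly more explicit bookkeeping of the final-time terms absorbed into $\widetilde{\Gamma}_{A,q}$.
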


The previous theorems describe the stable recovery of the magnetic potential of known divergence  $A$  and the electric potential $q$ in different areas from the knowledge of different types of measurements. In the next sections we prove  these preliminary results.
\section{Determination  of the unknown terms from boundary observations }\label{Section 3}
In this section we focus on proving our first main result. Namely we aim to stably recover $V$ and $p$ appearing in (\ref{1.1}) from  $\Lambda_{V,p}$ which amounts to stably recover  $A$ and $q$ appearing in (\ref{2.2}) from $\widetilde{\Lambda}_{A,q}$. The latter inverse problem is related to the one seen by R. Salazar and A. Waters  \cite{[Salazar]} who  tried to recover time-space dependent, vector and scalar potentials in the relativistic Schr\"odinger equation  in an infinite cylindrical domain $\Omega\times\R$.  We shall first construct  special solutions to the equation (\ref{2.2}).
\subsection{Construction of geometrical optics solutions}
 Now we are going to  construct particular solutions which plays an important role in proving the results. Let $\varphi\in \mathcal{C}_{0}^{\infty}(\R^{n})$. For any $\omega\in \mathbb{S}^{n-1}$, we denote by
\begin{equation}
\label{3.10}
\phi(x,t)=\varphi(x+t\omega).
\end{equation}
We can check that $\phi$ is a solution to 
$(\p_{t}-\omega\cdot \nabla)\phi(x,t)=0.$
We next establish the following lemma.  
\begin{Lemm}\label{Lemma 3.1}  Let  $\omega\in\mathbb{S}^{n-1}$ and $\varphi\in \mathcal{C}^{\infty}_{0}(\R^{n})$. Let  $\phi$ be given by (\ref{3.10}).  Then, for  any $\sigma>0$ the equation $\mathcal{H}_{A,q}u=0$ in $Q$ admits a solution  of the form 
$$u(x,t)=\phi(x,t)\,b(x,t)\,e^{i\sigma(x\cdot \omega+t)}+r(x,t).$$
Here $b$ is given by 
$$b(x,t)=\exp \Big(i\int_{0}^{t} \omega\cdot A (x+(t-s)\omega,s)\,ds\Big),$$
and the correction term $r$ satisfies
$r_{t=0}=\p_{t}r_{|t=0}=0$ in $\Omega$,  and  $r_{|\Sigma}=0$.
In addition, we have 
\begin{equation}\label{3.11}
\sigma \|r\|_{L^{2}(Q)}+\|\nabla r\|_{L^{2}(Q)}\leq C \|\varphi\|_{H^{3}(\R^{n})},
\end{equation}
where $C$ is a positive constant.
\end{Lemm}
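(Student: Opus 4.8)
The plan is to seek $u$ as a geometric optics ansatz $u=u_0+r$ whose principal part is $u_0(x,t)=a(x,t)\,e^{i\sigma(x\cdot\omega+t)}$ with amplitude $a=\phi\,b$, and to choose $a$ so that $\mathcal{H}_{A,q}u_0$ contains no term growing with $\sigma$, leaving only an $O(1)$ source that the correction $r$ absorbs. Writing $\psi=x\cdot\omega+t$ and expanding $\mathcal{H}_{A,q}\para{a\,e^{i\sigma\psi}}=e^{i\sigma\psi}\big[\sigma^2(\cdots)+\sigma^1(\cdots)+(\cdots)\big]$ using $\mathcal{H}_{A,q}=\partial_t^2-\Delta-2iA\cdot\nabla-i\,\dive_x A+A\cdot A+q$, the first step is to record that the $\sigma^2$ coefficient vanishes identically because $\abs{\nabla_x\psi}^2=\abs{\omega}^2=1=\abs{\partial_t\psi}^2$ (the eikonal relation), and to isolate the $\sigma^1$ coefficient, which is proportional to $i(\partial_t a-\omega\cdot\nabla a)+(A\cdot\omega)a$.

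Next I would impose the transport equation $(\partial_t-\omega\cdot\nabla)a=i\,(A\cdot\omega)\,a$ to cancel the $\sigma^1$ term. Since $\phi$ satisfies $(\partial_t-\omega\cdot\nabla)\phi=0$ by construction, writing $a=\phi b$ reduces this to $(\partial_t-\omega\cdot\nabla)b=i(A\cdot\omega)b$ with $b(\cdot,0)=1$; differentiating the stated formula for $b$ and applying the Leibniz rule to the $t$-dependent integrand, the boundary contribution at $s=t$ yields exactly $i\,\omega\cdot A(x,t)$ while the interior terms coming from $\partial_t$ cancel against those from $\omega\cdot\nabla_x$, confirming that this $b$ solves the transport problem. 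With $a$ so chosen, $\mathcal{H}_{A,q}u_0=e^{i\sigma\psi}k_0$ where $k_0:=(\partial_t^2-\Delta-2iA\cdot\nabla-i\,\dive_x A+A\cdot A+q)a$ is independent of $\sigma$; since $b$ and its derivatives up to order three are bounded (as $A\in\mathcal{C}^3$), one has $\norm{k_0}_{L^2(Q)}\le C\norm{\varphi}_{H^2(\R^n)}$ and $\norm{\partial_t k_0}_{L^2(Q)}\le C\norm{\varphi}_{H^3(\R^n)}$, the constants depending on $M$.

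It then remains to define $r$ as the solution of $\mathcal{H}_{A,q}r=-e^{i\sigma\psi}k_0$ in $Q$ with $r|_{t=0}=\partial_t r|_{t=0}=0$ and $r|_\Sigma=0$; existence, uniqueness and the basic energy inequality follow from well-posedness of the forward problem \eqref{2.2} (see \cite{[LM]}), and $u:=u_0+r$ then solves $\mathcal{H}_{A,q}u=0$ with the stated traces. The standard hyperbolic energy estimate at once gives $\norm{\nabla r}_{L^2(Q)}+\norm{\partial_t r}_{L^2(Q)}\le C\norm{e^{i\sigma\psi}k_0}_{L^2(Q)}=C\norm{k_0}_{L^2(Q)}\le C\norm{\varphi}_{H^3(\R^n)}$, which is the gradient part of \eqref{3.11}.

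The main obstacle is the factor $\sigma$ multiplying $\norm{r}_{L^2(Q)}$: the plain energy estimate only yields $\norm{r}_{L^2(Q)}\le C$, which is useless after multiplication by $\sigma$. To gain the missing power I would exploit the oscillation of the source via $e^{i\sigma\psi}=(i\sigma)^{-1}\partial_t e^{i\sigma\psi}$, rewriting it in a time-divergence form $-e^{i\sigma\psi}k_0=-\tfrac{1}{i\sigma}\partial_t\!\big(e^{i\sigma\psi}k_0\big)+\tfrac{1}{i\sigma}e^{i\sigma\psi}\partial_t k_0=:\partial_t F_1+F_0$, where $\norm{F_1}_{L^2(Q)}\le C\sigma^{-1}\norm{k_0}_{L^2(Q)}$ and $\norm{F_0}_{L^1(0,T;L^2(\Omega))}\le C\sigma^{-1}\norm{\partial_t k_0}_{L^2(Q)}\le C\sigma^{-1}\norm{\varphi}_{H^3(\R^n)}$. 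Applying the refined energy estimate for \eqref{2.2} with a source of type $F_0+\partial_t F_1$ and zero Cauchy data, which bounds $\norm{r}_{L^\infty(0,T;L^2(\Omega))}$ by $C\para{\norm{F_0}_{L^1(0,T;L^2(\Omega))}+\norm{F_1}_{L^2(Q)}}$, then yields $\norm{r}_{L^2(Q)}\le C\sigma^{-1}\norm{\varphi}_{H^3(\R^n)}$, i.e. the remaining half of \eqref{3.11}. This is exactly the step that forces the $H^3$ control of $\varphi$, since the extra time derivative falls on $k_0$ and hence on third derivatives of $\varphi$; the only points needing care are justifying the integration by parts in time against the zero initial data and checking that the refined energy inequality is available for $\mathcal{H}_{A,q}$ with its first-order term.
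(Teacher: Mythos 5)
Your construction is correct and its skeleton coincides with the paper's: the same ansatz $\phi\,b\,e^{i\sigma(x\cdot\omega+t)}$, the eikonal identity killing the $\sigma^{2}$ coefficient, the transport equation $(\p_t-\omega\cdot\nabla-i\,\omega\cdot A)b=0$ killing the $\sigma^{1}$ coefficient, the definition of $r$ as the solution of the inhomogeneous problem with zero Cauchy and lateral data, the plain energy estimate for $\norm{\nabla r}_{L^2(Q)}$, and a single integration by parts in time that trades the oscillation $e^{i\sigma t}$ for a factor $\sigma^{-1}$ at the cost of one extra derivative of $\varphi$ (which is exactly why $H^{3}$ appears). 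The one genuine divergence is the mechanism for the bound $\sigma\norm{r}_{L^{2}(Q)}\leq C\norm{\varphi}_{H^{3}(\R^{n})}$. The paper integrates the \emph{equation} in time: it sets $\tilde w(x,t)=\int_0^t r(x,s)\,ds$, derives $\mathcal{H}_{A,q}\tilde w=F_1+F_2$ with $F_1=\int_0^t g(\cdot,s)\,ds$, where the commutator terms $F_2$ (produced by the time dependence of $A$, $\dive_x A$ and $q$, and involving $r$ and $\nabla r$ under the integral) are absorbed via Gr\"onwall together with the gradient bound, and then integrates $F_1$ by parts in $s$ to make it $O(\sigma^{-1})$; the standard energy estimate for $\tilde w$ then controls $\p_t\tilde w=r$ in $L^\infty(0,T;L^2(\Omega))$. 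You instead keep the equation for $r$, rewrite the source in divergence form $\p_tF_1+F_0$ with both pieces $O(\sigma^{-1})$, and invoke a refined duality-type energy estimate for such sources. Both routes are legitimate: yours avoids the commutator terms $F_2$ altogether, while the paper's avoids having to establish the refined inequality for $\mathcal{H}_{A,q}$. If you follow your route, note that the delicate boundary term in the duality argument is not at the zero Cauchy data of $r$ but is $\int_{\Omega}F_1(x,0)\overline{w}(x,0)\,dx$ (with $w$ the dual backward solution), which survives because $F_1(\cdot,0)=-(i\sigma)^{-1}e^{i\sigma x\cdot\omega}k_0(\cdot,0)$ does not vanish; it is of order $\sigma^{-1}$ and therefore harmless, but it should be written down explicitly.
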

\begin{proof}{}
 Setting 
 $$g(x,t)=-\mathcal{H}_{A,q}(\phi(x,t)b(x,t)e^{i\sigma(x \cdot\omega +t)}).$$  To establish our lemma we just need to show that there exists $r$ that satisfies \begin{equation}\label{3.12}
    \mathcal{H}_{A,q}r=g, \,\,\mbox{in}\,\,Q,  \quad
  r_{|t=0}=\p_{t}r_{|t=0}=0,\,\,\mbox{in}\,\,\Omega,\quad \mbox{and}\,\,\,\,\,
    r_{|\Sigma}=0,   
\end{equation}
in light of (\ref{3.10}) and since $b$ solves $ (\p_{t}-\omega\cdot\nabla -i\omega\cdot A )b=0,$ one gets 
$$
g(x,t)=-e^{i\sigma(x\cdot\omega+t)}\mathcal{H}_{A,q}(\phi(x,t)b(x,t) )
=-e^{i\sigma(x\cdot\omega+t)}g_{0}(x,t).
$$
Here $g_{0}\in L^{1}(0,T,L^{2}(\Omega))$. We denote by $w$ this fucntion 
\begin{equation}\label{3.13}
\tilde{w}(x,t)=\int_{0}^{t}r(x,s)\,ds.
\end{equation}
Thus, from (\ref{3.13}) and after integrating (\ref{3.12}) over $[0,t]$ for any $t\in(0,T)$, we obtain 
$\mathcal{H}_{A,q}\tilde{w}=F_{1}+F_{2} \,\,\mbox{in}\,Q,$ with $
   \tilde{w}_{|t=0}=\p_{t}\tilde{w}_{|t=0}=0 $ and 
    $\tilde{w}_{| \Sigma}=0$. 
Here $F_1$  and $F_{2}$ is defined as follows
$$F_{1}(x,t)=\displaystyle\int_{0}^{t}g(x,s)\,ds,$$ 
\begin{multline*}
F_{2}(x,t)=\int_{0}^{t}\Big(A^{2}(x,t)-A^{2}(x,s)\Big)r(x,s)\,ds+\int_{0}^{t}\Big( q(x,t)-q(x,s) \Big)  r(x,s)\,ds \cr
-i\int_{0}^{t} \Big(\mbox{div$_{x}$} A(x,t) -\mbox{div$_{x}$} A(x,s)\big) r(x,s)\,ds -2i \int_{0}^{t}\Big(  A(x,t)- A(x,s)\Big) \cdot \nabla r(x,s)\,ds. 
\end{multline*}
Next, we apply the well known energy estimate designed for hyperbolic equations on the interval
$[0,\tau]$ for  $\tau\in [0,T]$, then we obtain 
$$
\|\p_{t}\tilde{w}(\cdot,\tau)\|^{2}_{L^{2}(\Omega)}+\|\nabla \tilde{w}(\cdot, \tau)\|^{2}_{L^{2}(\Omega)}\leq C
 \Big(\|F_{1}\|_{L^{2}(Q)}^{2}+ T \displaystyle\int_{0}^{\tau} \|\p_{t}\tilde{w}(\cdot, s)\|^{2}_{L^{2}(\Omega)} ds+ \displaystyle\int_{0}^{\tau} \|\nabla \tilde{w}(\cdot, t)\|_{L^{2}(\Omega)}^{2}\,dt
 \Big).$$
Using  Gr\"onwall's Lemma, we obtain 
$\|\p_{t}\tilde{w}(\cdot,\tau)\|_{L^{2}(\Omega)}^{2}\leq C \|F_{1}\|^{2}_{L^{2}(Q)}.$
Which yields in view of (\ref{3.13}),
$$\|r(\cdot,t)\|_{L^{2}(\Omega)}\leq C
\|F_{1}\|_{L^{2}(Q)}.$$
Besides, $F_{1}$ can be seen as 
$$F_{1}(x,t)=\int_{0}^{t}g(x,s)\,ds=\frac{1}{i\sigma} \int_{0}^{t}\,g_{0}(x,s)\p_{s}(e^{i\sigma(x\cdot\omega+s)})\,ds.$$
An integration by parts with respect to the parameter  $s$, yields the existence of  a
constant  $C>0$ such that
$$\|r(\cdot,t)\|_{L^{2}(\Omega)}\leq \frac{C}{\sigma}\|\varphi\|_{H^{3}(\R^{n})}.$$
Bearing in mind that  $\|g\|_{L^{2}(0,T;L^{2}(\Omega))}\leq C \|\varphi\|_{H^{3}(\R^{n})}$ and applying the
energy estimate  to  (\ref{3.12})
we get 
$$\|\p_{t}r(\cdot,t)\|_{L^{2}(\Omega)}+\|\nabla r(\cdot,t)\|_{L^{2}(\Omega)}\leq C \|\varphi\|_{H^{3}(\R^{n})}. $$
The proof of the lemma is completed. 
\end{proof}
 Similarly, we show the following statement 
\begin{Lemm}\label{Lemma 3.2}
  Let $\omega\in\mathbb{S}^{n-1}$ and  $\varphi\in \mathcal{C}^{\infty}_{0}(\R^{n})$.  Let $\phi$ be given  by (\ref{3.10}).  Thus, for  all $\sigma>0$, we can construct a solution to  $\mathscr{H}^{*}_{A,q}v=0$ in $Q$
given by this expression
$$v(x,t)=\phi(x,t)\,b(x,t)\,e^{i\sigma(x\cdot \omega+t)}+r(x,t).$$ 
Here $b$ is defined as follows 
$$b(x,t)=\exp \Big(i\int_{0}^{t} \omega\cdot \overline{A}(x+(t-s)\omega,s)\,ds\Big),$$
and  $r$ is the error term that satisfies
$r_{|t=T}=\p_{t}r_{|t=T}=0$ in $\Omega$ and  $r_{|\Sigma}=0$. 
In addition, there exists $C>0$ such that we have
\begin{equation}\label{3.14}
\sigma \|r\|_{L^{2}(Q)}+\|\nabla r\|_{L^{2}(Q)}\leq C \|\varphi\|_{H^{3}(\R^{n})}.
\end{equation}
\end{Lemm}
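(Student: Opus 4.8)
The plan is to reduce everything to the construction already carried out in Lemma \ref{Lemma 3.1}. First I would observe that, since $q$ is real valued, the formal adjoint of $\mathcal{H}_{A,q}=\p_t^2-\Delta_A+q$ is $\mathscr{H}^*_{A,q}=\p_t^2-\Delta_{\overline A}+q=\mathcal{H}_{\overline A,q}$: indeed the adjoint of each first order factor $\p_j+iA_j$ is $-(\p_j+i\overline{A_j})$, so that $(\Delta_A)^*=\Delta_{\overline A}$. Hence solving $\mathscr{H}^*_{A,q}v=0$ is the same as solving $\mathcal{H}_{\overline A,q}v=0$, and this already explains why the amplitude $b$ in the statement carries $\overline A$ in place of $A$: it is exactly the amplitude of Lemma \ref{Lemma 3.1} with $A$ replaced by $\overline A$. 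The only genuine difference with the forward construction is that the Cauchy data of the correction term are now prescribed at $t=T$ rather than at $t=0$, which reflects the backward well-posedness of the adjoint problem.

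Next I would set $g(x,t)=-\mathscr{H}^*_{A,q}\big(\phi(x,t)\,b(x,t)\,e^{i\sigma(x\cdot\omega+t)}\big)$ and note that it suffices to produce a correction $r$ solving $\mathscr{H}^*_{A,q}r=g$ in $Q$ with $r_{|t=T}=\p_t r_{|t=T}=0$ and $r_{|\Sigma}=0$. Since the phase $x\cdot\omega+t$ is characteristic for $\p_t^2-\Delta$ (because $|\omega|=1$), the $\sigma^2$ contributions cancel; and since $b$ is built so as to solve the transport equation $(\p_t-\omega\cdot\nabla-i\omega\cdot\overline A)b=0$ while $\phi$ solves $(\p_t-\omega\cdot\nabla)\phi=0$, the $\sigma^1$ contributions cancel as well. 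One is then left with $g(x,t)=-e^{i\sigma(x\cdot\omega+t)}g_0(x,t)$ where $g_0\in L^1(0,T;L^2(\Omega))$ and $\|g\|_{L^2(Q)}\le C\|\varphi\|_{H^3(\R^n)}$, exactly as in the forward case.

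For the decay estimate I would repeat the antiderivative argument of Lemma \ref{Lemma 3.1}, but anchored at the final time. Setting $\tilde w(x,t)=\int_T^t r(x,s)\,ds$, one has $\p_t\tilde w=r$ and $\tilde w_{|t=T}=\p_t\tilde w_{|t=T}=0$; integrating $\mathscr{H}^*_{A,q}r=g$ from $T$ to $t$ gives $\mathscr{H}^*_{A,q}\tilde w=F_1+F_2$ with the same structure as before, $F_1(x,t)=\int_T^t g(x,s)\,ds$ and $F_2$ collecting the commutator terms linear in $r$ and $\nabla r$ with coefficients of the form $\overline A(x,t)-\overline A(x,s)$, $q(x,t)-q(x,s)$ and $\mathrm{div}_x\overline A(x,t)-\mathrm{div}_x\overline A(x,s)$. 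Applying the energy estimate for the wave operator on the interval $[\tau,T]$, propagated backward from the final data, together with Gr\"onwall's lemma yields $\|\p_t\tilde w(\cdot,\tau)\|_{L^2(\Omega)}\le C\|F_1\|_{L^2(Q)}$, that is $\|r(\cdot,t)\|_{L^2(\Omega)}\le C\|F_1\|_{L^2(Q)}$. Writing $F_1(x,t)=\frac{1}{i\sigma}\int_T^t g_0(x,s)\,\p_s\big(e^{i\sigma(x\cdot\omega+s)}\big)\,ds$ and integrating by parts in $s$ gains one power of $\sigma$, giving $\|r(\cdot,t)\|_{L^2(\Omega)}\le\frac{C}{\sigma}\|\varphi\|_{H^3(\R^n)}$. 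A final application of the energy estimate directly to $\mathscr{H}^*_{A,q}r=g$, using $\|g\|_{L^2(Q)}\le C\|\varphi\|_{H^3(\R^n)}$, controls $\|\p_t r\|_{L^2(\Omega)}+\|\nabla r\|_{L^2(\Omega)}$, and assembling these two bounds gives (\ref{3.14}).

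The routine parts are the commutator bookkeeping in $F_2$ and tracking the constants; the one point that needs genuine care, and which is the only real departure from Lemma \ref{Lemma 3.1}, is the \emph{time reversal}: the antiderivative must be based at $t=T$, the energy inequality must be propagated backward from the final data, and the well-posedness invoked is that of the adjoint Cauchy problem with data at $t=T$. Once this orientation is set up correctly, every estimate is the mirror image of the forward construction with $A$ replaced by $\overline A$.
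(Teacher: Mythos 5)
Your proposal is correct and follows exactly the route the paper intends: the paper gives no separate proof of Lemma \ref{Lemma 3.2}, stating only that it is shown ``similarly'' to Lemma \ref{Lemma 3.1}, and your argument is precisely that mirror image --- replacing $A$ by $\overline{A}$ (consistent with $\mathcal{H}^{*}_{A,q}=\p_t^2-\Delta_{\overline{A}}+q$ as used later in the proof of Lemma \ref{Lemma 3.3}), anchoring the antiderivative $\tilde w$ at $t=T$, and running the energy and Gr\"onwall estimates backward from the final data. No gaps.
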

\subsection{Stability  for the magnetic potential} This section is dedicated to proving the stable determination of the \textit{known-divergence} magnetic potential $A$  from the DN map $\tilde{\Lambda}_{A,q}$. We consider two mangnetic potentials $A_{1},A_{2}\in \mathcal{A}(M,A_{0})$ and two electric potentials $q_{1},\,q_{2}\in \mathcal{Q}(M,q_{0})$. We denote by 
$A(x,t)=(A_{1}-A_{2})(x,t)$ and  $q(x,t)=(q_{1}-q_{2})(x,t).$ 
Besides, we suppose that 
supp $\varphi\subset \mathscr{C}_r$, then it is clear that 
$$\mbox{Supp}\,\varphi\cap \Omega=\emptyset\,\,\,\,\,\mbox{and}\,\,\,\,\,(\mbox{Supp}\,\varphi \pm T\omega)\cap \Omega=\emptyset,\,\,\,
\forall\,\omega\in \mathbb{S}^{n-1}.$$
Let us consider  a pure imaginary complex vector in $A\in {W}^{3,\infty}(\Omega,\mathbb{C}^{n})$. Then, Green's formula yields
 \begin{equation}\label{FG}
 \int_{Q}\Delta_{A}u\,\overline{v}\,dx\,dt=\int_{Q}\overline{\Delta_{\overline{A}}v}\,u\,dx+\int_{\Sigma}\Big( (\p_{\nu}+i\nu\cdot A)u\,\overline{v}-\overline{(\p_{\nu}+i\nu\cdot\overline{A})v}\,u\,\Big )d\sigma_x\,dt,
 \end{equation}
 for any $u,\,v\in H^{1}(Q)$ satisfying $\Delta u,\,\Delta\,v\in L^{2}(Q)$.  Since $A_{1}=A_{2}$ on $\Sigma$, we  can extend $A$ to a $H^{1}(\R^{n+1})$ vector field by defining it by zero outside of $Q_r$ and we can extend $q$ to an $L^{\infty}(\R^{n+1})$ function by zero outside $Q$. In the sequel of the text, we denote by $A$ and $q$ these extensions. 
\begin{Lemm}\label{Lemma 3.3}
 Let $\omega\in\mathbb{S}^{n-1}$. There exists  $C>0$ depending on $\Omega$, $T$ and $M$ such that we have 
$$\Big|\int_{\R^{n}}\varphi^{2}(y)\Big[ \exp\Big( -i\int_{0}^{T}\omega\cdot A(y-s\omega,s)\,ds \Big)-1 \Big]\,dy  \Big|\leq C\Big(\sigma^{2}
\|\widetilde{\Lambda}_{A_{2},q_{2}}-\widetilde{\Lambda}_{A_{1},q_{1}}\|+\frac{1}{\sigma}
\Big)\|\varphi\|_{H^{3}(\R^{n})}^{2}.$$
 for any \color{black} $\sigma>0$ sufficiently large.
\end{Lemm}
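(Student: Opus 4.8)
The plan is to probe the difference $\widetilde\Lambda_{A_2,q_2}-\widetilde\Lambda_{A_1,q_1}$ with a pair of geometric optics solutions oscillating along opposite directions, so that the large phases cancel in their product while the amplitudes fuse into precisely the light--ray weight displayed in the statement.

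First I would fix $\omega$ and $\varphi$ as above and, writing $A=A_1-A_2$, invoke Lemma \ref{Lemma 3.1} for $\mathcal{H}_{A_2,q_2}$ to get a solution $u_2=\phi\,b_2\,e^{i\sigma(x\cdot\omega+t)}+r_2$ of $\mathcal{H}_{A_2,q_2}u_2=0$, and Lemma \ref{Lemma 3.2} for $\mathcal{H}^*_{A_1,q_1}$ to get a solution $v_1=\phi\,\beta_1\,e^{i\sigma(x\cdot\omega+t)}+\rho_1$ of $\mathcal{H}^*_{A_1,q_1}v_1=0$, with $\phi$ as in \eqref{3.10}. Since $A_1,A_2$ are purely imaginary, the amplitudes obey the transport equations built into those lemmas, whence $(\p_t-\omega\cdot\nabla)(b_2\overline{\beta_1})=-i(\omega\cdot A)\,b_2\overline{\beta_1}$; in the product $u_2\overline{v_1}$ the factors $e^{\pm i\sigma(x\cdot\omega+t)}$ cancel and the leading amplitude is $\phi^2 b_2\overline{\beta_1}$, which along the ray through $y$ equals $\varphi^2(y)\exp(-i\int_0^t\omega\cdot A(y-s\omega,s)\,ds)$ under the substitution $y=x+t\omega$.

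Next I would produce the integral identity. Let $\widetilde u_1$ solve $\mathcal{H}_{A_1,q_1}\widetilde u_1=0$ with lateral data $f:=u_2|_\Sigma$ and zero Cauchy data, and set $w=u_2-\widetilde u_1$; then $w$ has vanishing Cauchy data at $t=0$, vanishes on $\Sigma$, and solves $\mathcal{H}_{A_1,q_1}w=(\mathcal{H}_{A_1,q_1}-\mathcal{H}_{A_2,q_2})u_2$. Pairing with $\overline{v_1}$ and integrating by parts in $t$ together with Green's formula \eqref{FG}, I would check that every temporal boundary term vanishes — those at $t=0$ because $w$ has zero Cauchy data, those at $t=T$ because the support hypotheses on $\varphi$ force $v_1$ and $\p_t v_1$ to vanish on $\Omega$ at $t=T$ — while the lateral term reduces, using $w|_\Sigma=0$ and $A_1=A_2$ on $\Gamma$, to the Dirichlet--to--Neumann difference. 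This yields
$$
\int_\Sigma(\widetilde\Lambda_{A_2,q_2}-\widetilde\Lambda_{A_1,q_1})f\,\overline{v_1}\,d\sigma_x\,dt=\int_Q(\mathcal{H}_{A_1,q_1}-\mathcal{H}_{A_2,q_2})u_2\,\overline{v_1}\,dx\,dt .
$$
Since $\dive_x A_1=\dive_x A_2$, the difference operator is $-2i\,A\cdot\nabla+(A_1+A_2)\cdot A+(q_1-q_2)$, and substituting the ansatz the only $O(\sigma)$ contribution comes from the gradient striking the phase, namely $2\sigma(\omega\cdot A)\,\phi^2 b_2\overline{\beta_1}$; all remaining contributions are bounded in $L^2(Q)$ or, via \eqref{3.11} and \eqref{3.14}, of order $\sigma^{-1}\|\varphi\|_{H^3}^2$.

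It remains to identify the leading term. Using that $\phi^2$ is annihilated by $\p_t-\omega\cdot\nabla$ together with the transport identity, $(\omega\cdot A)\,\phi^2 b_2\overline{\beta_1}=i\,(\p_t-\omega\cdot\nabla)(\phi^2 b_2\overline{\beta_1})$. Extending $A$ by zero and integrating over $\R^n\times(0,T)$ (legitimate since the integrand is supported in $\overline Q$), the $\omega\cdot\nabla$ part integrates to zero by compact support in $x$ and the $\p_t$ part telescopes to the two time slices; the slice $t=T$ gives, after the substitution $y=x+T\omega$, the factor $\exp(-i\int_0^T\omega\cdot A(y-s\omega,s)\,ds)$, while the slice $t=0$ gives $1$ because $b_2\overline{\beta_1}|_{t=0}=1$, so that
$$
\int_Q(\omega\cdot A)\,\phi^2 b_2\overline{\beta_1}\,dx\,dt=i\int_{\R^n}\varphi^2(y)\Big[\exp\Big(-i\int_0^T\omega\cdot A(y-s\omega,s)\,ds\Big)-1\Big]\,dy .
$$
Combining the two displays, solving for the target and dividing by the leading coefficient $2\sigma$, I would bound the boundary pairing by $\|\widetilde\Lambda_{A_2,q_2}-\widetilde\Lambda_{A_1,q_1}\|\,\|f\|_{H^1(\Sigma)}\,\|v_1\|_{L^2(\Sigma)}$, where the traces of the large--parameter solutions cost the appropriate powers of $\sigma$ and the interior remainders contribute the $\sigma^{-1}$ term; this bookkeeping delivers the announced estimate. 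The two steps I expect to be most delicate, and would write out in full, are the vanishing of all temporal boundary terms — which rests entirely on the support conditions $\mathrm{supp}\,\varphi\subset\mathscr{C}_r$ entering through $\phi(\cdot,0)=\phi(\cdot,T)=0$ on $\Omega$ — and the uniform--in--$\sigma$ control of the genuinely first--order remainder $2i\,A\cdot\nabla r_2$ and its counterparts, for which the gradient bounds \eqref{3.11} and \eqref{3.14} are exactly what is needed.
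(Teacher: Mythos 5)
Your proposal is correct and follows essentially the same route as the paper: construct the geometric optics solution $u_2$ for $\mathcal{H}_{A_2,q_2}$, compare with the solution of $\mathcal{H}_{A_1,q_1}$ sharing its lateral trace, pair with the adjoint geometric optics solution, isolate the $O(\sigma)$ term $2\sigma(\omega\cdot A)\phi^2 b_2\overline{b_1}$, and telescope it via the transport identity into the difference of the $t=T$ and $t=0$ slices, with the remainders controlled by (\ref{3.11}), (\ref{3.14}) and the trace theorem. The only cosmetic difference is that you integrate the total derivative $(\p_t-\omega\cdot\nabla)(\phi^2 b_2\overline{b_1})$ directly in $(x,t)$, whereas the paper first changes variables $y=x+t\omega$ and then integrates $\frac{d}{dt}b_A(y,t)$ in $t$; these are the same computation.
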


\begin{proof}
Since  $\mbox{supp}\,\varphi\cap \Omega=\emptyset$, then Lemma \ref{Lemma 3.1} yields the existence of a geometrical solution $u_{2}$ to 
 \begin{equation*}
\mathcal{H}_{A_{2},q_{2}}u=0, \quad \mbox{in} \,Q,\quad
    u_{|t=0}=\p_{t}u_{|t=0}=0, \quad\mbox{in}\, \Omega,  
\end{equation*}
having this form
$u_{2}(x,t)=\varphi(x+t\omega)b_{2}(x,t)e^{i\sigma(x\cdot \omega+t)}+r_{2}(x,t),$
where $b_2$ is given by this expression $$b_{2}(x,t)=\exp\Big(i\displaystyle\int_{0}^{t}\omega\cdot A_{2}(x+(t-s)\omega,s)\,ds \Big),$$ and the error term $r_{2}$ obeys (\ref{3.11}). On the other hand, we set $f=u_{2|\Sigma}$ and we denote by  $u_{1}$ the solution of
$$ \mathcal{H}_{A_{1},q_{1}}u_{1}=0,\quad  \mbox{in} \,Q, $$
  with initial data
  $ u_{1|t=0}=\p_{t}u_{1|t=0}=0\,\, \, \mbox{in}\, \Omega, $
  and the following boundary condition  
 $   u_{1}=u_{2}:=f\,\, \mbox{on} \,\Sigma.$
 Putting $u=u_{1}-u_{2}$. Then, $u$ is a solution to 
\begin{equation}\label{3.16}
\left\{
\begin{array}{ll} \mathcal{H}_{A_{1},q_{1}}u=2iA\cdot \nabla u_{2}+V_{A} u_{2}-q u_{2}, & \mbox{in} \,Q, \\
   \\
    u(\cdot,0)=\p_{t}u(\cdot,0)=0, & \mbox{in}\, \Omega,\\
    \\
    u=0, & \mbox{on} \,\Sigma,
  \end{array}
\right.
\end{equation} 
with $A=A_{1}-A_{2},$  $q=q_{1}-q_{2}$ and $V_{A}=A_{2}^{2}-A_{1}^{2}$. Next, from Lemma \ref{Lemma 3.2} and since  $(\mbox{supp} \varphi \pm T\omega)\cap \Omega=\emptyset$, one can see that there exists a geometrical optic solution $v$ to
\begin{equation*}
 \mathcal{H}^{*}_{A_{1},q_{1}}v=0,  \mbox{in} \,Q, \quad
    v_{|t=T}=\p_{t}v_{t=T}=0, \mbox{in}\, \Omega, 
\end{equation*} 
having this form  
$v(x,t)=\varphi(x+t\omega) b_{1}(x,t)e^{i\sigma(x\cdot\omega+t)}+r_{1}(x,t).$ Here  $\mathcal{H}^{*}_{A_{1},q_{1}}v= (\p_{t}^{2}-\Delta_{\overline{A}_{1}}+q_{1}(x,t))v$, $b_1$ is given by  $$b_{1}(x,t)=\exp\big( i\displaystyle\int_{0}^{t}\omega\cdot \overline{A}_{1}(x+(t-s)\omega,s)\,ds\Big),$$ and the error term   $r_{1}$ obeys (\ref{3.14}). We multiply (\ref{3.16}) by $\overline{v}$ and integrate by parts, then from  (\ref{FG}), one gets
\begin{eqnarray}\label{3.17}
\int_{Q}2iA(x,t)\cdot \nabla u_{2}(x,t)\overline{v}(x,t)\,dx\,dt&=&-\int_{\Sigma}(\widetilde{\Lambda}_{A_{1},q_{1}}-\widetilde{\Lambda}_{A_{2},q_{2}})(f) \overline{v}(x,t)\,d\sigma_x\,dt \cr
&&-\int_{Q}(V_{A}(x,t)-q(x,t))u_{2}(x,t)\overline{v}(x,t)\,dx\,dt.
\end{eqnarray} 
We replace the solutions $u_{2}$ and $v$ by their forms, we obtain 
\begin{eqnarray}\label{3.18}
\displaystyle\int_{Q}2iA(x,t)\cdot \nabla u_{2}(x,t)\overline{v}(x,t)\,dx\,dt&=&\displaystyle\int_{Q}2i A(x,t)\cdot \nabla (\phi b_{2})(x,t) (\phi\overline{b_{1}})(x,t)\,dx\,dt\cr
&&+\displaystyle\int_{Q}2iA(x,t)\cdot \nabla (\phi b_{2})(x,t)\overline{r_{1}}(x,t) e^{i\sigma(x\cdot\omega+t)}\,dx\,dt\cr
&&
-2\sigma \displaystyle\int_{Q}\omega\cdot A(x,t) (\phi b_{2})(x,t) (\phi\overline{b_{1}})(x,t)\,dx\,dt\cr
&&-2\sigma \displaystyle\int_{Q} \omega\cdot A(x,t) (\phi b_{2})(x,t)\overline{r_{1}}(x,t) e^{i\sigma(x\cdot \omega+t)}\,dx\,dt\cr
&&+2i\displaystyle\int_{Q}A(x,t)\cdot \nabla r_{2}(x,t)(\phi\overline{b_{1}})(x,t) e^{-i\sigma(x\cdot \omega+t)}\,dx\,dt\cr
&&+2i\displaystyle\int_{Q}A(x,t)\cdot \nabla r_{2}(x,t) \overline{r_{1}}(x,t)\,dx\,dt\cr
&=&-2\sigma\displaystyle\int_{Q}\omega\cdot A(x,t) \phi^2(x,t) b_A(x,t) \,dx\,dt+I_{\sigma},
\end{eqnarray}
where $b_A(x,t)=(b_2\overline{b_1})(x,t)=\exp\big(-i \displaystyle\int_{0}^t \omega\cdot A(y-s\omega,s)\,ds\big)$. Taking $\sigma$ large enough, we get
\begin{equation}\label{3.19}
\|u_{2}\overline{v}\|_{L^{1}(Q)}\leq C \|\varphi\|^2_{H^{3}(\R^{n})},\quad \mbox{and}\quad |I_{\sigma}|\leq C \|\varphi\|^2_{H^{3}(\R^{n})}.
\end{equation}
 Applying the trace theorem, one gets
\begin{eqnarray*}
\Big|\displaystyle\int_{\Sigma}(\widetilde{\Lambda}_{A_{2},q_{2}}-\widetilde{\Lambda}_{A_{1},q_{1}})(f)\,\overline{v}(x,t)\,d\sigma_x\,dt\Big|&\leq&
\|\widetilde{\Lambda}_{A_{2},q_{2}}-\widetilde{\Lambda}_{A_{1},q_{1}}\|\,\|f\|_{H^{1}(\Sigma)}\|\overline{v}\|_{L^{2}(\Sigma)}\cr
\cr
&\leq& \|\widetilde{\Lambda}_{A_{2},q_{2}}-\widetilde{\Lambda}_{A_{1},q_{1}}\|\,\|u_{2}-r_{2}\|_{H^{2}(Q)}\,\|\overline{v}-\overline{r_{1}}\|_{H^{1}(Q)}\cr
\cr
&\leq& C\sigma^{3}\|\widetilde{\Lambda}_{A_{2},q_{2}}-\widetilde{\Lambda}_{A_{1},q_{1}}\|\,\|\varphi\|^2_{H^{3}(\R^{n})},
\end{eqnarray*}
This combined with(\ref{3.18}) and (\ref{3.19}) give 
\begin{eqnarray*}
\Big|\displaystyle\int_{Q}\omega\cdot A(x,t) \phi^2(x,t) b_A(x,t)dx\,dt\Big|\leq C\Big(\sigma^{2}\|\widetilde{\Lambda}_{A_{2},q_{2}}-\widetilde{\Lambda}_{A_{1},q_{1}}\| +\frac{1}{\sigma} \Big)\,\|\varphi\|^2_{H^{3}(\R^{n})}.
\end{eqnarray*}
Bearing in mind that \color{black} $A=0$ outside $Q$, \color{black} and setting $y=x+t\omega$, $s=t-s$, we obtain for any  $\varphi\in \mathcal{C}^{\infty}_{0}(\mathscr{C}_r),$
\begin{eqnarray*}
\Big|\displaystyle\int_{0}^{T}\int_{\R^{n}}\omega\cdot A(y-t\omega,t) \varphi^2(y) b_A(y,t) \,dy\,dt\Big|
\leq C\Big(\sigma^{2}\|\widetilde{\Lambda}_{A_{2},q_{2}}-\widetilde{\Lambda}_{A_{1},q_{1}}\| +\frac{1}{\sigma} \Big)\|\varphi\|^2_{H^{3}(\R^{n})}.
\end{eqnarray*}
Using the fact that
\begin{multline*}
\int_{0}^{T}\!\int_{\R^{n}}\omega\cdot A (y-t\omega,t)\varphi^{2}(y)
b_A(y,t)\,dy\,dt
=i\displaystyle\int_{0}^{T}\!\int_{\R^{n}}\!\!\varphi^{2}(y)
\frac{d}{dt} b_A(y,t)\,dy\,dt\cr 
=i\int_{\R^{n}}\varphi^{2}(y)\Big(b_A(y,T)-1\Big)\,dy,
\end{multline*}
we obtain this estimation 
$$\Big|\int_{\R^{n}}\varphi^{2}(y)\Big( b_A(y,T)-1 \Big)\,dy  \Big|\leq C\Big(\sigma^{2}
\|\widetilde{\Lambda}_{A_{2},q_{2}}-\widetilde{\Lambda}_{A_{1},q_{1}}\|+\frac{1}{\sigma}
\Big)\|\varphi\|_{H^{3}(\R^{n})}^{2}.$$
The proof of the lemma is completed.
\end{proof}
\subsubsection{Estimate for the light-ray transform}
 For any $\omega\in\mathbb{S}^{n-1}$ and $f\in L^{1}(\R^{n+1})$, we define   the light-ray transform  of $f$ that we denote  $\mathcal{R}(f)$ as follows 
 
$$
\mathcal{R}(f)(x,\omega):=\int_{\R}f(x-t\omega,t)\,dt.
$$

 Our next objective  is to find an estimate that relats the light-ray transform of  $(\omega\cdot A)$ to the  DN map $\widetilde{\Lambda}_{A,q}$ on a precise subdomain. Using the previous result, one can estimate  the light-ray transform of $(\omega\cdot A)$ as given in this lemma:
\begin{Lemm}\label{Lemma 3.4}
  There exist positive constants $C,$ $\delta$, $\beta$ and $\sigma_{0}$ such that for any  $\omega\in
\mathbb{S}^{n-1},$ we have
$$|\mathcal{R}(\omega\cdot A)(y,\omega)|\leq C \Big( \sigma^{\delta}\|\widetilde{\Lambda}_{A_{2},q_{2}}-\widetilde{\Lambda}_{A_{1},q_{1}}\|+\frac{1}{\sigma^{\beta}}  \Big)
,\,\,\,\,\,\,\mbox{a.e.}\, y\in\R^{n},$$
for all $\sigma\geq\sigma_{0}$, where $C$ is depending depends only on $T$, 
$M$ and $\Omega$.
\end{Lemm}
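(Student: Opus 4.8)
The plan is to read off the light-ray transform pointwise from the integral inequality of Lemma~\ref{Lemma 3.3} by testing it against a family of functions that concentrate at a single point. First I record that, because $A$ has been extended by zero outside $Q$, the phase in $b_A$ is nothing but the light-ray transform: for every $y\in\R^n$ and $\omega\in\mathbb{S}^{n-1}$,
\[
b_A(y,T)=\exp\Big(-i\int_0^T\omega\cdot A(y-s\omega,s)\,ds\Big)=\exp\big(-i\,\mathcal{R}(\omega\cdot A)(y,\omega)\big).
\]
Set $g(y):=b_A(y,T)-1$. Since $A=A_1-A_2=\tfrac{i}{2}(V_1-V_2)$ is \emph{purely imaginary} with $\|A\|_{W^{3,\infty}(Q)}\le 2M$, the number $\mathcal{R}(\omega\cdot A)(y,\omega)$ is purely imaginary and bounded in modulus by $R:=TM$, so $b_A(y,T)=e^{\rho(y)}$ with $\rho$ real, $|\rho|\le R$, and the elementary inequality $|e^{\rho}-1|\ge c_R\,|\rho|$ furnishes the pointwise lower bound
\[
|g(y)|\ge c_R\,|\mathcal{R}(\omega\cdot A)(y,\omega)|.
\]
Moreover $y\mapsto\mathcal{R}(\omega\cdot A)(y,\omega)$ is Lipschitz (with constant $\le C(M,T)$, uniformly in $\omega$) because $A\in W^{1,\infty}$, whence $g$ is Lipschitz as well.

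Next I would localize. Fix $\chi\in\mathcal{C}^\infty_0(\R^n)$ supported in the unit ball with $\int_{\R^n}\chi^2=1$, and for a point $y_0\in\mathscr{C}_r$ and $h>0$ set $\varphi_h(y)=h^{-n/2}\chi\big((y-y_0)/h\big)$. For $h$ smaller than the distance from $y_0$ to $\p\mathscr{C}_r$ one has $\mathrm{supp}\,\varphi_h\subset\mathscr{C}_r$, so $\varphi_h$ is admissible in Lemma~\ref{Lemma 3.3}. A direct scaling computation gives $\int_{\R^n}\varphi_h^2=1$ and $\|\varphi_h\|_{H^3(\R^n)}^2\le C\,h^{-6}$, while the Lipschitz continuity of $g$ yields the averaging error
\[
\Big|\int_{\R^n}\varphi_h^2(y)\,g(y)\,dy-g(y_0)\Big|\le C\,h.
\]
Writing $\eta:=\|\widetilde{\Lambda}_{A_2,q_2}-\widetilde{\Lambda}_{A_1,q_1}\|$ and inserting these facts into Lemma~\ref{Lemma 3.3}, I obtain
\[
|g(y_0)|\le C\,h+C\Big(\sigma^{2}\eta+\tfrac1\sigma\Big)h^{-6}.
\]

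Finally I optimize in the localization scale $h$. Balancing the averaging error $h$ against the term $\sigma^{-1}h^{-6}$ leads to the choice $h=\sigma^{-1/7}$, which is admissible once $\sigma\ge\sigma_0$ (so that $h$ is small enough for both the $H^3$ bound and the support constraint). This turns the right-hand side into $C\big(\sigma^{20/7}\eta+\sigma^{-1/7}\big)$, and combining with the lower bound for $|g(y_0)|$ gives
\[
|\mathcal{R}(\omega\cdot A)(y_0,\omega)|\le C\Big(\sigma^{\delta}\,\eta+\frac{1}{\sigma^{\beta}}\Big),\qquad \delta=\tfrac{20}{7},\quad\beta=\tfrac17,
\]
for all $\sigma\ge\sigma_0$ and a.e.\ $y_0\in\mathscr{C}_r$, which is the range of $y$ entering the subsequent reconstruction; letting $\omega$ range over $\mathbb{S}^{n-1}$ completes the estimate. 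The main obstacle is the concentration step: one must simultaneously control the growth $\|\varphi_h\|_{H^3}^2\sim h^{-6}$ of the localizing family and the $O(h)$ error in replacing the average by the value at $y_0$, and then balance these against the two competing powers of $\sigma$ in Lemma~\ref{Lemma 3.3} so that the resulting exponents $\delta,\beta$ are strictly positive. A secondary point to handle carefully is the passage from the average to the pointwise bound, which is precisely why the Lipschitz regularity of $g$ (equivalently $A\in W^{1,\infty}$) is invoked rather than mere boundedness.
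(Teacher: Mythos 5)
Your argument is essentially the paper's own proof: the same mollifier concentration $\varphi_h(y)=h^{-n/2}\chi((y-y_0)/h)$ fed into Lemma~\ref{Lemma 3.3}, the same bounds $\|\varphi_h\|_{H^3}^2\le Ch^{-6}$ and $O(h)$ averaging error from Lipschitz continuity, the same balancing $h=\sigma^{-1/7}$, and the same use of the fact that $A$ is purely imaginary to pass from $|b_A(y,T)-1|$ to the ray transform via $|\rho|\le e^{R}|e^{\rho}-1|$. The one piece you omit is the case $y\notin\mathscr{C}_r$: the lemma asserts the bound for a.e.\ $y\in\R^{n}$, and the paper completes this by showing that for $|y|<r/2$ (resp.\ $|y|\ge T-r/2$) the ray $t\mapsto(y-t\omega,t)$ never meets $\mathcal{F}_r$ (resp.\ $\mathcal{B}_r$), hence never meets $\mathcal{I}_r^*\supset\operatorname{supp}A$, so that $\mathcal{R}(\omega\cdot A)(y,\omega)=0$ there; you should add this short geometric step rather than restrict the conclusion to $\mathscr{C}_r$.
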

\begin{proof}{}
We denote by  $\psi\in\mathcal{C}_{0}^{\infty}(\R^{n})$  a non-negative function that staisfies supp $\psi\subset B(0,1)$ and 
$\|\psi\|_{L^{2}(\R^{n})}=1$. We denote by $\varphi_h$ the mollifier function given by
\begin{equation}\label{3.20}
\varphi_{h}(x)=h^{-n/2}\psi\Big(\frac{x-y}{h}\Big),
\end{equation} for any  $y\in\mathcal{A}_{r}$. Therefore, for $h>0$ small small enough,  these identities hold
$$
\mbox{Supp}\,\varphi_{h}\cap\Omega=\emptyset,\,\,\,\,\,\mbox{and}\,\,\,\,\mbox{Supp}\,\varphi_{h}\pm T\omega\cap \Omega=\emptyset.
$$
On the other hand, we have
\begin{multline}\label{3.21}
\Big| b_A(y,T) -1 \Big|=\Big|
\displaystyle\int_{\R^{n}}\varphi_{h}^{2}(x)\Big( b_A(y,T)-1  \Big)\,dx\Big|\leq\Big| \displaystyle\int_{\R^{n}}\varphi_{h}^{2}(x)\Big(b_A(y,T)
-b_A(x,T)   \Big) \,dx\Big|\cr
 +\Big| \displaystyle\int_{\R^{n}} \varphi_{h}^{2}(x)\Big(b_A(x,T)-1 \Big)
 dx\Big|.
\end{multline}
Thus, using the fact that 
$$\begin{array}{lll} \Big|b_A(y,T)-b_A(x,T)\Big|  \leq C\,\Big|\displaystyle\int_{0}^{T}
i\omega\cdot A(y-s\omega,s) -i\omega\cdot A(x-s\omega,s)ds\Big|,
\end{array}$$
and since $$
\Big|\displaystyle\int_{0}^{T} \Big( i\,\omega\cdot A(y-s\omega,s)-i\, \omega\cdot A(x-s\omega,s)\Big) ds\Big|\leq C \,|y-x|,
$$
one gets from Lemma \ref{Lemma 3.3} with $\varphi=\varphi_{h}$ 
$$
\Big|b_A(y,T)-1
\Big|\leq C\int_{\R^{n}}\varphi_{h}^{2}(x)\,|y-x|\,dx+C\Big(
\sigma^{2}\|\widetilde{\Lambda}_{A_{2},q_{2}}-\widetilde{\Lambda}_{A_{1},q_{1}}\|+\frac{1}{\sigma}
\Big)\|\varphi_{h}\|^{2}_{H^{3}(\R^{n})}.
$$
Next, as the following estimates hold $$
\|\varphi_{h}\|_{H^{3}(\R^{n})}\leq C h^{-3}\,\,\,\,\,\,\,\mbox{and}\,\,\,\,\int_{\R^{n}}\varphi_{h}^{2}(x)|y-x|\,dx \leq C h,
$$
then, we obtain 
$$
 \Big|b_A(y,T)-1
\Big|\leq C \,h+C\Big(
\sigma^{2}\|\widetilde{\Lambda}_{A_{2},q_{2}}-\widetilde{\Lambda}_{A_{1},q_{1}}\|+\frac{1}{\sigma}
\Big)h^{-6}.
 $$
Now we select  $h$ small so that $h=1/\sigma\, h^{6}$, we find two positive constants $\delta$ and $\beta$
such that
$$\Big| b_A(y,T)-1 \Big|\leq
C \Big[
\sigma^{\delta}\|\widetilde{\Lambda}_{A_{2},q_{2}}-\widetilde{\Lambda}_{A_{1},q_{1}}\|+\frac{1}{\sigma^{\beta}}
\Big].$$
 Bearing in mind that  $A$ is a pure imaginary complex vector and that $|X|\leq e^{M}\,|e^{X}-1|$ for any real vector $X\in\R^n$ satisfying $|X|\leq M$, one gets 
$$\Big| -i\int_{0}^{T}\omega\cdot A(y-s\omega,s)\,ds \Big|\leq e^{M T}\Big| b_A(y,T)-1
\Big|.$$
Thus, we have for any $y\in \mathscr{C}_{r}$ and
$\omega\in\mathbb{S}^{n-1}$ 
$$\Big|\int_{0}^{T}i\,\omega\cdot A(y-s\omega,s)\,ds \Big|\leq C \Big(
\sigma^{\delta}\|\widetilde{\Lambda}_{A_{2},q_{2}}-\widetilde{\Lambda}_{A_{1},q_{1}}\|+\frac{1}{\sigma^{\beta}}
\Big).$$
 Now using the fact that  $A=A_{2}-A_{1}=0$ outside $\mathcal{I}_r^*$,
 \color{black} then for any $y\in
 \mathcal{A}_{r}$, and $\,\omega\in\mathbb{S}^{n-1}$, one gets 
\begin{equation}\label{3.22}
\left|\int_{\R}i\,\omega\cdot A(y-t\omega,t)\,dt\right|\leq\,C\,\Big(\sigma^{\delta}\|\widetilde{\Lambda}_{A_{2},q_{2}}-\widetilde{\Lambda}_{A_{1},q_{1}}\|+\frac{1}{\sigma^{\beta}}\Big).
\end{equation}
Besides, even if  $y\in B(0,r/2)$ then, $ |y-t\omega|\geq|t|-|y|\geq
|t|-\displaystyle\frac{r}{2}.$ This yields $(y-t\omega,t)\notin
\mathcal{F}_r$ for $t>r/2$. Moreover, as
$(y-t\omega,t)\notin \mathcal{F}_r$ if $t\leq
\displaystyle\frac{r}{2}$. Then, we have  $(y-t\omega,t)\notin
\mathcal{F}_r\supset \mathcal{I}_r^*$ for $t\in\R.$ This combined this with 
$A=A_{2}-A_{1}=0$ outside $\mathcal{I}_r^*$ \color{black} yields that for any $y\in B(0,r/2)$, one gets 
$$
A(y-t\omega,t)=0,\quad \forall t\in\R,\,\,\,\forall \omega\in\mathbb{S}^{n-1}.$$
Similarly, we show that for $|y|\geq T-r/2$,  we have 
$(y-t\omega,t)\notin \mathcal{B}_r\supset \mathcal{I}_r^*$ for $t\in\R$. Therefore,
$ A(y-t\omega,t)=0$. And we obatin this estimate
\begin{equation}\label{3.23}
\int_{\R}\omega\cdot A(y-t\omega,t)\,dt=0,\,\,\,\,\mbox{a.e.}\,\,y\notin \mathscr{C}_r,\,\,\,\omega\in
\mathbb{S}^{n-1}.
\end{equation}
From (\ref{3.22}) and (\ref{3.23}) we complete the proof and we find out 
$$
|\mathcal{R}(\omega\cdot A)(y,\omega)|=\left|\int_{\R}\omega\cdot A(y-t\omega,t)\,dt\right|\leq\,
C\,\Big(\sigma^{\delta}\|\widetilde{\Lambda}_{A_{2},q_{2}}-\widetilde{\Lambda}_{A_{1},q_{1}}\|
+\frac{1}{\sigma^{\beta}}\Big),
\,\,\,\mbox{a.e}.\,\,y\in\R^{n},\,\,\,\omega\in\mathbb{S}^{n-1}.
$$
This finishes the proof of the lemma.
\end{proof}
\subsubsection{The stability estimate}
 Let us  define  the Fourier transform $\widehat{G}$ of a function $G\in
L^{1}(\R^{n+1})$ as follows
$$
\widehat{G}(\xi,\tau)=\int_{\R}\int_{\R^{n}}G(x,t){e^{-ix\cdot\xi}}e^{-it\tau}\,dx\,dt.
$$
\begin{Lemm}\label{Lemma 3.5}
There exist positive constants  $C,\,\delta,\,\beta,$  and $\sigma_{0}$ such that we have 
\begin{equation}\label{3.24}
| \omega\cdot \widehat{A}(\xi,\tau) | \leq C\,\Big(\sigma^{\delta}\|\widetilde{\Lambda}_{A_{2},q_{2}}-\widetilde{\Lambda}_{A_{1},q_{1}}\|
+\frac{1}{\sigma^{\beta}}\Big), 
\end{equation}
for all $\sigma>\sigma_{0}$ and   $\omega\in\mathbb{S}^{n-1}$ satisfying $\omega\cdot \xi=\tau$\color{black}. 
 \end{Lemm}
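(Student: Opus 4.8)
The plan is to deduce the Fourier-side estimate (\ref{3.24}) from the pointwise bound on the light-ray transform already furnished by Lemma \ref{Lemma 3.4}, by means of a Fourier-slice identity. The crucial observation is that, for a fixed direction $\omega$, the $n$-dimensional Fourier transform in $y$ of the map $y\mapsto\mathcal{R}(\omega\cdot A)(y,\omega)$ coincides, along the hyperplane $\{\tau=\omega\cdot\xi\}$, with the full space-time Fourier transform of $\omega\cdot A$. Thus a uniform pointwise bound on the light-ray transform translates directly into a bound on $\omega\cdot\widehat{A}$ over that hyperplane.

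First I would record the support information established inside the proof of Lemma \ref{Lemma 3.4}: by (\ref{3.23}), for each fixed $\omega\in\mathbb{S}^{n-1}$ the function $g_\omega(y):=\mathcal{R}(\omega\cdot A)(y,\omega)$ vanishes for a.e. $y\notin\mathscr{C}_r$, so it is supported in the bounded annulus $\mathscr{C}_r$. Since $\mathscr{C}_r$ has finite Lebesgue measure depending only on $T$ and $\Omega$, and since Lemma \ref{Lemma 3.4} gives an upper bound for $|g_\omega(y)|$ that is uniform in $y$, the function $g_\omega$ lies in $L^1(\R^n)$ with a controlled norm. Next I would compute its spatial Fourier transform: starting from
$$\widehat{g_\omega}(\xi)=\int_{\R^n}e^{-iy\cdot\xi}\int_{\R}(\omega\cdot A)(y-t\omega,t)\,dt\,dy,$$
and performing the change of variables $x=y-t\omega$ (so that $y=x+t\omega$ and $dy=dx$), Fubini's theorem yields
$$\widehat{g_\omega}(\xi)=\int_{\R}\int_{\R^n}(\omega\cdot A)(x,t)\,e^{-ix\cdot\xi}\,e^{-it(\omega\cdot\xi)}\,dx\,dt=\omega\cdot\widehat{A}(\xi,\tau),\qquad \tau=\omega\cdot\xi,$$
which is exactly the space-time Fourier transform of $\omega\cdot A$ evaluated on the slice $\tau=\omega\cdot\xi$, in the convention fixed just before the lemma.

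It then remains to combine this identity with the trivial inequality $|\widehat{g_\omega}(\xi)|\leq\|g_\omega\|_{L^1(\R^n)}\leq|\mathscr{C}_r|\,\operatorname{ess\,sup}_{y}|g_\omega(y)|$ and to insert the estimate of Lemma \ref{Lemma 3.4}; the volume factor $|\mathscr{C}_r|$ is absorbed into the constant $C$, giving (\ref{3.24}) for all $\sigma\geq\sigma_0$ and all $(\xi,\tau)$ with $\omega\cdot\xi=\tau$, with the same exponents $\delta,\beta$ as in Lemma \ref{Lemma 3.4}.

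Because the heavy analytic work has already been carried out in Lemma \ref{Lemma 3.4}, I do not expect a genuine obstacle here. The only points requiring care are the justification of the compact support of $g_\omega$ (which is precisely (\ref{3.23})) and the interchange in the change of variables, both licensed by the compact support of the extended $A$ together with Fubini's theorem; the rest is a direct substitution.
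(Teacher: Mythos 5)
Your proposal is correct and follows essentially the same route as the paper: the change of variables $x=y-t\omega$ gives the Fourier--slice identity $\widehat{g_\omega}(\xi)=\omega\cdot\widehat{A}(\xi,\omega\cdot\xi)$, and the bounded support of the light-ray transform (the paper uses $B(0,r/2+T)$ where you use $\mathscr{C}_r$, both valid) lets the uniform pointwise bound of Lemma \ref{Lemma 3.4} pass to the $L^1$ norm and hence to the Fourier transform. No substantive difference from the paper's argument.
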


\begin{proof}
 Putting 
$x=y-t\omega$, we obtain  for any $\xi\in\R^{n}$ and
$\omega\in\mathbb{S}^{n-1}$ 
$$\begin{array}{lll}
\displaystyle\int_{\R^{n}}\mathcal{R}(\omega\cdot A)(y,\omega)\,{e^{-iy\cdot\xi}}\,dy&=&\displaystyle\int_{\R^{n}}\displaystyle\Big(\int_{\R}\omega\cdot A(y-t\omega,t)\,dt\Big)\,
{e^{-iy\cdot\xi}}\,dy\\
&=&\displaystyle\int_{\R}\int_{\R^{n}}\omega\cdot A(x,t)\,e^{-ix\cdot\xi}{e^{-it\omega\cdot\xi}}\,dx\,dt\\
&=&\omega\cdot\widehat{ A}{(\xi,\omega\cdot\xi)}
=\omega\cdot\widehat{ A}(\xi,\tau).
\end{array}$$
Here $(\xi,\tau)=(\xi,\omega\cdot\xi).$ 
 Since for all $t\in\R$, 
 $\mbox{Supp}\,\, A(\cdot,t)\subset\Omega\subset B(0,r/2)$,
 then we have 
 $$
 \int_{\R^{n}\cap B(0,\frac{r}{2}+T)}\mathcal{R}(\omega\cdot A)(\omega,y)\,{e^{-iy\cdot\xi}}\,dy=\omega\cdot\widehat{ A}(\xi,\tau), \qquad \tau=\omega\cdot \xi.
 $$
Thus, from Lemma \ref{Lemma 3.4}, one gets 
\begin{equation*}
|\omega\cdot \widehat{ A}(\xi,\tau)|\leq C\,\Big(\sigma^{\delta}\|\widetilde{\Lambda}_{A_{2},q_{2}}-\widetilde{\Lambda}_{A_{1},q_{1}}\|+\frac{1}{\sigma^{\beta}}\Big),
\end{equation*}
for any  $\xi\in\R^{n}$, $\omega\in\mathbb{S}^{n-1}$ such that  $\omega\cdot\xi=\tau$, and the proof is complete.
\end{proof}
\noindent Let us define the following set 
$$E=\Big\{(\xi,\tau)\in \R^n\setminus \{0_{R^n}\}\times \R,\,\,|\tau|\leq \frac{1}{2}|\xi|\Big\}.$$ 
We denote 
$$\widehat{\beta}_{j,k}(\xi,\tau)=\xi_j \widehat{a}_k(\xi,\tau)-\xi_k \widehat{a}_j(\xi,\tau),\qquad j,k=1,...,n.$$
\begin{Lemm}\label{Lemma 3.6}
There exist positive constants $C,$ $\delta$, $\beta$ and $\sigma_0$ such that we have 
$$|\widehat{\beta}_{j,k}(\xi,\tau)|\leq C \, \Big(\sigma^{\delta}\|\widetilde{\Lambda}_{A_{2},q_{2}}-\widetilde{\Lambda}_{A_{1},q_{1}}\|
+\frac{1}{\sigma^{\beta}}\Big),$$
for any $\sigma>\sigma_0$ and $(\xi,\tau)\in E\cap B(0,1)$.

\end{Lemm}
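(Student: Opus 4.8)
The plan is to reconstruct the (Fourier-transformed) magnetic field $\widehat{\beta}_{j,k}$ from the scalar data $\omega\cdot\widehat{A}(\xi,\tau)$ already controlled in Lemma \ref{Lemma 3.5}, by exploiting the fact that the only directional information missing from $\omega\cdot\widehat A$ is the component of $\widehat A$ parallel to $\xi$, which is exactly the part annihilated by the antisymmetrization defining $\beta_{j,k}$. In other words, $\omega\cdot\widehat A$ determines $\widehat A$ only modulo its $\xi$-parallel (gauge) direction, and $\widehat\beta_{j,k}$ depends solely on the \emph{transverse} part; this gauge observation is the conceptual heart of the argument and the place where the loss of the gradient part of $A$ is rendered harmless.

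Fix $(\xi,\tau)\in E\cap B(0,1)$, so that $\xi\neq 0$ and $|\tau|\le\tfrac12|\xi|$, and set $\rho=\sqrt{1-\tau^{2}/|\xi|^{2}}$; the constraint $|\tau|\le\tfrac12|\xi|$ guarantees $\rho\ge\sqrt{3}/2$, hence bounded away from $0$. For any \emph{real} unit vector $\eta$ with $\eta\cdot\xi=0$, the two vectors $\omega_{\pm}=\frac{\tau}{|\xi|^{2}}\xi\pm\rho\,\eta$ satisfy $|\omega_{\pm}|=1$ (the cross term vanishes since $\eta\perp\xi$) and $\omega_{\pm}\cdot\xi=\tau$, so both lie in $\mathbb{S}^{n-1}$ and are admissible in Lemma \ref{Lemma 3.5}. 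Applying the bound (\ref{3.24}) to each and subtracting (the common term $\frac{\tau}{|\xi|^{2}}\xi$ drops out), we obtain
\[
2\rho\,\bigl|\eta\cdot\widehat{A}(\xi,\tau)\bigr|=\bigl|(\omega_{+}-\omega_{-})\cdot\widehat{A}(\xi,\tau)\bigr|\le 2C\Bigl(\sigma^{\delta}\|\widetilde{\Lambda}_{A_{2},q_{2}}-\widetilde{\Lambda}_{A_{1},q_{1}}\|+\sigma^{-\beta}\Bigr).
\]
Dividing by $2\rho$ and using $\rho\ge\sqrt{3}/2$ yields a bound of the same form for $\bigl|\eta\cdot\widehat{A}(\xi,\tau)\bigr|$, valid for \emph{every} real unit vector $\eta\perp\xi$ (for $n=2$ there is a single such $\eta$ up to sign, which already suffices since $\xi^{\perp}$ is one-dimensional).

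I would then promote this directional control to a bound on the full transverse part of $\widehat A$. Writing $\widehat{A}^{\perp}=\widehat{A}-\frac{\widehat{A}\cdot\xi}{|\xi|^{2}}\xi$ for the projection of $\widehat A$ onto $\xi^{\perp}$ and splitting $\widehat{A}^{\perp}=P+iR$ with $P,R$ real vectors in $\xi^{\perp}$, the previous estimate applied to $\eta=P/|P|$ and $\eta=R/|R|$ gives $|P|,|R|\le C(\sigma^{\delta}\|\widetilde{\Lambda}_{A_{2},q_{2}}-\widetilde{\Lambda}_{A_{1},q_{1}}\|+\sigma^{-\beta})$, hence $|\widehat{A}^{\perp}|\le C(\sigma^{\delta}\|\widetilde{\Lambda}_{A_{2},q_{2}}-\widetilde{\Lambda}_{A_{1},q_{1}}\|+\sigma^{-\beta})$. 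Finally, since $\xi_{j}\xi_{k}-\xi_{k}\xi_{j}=0$ one checks that $\widehat{\beta}_{j,k}=\xi_{j}\widehat{a}_{k}-\xi_{k}\widehat{a}_{j}=\xi_{j}\widehat{a}_{k}^{\perp}-\xi_{k}\widehat{a}_{j}^{\perp}$, so $|\widehat{\beta}_{j,k}|\le 2|\xi|\,|\widehat{A}^{\perp}|$; on $B(0,1)$ we have $|\xi|\le 1$, and the claimed estimate follows with $C,\delta,\beta,\sigma_{0}$ inherited from Lemma \ref{Lemma 3.5}.

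The step I expect to carry the real weight is not any of the above computations but the geometric setup that makes them available: one needs two genuinely distinct unit vectors $\omega_{\pm}$ sharing the value $\omega_{\pm}\cdot\xi=\tau$, and their difference must be free to sweep out all of $\xi^{\perp}$. This is precisely what the restriction to the cone $E$ secures, since it keeps $\rho^{-1}$ uniformly bounded, while the restriction to $B(0,1)$ keeps the factor $|\xi|$ under control; outside these regions the construction of $\omega_{\pm}$ degenerates and the constants blow up.
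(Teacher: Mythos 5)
Your proof is correct, and it reaches the estimate by a route that differs from the paper's in one structural point: how the longitudinal (i.e.\ $\xi$-parallel) part of $\widehat{A}$ is removed. The paper picks the single unit vector $\omega=\frac{\tau}{|\xi|^2}\xi+\sqrt{1-\tau^2/|\xi|^2}\,\zeta_{k,j}$ with $\zeta_{k,j}$ the normalized vector $(\xi_j e_k-\xi_k e_j)/|\xi_j e_k-\xi_k e_j|$, and invokes the hypothesis $\mathrm{div}_x(A_1-A_2)=0$ (equivalently $\xi\cdot\widehat{A}(\xi,\tau)=0$) to conclude that $\omega\cdot\widehat{A}$ is directly proportional to $\widehat{\beta}_{j,k}$, after which Lemma \ref{Lemma 3.5} gives the bound with the explicit constant $4/\sqrt{3}$. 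You instead use the antipodal pair $\omega_{\pm}$ and cancel the longitudinal contribution by subtraction, so the divergence-free condition is never used at this step; you then bound the full transverse projection $\widehat{A}^{\perp}$ and observe that $\widehat{\beta}_{j,k}$ depends only on it. Both arguments rest on the same two facts that you correctly identify as the crux: the cone condition $|\tau|\leq\frac12|\xi|$ keeps $\rho=\sqrt{1-\tau^2/|\xi|^2}$ bounded below by $\sqrt{3}/2$, and $|\xi|\leq 1$ on $B(0,1)$ controls the reassembly factor. What your version buys is a slightly more robust statement (the estimate on $\widehat{\beta}_{j,k}$, i.e.\ on $\mathrm{curl}_x A$, holds without the gauge normalization, which is in any case only needed later to pass from $\mathrm{curl}_x A$ back to $A$); what the paper's version buys is brevity, since a single well-chosen $\omega$ already produces $\widehat{\beta}_{j,k}$ on the nose. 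Your handling of the complex-valued $\widehat{A}^{\perp}$ via the real and imaginary parts $P$ and $R$, and your remark about the one-dimensionality of $\xi^{\perp}$ when $n=2$, are both sound.
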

\begin{proof}
Let $(\xi,\tau)\in E\cap B(0,1)$ and $(e_k)_k$ be the canonical basis of $\R^n$. For $j,k=1,...,n,\,\,\,j\neq k$, we set
$$\zeta_{j,k}=\frac{\xi_k \,e_j-\xi_j\,e_k}{|\xi_k \,e_j-\xi_j\,e_k|}.$$
Let us consider a unit vector $\omega\in \mathbb{S}^{n-1}$ in this form $$\omega=\frac{\tau}{|\xi|^2}\,\,\xi+\sqrt{1-\frac{\tau^2}{|\xi|^2}}\,\, \zeta_{k,j},$$
with $j,k=1,...,n,\,\,\,j\neq k.$ One can easily see that $\omega\cdot \xi=\tau$. Since div$_{x}$ $(A_1-A_2)=0$ then we have 
$$\omega\cdot \widehat{A}(\xi,\tau)=\sqrt{1-\frac{\tau^2}{|\xi|^2}}\,\,\frac{\widehat{\beta}_{j,k}(\xi,\tau)}{|\xi_k\,e_j-\xi_j\,e_k|}.$$
 Thus, in light of Lemma \ref{Lemma 3.5},   one gets
 $$|\widehat{\beta}_{j,k}(\xi,\tau)|\leq \frac{2|\xi|}{\sqrt{1-\frac{\tau^2}{|\xi|^2}}}\,\,|\omega\cdot \widehat{A}(\xi,\tau)|\leq \frac{4}{\sqrt{3}} \, \Big(\sigma^{\delta}\|\widetilde{\Lambda}_{A_{2},q_{2}}-\widetilde{\Lambda}_{A_{1},q_{1}}\|
+\frac{1}{\sigma^{\beta}}\Big).$$
This completes the proof of the lemma.
\end{proof}
\noindent In what follows 
for $\rho
>0$ and $\kappa\in (\mathbb{N}\cup\{0\})^{n+1}$, we put the following notations
 $$
 |\kappa|=\kappa_{1}+...+\kappa_{n+1},\,\,\,\,\,\,\,\,\,B(0,\rho)=\{x\in\R^{n+1},\,\,|x|<\rho\}.
 $$
Let us recall the
following analytic result, which follows from \cite{[B&I]}.
\begin{Lemm}\label{Lemma 3.7}
Let $\mathcal{O}\subset B(0,1)\subset
\R^{d}$ be a non empty open set,  for $n\geq2$. We consider  an analytic function $G$ in $B(0,2),$ that
satisfies
$$\|\p^{\kappa}G\|_{L^{\infty}(B(0,2))}\leq \frac{M|\kappa|!}{(2\rho)^{|\kappa|}},\,\,\,\,\forall\,\kappa\in(\mathbb{N}\cup\{0\})^{n},$$
for some  $M>0$,  $\rho>0$ and $N=N(\rho)$. Then, we have
$$\|G\|_{L^{\infty}(B(0,1))}\leq N M^{1-\gamma}\|G\|_{L^{\infty}(\mathcal{O})}^{\gamma},$$
where $\gamma\in(0,1)$ depends on $d$, $\rho$ and $|\mathcal{O}|$.
\end{Lemm}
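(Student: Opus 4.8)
The plan is to deduce the estimate from a \emph{propagation of smallness} principle for functions holomorphic on a complex neighborhood, the passage to the complex domain being furnished by the quantitative analyticity bounds on $G$.

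\textbf{Step 1 (Complexification).} First I would use the bounds $\|\partial^\kappa G\|_{L^\infty(B(0,2))}\le M|\kappa|!/(2\rho)^{|\kappa|}$ to extend $G$ holomorphically to a complex neighborhood of $B(0,2)$. For $x\in B(0,2)$ one sets
\[
\widetilde G(z)=\sum_{\kappa}\frac{\partial^\kappa G(x)}{\kappa!}\,(z-x)^\kappa,\qquad z\in\C^d.
\]
Using the multinomial bound $|\kappa|!/\kappa!\le d^{|\kappa|}$ together with the hypothesis, the series converges absolutely for $|z-x|<\rho'$ with $\rho'\simeq\rho/d$ and, by geometric summation, satisfies $|\widetilde G(z)|\le 2M$ there. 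Thus $\widetilde G$ is a well defined holomorphic extension of $G$ to the tube $\mathcal V=\{z=x+iy:\ x\in B(0,2),\ |y|<\rho'\}$, with $\|\widetilde G\|_{L^\infty(\mathcal V)}\le 2M$. This uniform width $\rho'$ and the uniform bound $2M$ are the only two facts about $G$ that the rest of the argument uses.

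\textbf{Step 2 (Local three-ball inequality).} Next I would establish a local interpolation inequality: for any $a$ with $B(a,r_3)\subset B(0,2)$ and radii $0<r_1<r_2<r_3$, $r_3$ comparable to $\rho'$,
\[
\sup_{B(a,r_2)}|G|\ \le\ \Big(\sup_{B(a,r_3)}|G|\Big)^{1-\theta}\Big(\sup_{B(a,r_1)}|G|\Big)^{\theta},
\]
with $\theta=\theta(r_1,r_2,r_3)\in(0,1)$. This follows from Hadamard's three-circle theorem applied to the complexified lines $\zeta\mapsto a+\zeta\omega$, $\omega\in\mathbb{S}^{d-1}$: each $\zeta\mapsto\widetilde G(a+\zeta\omega)$ is holomorphic on a disc of radius $\rho'$, so $\log\max_{|\zeta|=r}|\widetilde G(a+\zeta\omega)|$ is convex in $\log r$; taking the supremum over $\omega$ gives the displayed log-convexity, hence the three-ball bound.

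\textbf{Step 3 (Chaining and conclusion).} Since $\mathcal O$ is a nonempty open subset of $B(0,1)$, I fix a ball $B(x_0,\eta)\subset\mathcal O$; its inradius $\eta$ is the only feature of $\mathcal O$ that enters, and its dependence on $|\mathcal O|$ is absorbed into the final constants. Because $B(0,1)$ is connected and relatively compact in $B(0,2)$, any point $p\in B(0,1)$ can be joined to $x_0$ by a finite chain of overlapping balls $B(a_0,r),\dots,B(a_N,r)\subset B(0,2)$ with $a_0=x_0$, $a_N=p$, radius $r\simeq\rho'$, and chain length $N=N(\rho,|\mathcal O|,d)$. Applying the three-ball inequality successively along the chain, and bounding the outer term at each step by the global constant $\|G\|_{L^\infty(B(0,2))}\le M$, I propagate the smallness from $B(x_0,\eta)\subset\mathcal O$ to $p$, obtaining $|G(p)|\le N\,M^{1-\gamma}\|G\|_{L^\infty(\mathcal O)}^{\gamma}$ with $\gamma=\theta^{N}$. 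Taking the supremum over $p\in B(0,1)$ yields the claim, and since $N$ and $\theta$ depend only on $d$, $\rho$ and $\eta$, the exponent $\gamma\in(0,1)$ depends only on $d$, $\rho$ and $|\mathcal O|$.

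\textbf{Main obstacle.} The delicate point is the bookkeeping in Step~3: each application of the three-ball inequality multiplies the exponent by $\theta<1$, so the resulting exponent $\gamma=\theta^N$ degrades with the number of balls $N$ needed to traverse $B(0,1)$ from $\mathcal O$. One must therefore bound $N$ purely in terms of the geometry (the dimension $d$, the analyticity radius $\rho$ fixing the admissible ball size $r\simeq\rho'$, and a lower bound on the inradius of $\mathcal O$ controlled by $|\mathcal O|$), so that $\gamma$ is a fixed number in $(0,1)$ independent of the particular function $G$. The companion technical requirement, feeding this argument, is the \emph{uniformity} of the complexification in Step~1, namely a fixed tube width $\rho'\simeq\rho/d$ on which $|\widetilde G|\le 2M$ holds everywhere; this is what makes $\theta$ and the radii in the chain uniform.
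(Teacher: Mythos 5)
The paper does not actually prove this lemma: it is quoted as a known quantitative analytic-continuation result and attributed to \cite{[B&I]} (where it is established via a harmonic-measure/two-constants argument in the complexification, in the spirit of Vessella's continuous-dependence theorem for analytic continuation). Your overall architecture --- complexify using the uniform derivative bounds, prove a local interpolation inequality, then chain from $\mathcal{O}$ across $B(0,1)$ --- is exactly the right one, and your Step 1 and Step 3 are sound (Step 1 gives a holomorphic extension $\widetilde G$ with $|\widetilde G|\le 2M$ on a tube of uniform width $\rho'\sim\rho/\sqrt d$, and the chaining bookkeeping with $\gamma=\theta^N$ is standard).

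The genuine gap is in Step 2. Hadamard's three-circle theorem applied to $\zeta\mapsto\widetilde G(a+\zeta\omega)$ compares the maxima of $|\widetilde G|$ over \emph{complex} circles $|\zeta|=r_1<r_2<r_3$. The inner quantity $\max_{|\zeta|=r_1}|\widetilde G(a+\zeta\omega)|$ runs over points $a+\zeta\omega$ with $\mathrm{Im}\,\zeta\neq 0$, so it is a supremum over a complex neighborhood of $a$, not over the real ball $B(a,r_1)$; it cannot be bounded by $\sup_{B(a,r_1)}|G|$. Since the only smallness you have at the start of the chain is on the \emph{real} set $\mathcal{O}$, the three-circle inequality as you invoke it never gets off the ground: you would need smallness of $\widetilde G$ on a complex ball around $x_0$, which is precisely what has to be proved. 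The correct local tool is the two-constants theorem: on the disc $D(0,\rho')$ one takes $E=[-r_1,r_1]\subset\R$ (a real segment, where $|f|\le\sup_{\mathcal O}|G|$) and uses the lower bound on the harmonic measure $\omega(\zeta,E,D(0,\rho')\setminus E)$ at real points $\zeta$ to get $|f(\zeta)|\le (2M)^{1-\theta}\bigl(\sup_E|f|\bigr)^{\theta}$; this does propagate smallness from a real set along real directions, and then your Step 3 chaining goes through verbatim. A secondary, minor point: in Step 3 you reduce the dependence on $|\mathcal O|$ to a dependence on the inradius $\eta$ of $\mathcal O$; a set of prescribed measure can have arbitrarily small inradius, so strictly speaking this yields $\gamma=\gamma(d,\rho,\eta)$ rather than $\gamma(d,\rho,|\mathcal O|)$ --- harmless for the way the lemma is used here, but worth stating honestly.
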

\noindent Let  $\alpha>0$ be fixed.  Setting 
$G_\alpha(\xi,\tau)=\widehat{\beta}_{j,k}(\alpha(\xi,\tau))$, for any 
 $(\xi,\tau)\in\R^{n+1}.$ It is clear that $G_\alpha$ is analytic and
we have
\begin{multline*}
|\p^{\kappa} G_
{\alpha}(\xi,\tau)|=\left|\p^{\kappa}
 \widehat{\beta}_{j,k}(\alpha(\xi,\tau))\right|=\left|\p^{\kappa}\displaystyle\int_{\R^{n+1}} \beta_{j,k}(x,t)\,{e^{-i\alpha
(t,x)\cdot(\xi,\tau)}}\,dx\,dt\right|\cr
=\left|\displaystyle\int_{\R^{n+1}} \beta_{j,k}(x,t)(-i)^{|\kappa|}\alpha^{|\kappa|}(x,t)^{\kappa}{e^{-i\alpha(x,t)\cdot(\xi,\tau)}}\,dx\,dt\right|.
\end{multline*}
This yields that
\begin{multline*}
|\p^{\kappa} G_ {\alpha}(\xi,\tau)|\leq \displaystyle\int_{\R^{n+1}}| \beta_{j,k}(x,t)| \alpha^{|\kappa|}(|x|^{2}+t^{2})^{\frac{|\kappa|}{2}}\,dx\,dt\cr
\leq\|\beta_{j,k}\|_{L^{1}(\mathcal{I}_r^*)}\,\,
\alpha^{|\kappa|}\,\,(2T^{2})^{\frac{|\kappa|}{2}}
\leq C \,\,\displaystyle\frac{|\kappa|!}{(T^{-1})^{|\kappa|}}\,\,e^{\alpha}.
\end{multline*}
Thus, from  Lemma \ref{Lemma 3.7} with $M=Ce^{\alpha}$, $2\rho=T^{-1},$
and $\mathcal{O}=\mathring{E}\cap B(0,1)$, there exists  $\mu\in(0,1)$ such that  for any $(\xi,\tau)\in B(0,1)$, we have 
$$
|G_\alpha(\xi,\tau)|=|\widehat{\beta}_{j,k}(\alpha(\xi,\tau))|\leq C e^{\alpha(1-\gamma)}\|G_\alpha\|_{L^{\infty}(\mathcal{O})}^{\gamma}.
$$
Bearing in min that  $\alpha\,\mathring{E}=\{\alpha(
\xi,\tau),\,(\xi,\tau)\in\mathring{E}\}=\mathring{E}$, we get for any 
$(\xi,\tau)\in B(0,\alpha)$
\begin{eqnarray*}
| \widehat{\beta}_{j,k}(\xi,\tau)|=|G_\alpha(\alpha^{-1}(\xi,\tau))|&\leq& C e^{\alpha(1-\gamma)}\,\|G_\alpha\|_{L^{\infty}(\mathcal{O})}^{\gamma}\cr
&\leq& C e^{\alpha(1-\gamma)} \|\,\widehat{\beta}_{j,k}\,\|^{\mu}_{L^{\infty}(B(0,\alpha)\cap\mathring{E})}\cr
&\leq&C e^{\alpha (1-\gamma)}\|\,\widehat{\beta}_{j,k}\,\|_{L^{\infty}(\mathring{E})}^{\gamma}.
\end{eqnarray*}
The objective  now is to find out a link between  $\beta_{j,k}$ and the DN map. Then we will decompose  the $H^{-1}$ norm of $\beta_{j,k}$ as follows
\begin{multline*}
\|\beta_{j,k}\|_{H^{-1}(\R^{n+1})}^{2/\gamma}=\Big(\displaystyle\int_{|(\tau,\xi)|<\alpha}(1+|(\tau,\xi)|^{2})^{-1}
|\widehat{\beta}_{j,k}|^{2}\, d\xi d\tau
+\int_{|(\xi,\tau)|\geq\alpha}(1+|(\tau,\xi)|^{2})^{-1}|\widehat{\beta}_{j,k}|^{2}\, d\xi d\tau\,\Big)^{1/\gamma}\cr
\leq  C\Big(\alpha^{n+1}\|\widehat{\beta}_{j,k}\|^{2}_{L^{\infty}(B(0,\alpha))}+\,\alpha^{-2}\|\beta_{j,k}\|^{2}_{L^{2}(\R^{n+1})}\Big)^{1/\gamma}.
\end{multline*}
Hence, from the previous lemma one gets
$$\begin{array}{lll}
\|\beta_{j,k}\|^{2/\gamma}_{H^{-1}(\R^{n+1})}&\leq&\,C\displaystyle\Big(\alpha^{{n+1}}\,e^{2\alpha(1-\gamma)}\,(\sigma^{\delta}
\|\widetilde{\Lambda}_{A_{2},q_{2}}-\widetilde{\Lambda}_{A_{1},q_{1}}\|
+\frac{1}{\sigma^{\beta}})^{
2\gamma}+\alpha^{-2}\Big)^{1/\gamma}\\
&\leq&C\displaystyle\Big(\alpha^{\frac{n+1}{\gamma}} \,e^{\frac{2\alpha(1-\gamma)}{\gamma}}
\sigma^{2\beta}\|\widetilde{\Lambda}_{A_{2},q_{2}}-\widetilde{\Lambda}_{A_{1},q_{1}}\|^{2}
+\alpha^{\frac{n+1}{\gamma}}\,e^{\frac{2\alpha(1-\gamma)}{\gamma}}\,\sigma^{-2\beta}+\alpha^{-2/\gamma}\Big).
\end{array}$$
Let us consider $\alpha_{0}>0$ be  large enough. Suppose that $\alpha>\alpha_{0}$.
Putting 
$$
\sigma=\alpha^{\frac{n+3}{2\mu\gamma}}\,e^{\frac{\alpha(1-\mu)}{\mu\gamma}}.
$$
Since   $\alpha>\alpha_{0},$ we have 
$\sigma>\sigma_{0}$ and
$\alpha^{\frac{n+1}{\gamma}}\,e^{\frac{2\alpha(1-\gamma)}{\gamma}}\,\sigma^{-2\beta}=\alpha^{-2/\gamma}.$
Which yields 
 $$\begin{array}{lll} \|\beta_{j,k}\|^{2/\gamma}_{H^{-1}(\R^{n+1})}&\leq&
C\Big(\alpha^{\frac{\beta(n+1)+\delta(n+3)}{\beta\gamma}}\,e^{\frac{2\alpha(\beta+\delta)(1-\gamma)}{\beta\gamma}}
\|\widetilde{\Lambda}_{A_{2},q_{2}}-\widetilde{\Lambda}_{A_{1},q_{1}}\|^{2}
+\alpha^{-2/\gamma}\Big)\\
&\leq&
C\,\displaystyle\Big(e^{N\alpha}\|\widetilde{\Lambda}_{A_{2},q_{2}}-\widetilde{\Lambda}_{A_{1},q_{1}}\|^{2}+\alpha^{-2/\gamma}\Big),
\end{array}$$
where $N$ depends on $\delta,\,\beta,\,n,$ and $\gamma$. By  minimizing  the right hand-side of the above estimation with respect to $\alpha$ we get for 
$0<\|\widetilde{\Lambda}_{A_{2},q_{2}}-\widetilde{\Lambda}_{A_{1},q_{1}}\|<m$,  the following estimate 
 $$
 \alpha=\frac{1}{N}|\,\log\|\widetilde{\Lambda}_{A_{2},q_{2}}-\widetilde{\Lambda}_{A_{1},q_{1}}\|\,|.
 $$
 This entails that 
\begin{eqnarray}\label{3.25}
\|\beta_{j,k}\|_{H^{-1}(\R^{n+1})}&\leq&
C\Big(\|\widetilde{\Lambda}_{A_{2},q_{2}}-\widetilde{\Lambda}_{A_{1},q_{1}}\|+|\,\log\|\widetilde{\Lambda}_{A_{2},q_{2}}-\widetilde{\Lambda}_{A_{1},q_{1}}\|\,|^{-2/\gamma}\Big)^{\gamma/2}\cr
&\leq&C\Big(\|\widetilde{\Lambda}_{A_{2},q_{2}}-\widetilde{\Lambda}_{A_{1},q_{1}}\|^{\gamma/2}+|\log\|\widetilde{\Lambda}_{A_{2},q_{2}}-\widetilde{\Lambda}_{A_{1},q_{1}}\||^{-1}\Big).
\end{eqnarray}
Thus, one gets 
$$\|\mbox{curl}_x(A_1)-\mbox{curl}_x(A_2)\|_{H^{-1}(\R^{n+1})}\leq C\Big(\|\widetilde{\Lambda}_{A_{2},q_{2}}-\widetilde{\Lambda}_{A_{1},q_{1}}\|^{\gamma/2}+|\log\|\widetilde{\Lambda}_{A_{2},q_{2}}-\widetilde{\Lambda}_{A_{1},q_{1}}\||^{-1}\Big).$$
Let us consider  $\delta>1$ and $k\in\mathbb{N}$ such that $k=n/2+2\delta$. By interpolation, it follows that 
\begin{multline*}
\|\mbox{curl}_x(A_1)-\mbox{curl}_x(A_2)\|_{L^{\infty}(\R^{n+1})}\leq C\|\mbox{curl}_x(A_1)-\mbox{curl}_x(A_2)\|_{H^{\frac{n}{2}+\delta}(\R^{n+1})}\cr
\leq C \,\|\mbox{curl}_x(A_1)-\mbox{curl}_x(A_2)\|_{H^{-1}(\R^{n+1})}^{1-\eta}\,\|\mbox{curl}_x(A_1)-\mbox{curl}_x(A_2)\|_{H^{k}(\R^{n+1})}^{\eta}\cr
\leq C\,\|\mbox{curl}_x(A_1)-\mbox{curl}_x(A_2)\|_{H^{-1}(\R^{n+1})}^{1-\eta}.
\end{multline*}
 Therefore, one can find $\mu\in (0,1)$  such that 
$$  \|\mbox{curl}_x(A_1)-\mbox{curl}_x(A_2)\|_{L^{\infty}(\R^{n+1})}\leq C\Big(\|\widetilde{\Lambda}_{A_{2},q_{2}}-\widetilde{\Lambda}_{A_{1},q_{1}}\|+|\log\|\widetilde{\Lambda}_{A_{2},q_{2}}-\widetilde{\Lambda}_{A_{1},q_{1}}\||^{-1}\Big)^{\mu}. $$
Then for all $p>n$, one have 
$$ \|\mbox{curl}_x(A_1)-\mbox{curl}_x(A_2)\|_{L^{\infty}((0,T),L^{p}(\Omega))}\leq C\Big(\|\widetilde{\Lambda}_{A_{2},q_{2}}-\widetilde{\Lambda}_{A_{1},q_{1}}\|+|\log\|\widetilde{\Lambda}_{A_{2},q_{2}}-\widetilde{\Lambda}_{A_{1},q_{1}}\||^{-1}\Big)^{\mu}. $$
From  Lemma B.5 in \cite{[Ibtissem]} and using the fact that div$_{x}$ ($A$)= div$_{x}$ $(A_1-A_2)=0$ we  obtain 
$$\|A\|_{L^{\infty}((0,T),W^{1,p}(\Omega))}\leq C\Big(\|\widetilde{\Lambda}_{A_{2},q_{2}}-\widetilde{\Lambda}_{A_{1},q_{1}}\|+|\log\|\widetilde{\Lambda}_{A_{2},q_{2}}-\widetilde{\Lambda}_{A_{1},q_{1}}\||^{-1}\Big)^{\mu}.$$
As a consequence, we have 
$$\|A\|_{L^{\infty}(\mathcal{I}_r^*)}\leq C\Big(\|\widetilde{\Lambda}_{A_{2},q_{2}}-\widetilde{\Lambda}_{A_{1},q_{1}}\|+|\log\|\widetilde{\Lambda}_{A_{2},q_{2}}-\widetilde{\Lambda}_{A_{1},q_{1}}\||^{-1}\Big)^{\mu}.$$
\subsection{Stability for the electric potential}\label{subsection 3.3}
In this section, we aim to show that the time-dependent electric potential can be stably recovered from the Dirichlet to Neumann map $\widetilde{\Lambda}_{A,q}$.   Let us show  the following  preliminary statement. 
\begin{Lemm}\label{Lemma 3.8} There exist postive constants $C$, $\delta$, and $\sigma_{0}$ such that we  have 
$$
 \Big| \mathcal{R}(q)(y,\omega)\Big|\leq  C
 \Big(\sigma^{\delta}\|\widetilde{\Lambda}_{A_{2},q_{2}}-\widetilde{\Lambda}_{A_{1},q_{1}}\|+\sigma^{\delta}\|A\|_{L^{\infty}(\R^{n+1})}
+\frac{1}{\sigma^{\beta}}    \Big).
$$
for  all $\sigma>\sigma_{0},$ where $C$ is depending only on $T$, $\Omega$ and $M$.
\end{Lemm}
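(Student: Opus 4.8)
The plan is to recycle the same geometric optics solutions $u_2$ and $v$ and the integral identity already obtained in the proof of Lemma \ref{Lemma 3.3}, but now to isolate the \emph{zeroth order} (in $\sigma$) contribution rather than the leading one. Combining (\ref{3.17}) and (\ref{3.18}) and solving for the $q$-term gives
\begin{multline*}
\int_Q q\,u_2\overline{v}\,dx\,dt = -2\sigma\int_Q\omega\cdot A\,\phi^2 b_A\,dx\,dt + I_\sigma \\
+ \int_\Sigma(\widetilde{\Lambda}_{A_1,q_1}-\widetilde{\Lambda}_{A_2,q_2})(f)\,\overline{v}\,d\sigma_x\,dt + \int_Q V_A\,u_2\overline{v}\,dx\,dt,
\end{multline*}
with $V_A=A_2^2-A_1^2$. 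First I would replace $u_2\overline v$ on the left by its principal part $\phi^2 b_A$; every remaining piece carries a factor $r_1$ or $r_2$ and is therefore $O(\sigma^{-1})\|\varphi\|_{H^3}^2$ by (\ref{3.11}) and (\ref{3.14}), and none of these pieces carries $A$, which is exactly why the resulting $\sigma^{-1}$ error has no magnetic factor. This produces an identity for $\int_Q q\,\phi^2 b_A$ in terms of the four quantities above.

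Next I would estimate the right-hand side term by term. The boundary integral is handled exactly as in Lemma \ref{Lemma 3.3}, through the trace theorem, by $C\sigma^{3}\|\widetilde{\Lambda}_{A_2,q_2}-\widetilde{\Lambda}_{A_1,q_1}\|\,\|\varphi\|_{H^3}^2$. The crucial observation is that the surviving leading term $-2\sigma\int_Q\omega\cdot A\,\phi^2 b_A$, every summand of $I_\sigma$, and $V_A=-(A_1-A_2)(A_1+A_2)$ all carry an explicit factor $A=A_1-A_2$; a term-by-term inspection (the $\sigma$ in the leading piece, and in the $\sigma$-weighted summand of $I_\sigma$, cancels the $\sigma^{-1}$ coming from the correctors) shows that they are all bounded by $C\sigma\|A\|_{L^\infty(\R^{n+1})}\|\varphi\|_{H^3}^2$. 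Collecting everything yields
\begin{equation*}
\left|\int_Q q\,\phi^2 b_A\,dx\,dt\right|\leq C\Big(\sigma^{3}\|\widetilde{\Lambda}_{A_2,q_2}-\widetilde{\Lambda}_{A_1,q_1}\|+\sigma\|A\|_{L^\infty(\R^{n+1})}+\tfrac{1}{\sigma}\Big)\|\varphi\|_{H^3(\R^n)}^2 .
\end{equation*}

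It then remains to convert the left-hand side into the light-ray transform of $q$, and here I would mimic Lemma \ref{Lemma 3.4}. After the substitution $y=x+t\omega$ one recognizes $\int_Q q\,\phi^2 b_A=\int_0^T\int_{\R^n} q(y-t\omega,t)\varphi^2(y)\,b_A(y,t)\,dy\,dt$. Choosing $\varphi=\varphi_h$, the mollifier of (\ref{3.20}) centred at a point $y$, and using $\|\varphi_h\|_{H^3}\leq C h^{-3}$ together with $\int\varphi_h^2(x)|y-x|\,dx\leq C h$, the Lipschitz continuity of $q$, and the elementary bound $|b_A-1|\leq C\|A\|_{L^\infty}$ (which strips off the $b_A$ factor at the cost of one further $\|A\|_{L^\infty}$ term), one extracts $\int_{\R}q(y-t\omega,t)\,dt=\mathcal{R}(q)(y,\omega)$ up to errors $O(h)+O(\|A\|_{L^\infty})$. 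Balancing $h$ against $\sigma$, say $h=\sigma^{-1/7}$ so that $\sigma^{-1}h^{-6}=h$, then produces exponents $\delta,\beta>0$ of the stated form. Finally, whenever the light ray $t\mapsto(y-t\omega,t)$ misses the support $\mathcal{I}_r^*$ of $q$ the transform vanishes identically, so the bound holds for a.e.\ $y\in\R^n$.

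The main obstacle, and the reason the term $\sigma^{\delta}\|A\|_{L^\infty}$ is present, is that—unlike the magnetic case, where one \emph{divides} by $\sigma$ and the zeroth order contribution is negligible—here the $\sigma$-order magnetic term $-2\sigma\int_Q\omega\cdot A\,\phi^2 b_A$ does \emph{not} cancel and cannot be absorbed into a lower order remainder. One is forced to keep it as an error proportional to $\sigma\|A\|_{L^\infty(\R^{n+1})}$. This is harmless for the overall scheme precisely because the stability estimate for $A$ has already been proved in Subsection \ref{subsection 3.3}'s predecessor, so this term is later reabsorbed; the only delicate point within the present lemma is to check that the $A$-dependence of $I_\sigma$ and of $V_A$ is genuinely \emph{linear} in $A$ (rather than merely bounded by $M$), which is what keeps the single power $\|A\|_{L^\infty}$ in front and gives the estimate its final shape.
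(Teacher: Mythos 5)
Your proposal is correct and follows essentially the same route as the paper: isolate $\int_Q q\,\phi^2 b_A$ from the orthogonality identity, bound the boundary term by $\sigma^3\|\widetilde{\Lambda}_{A_2,q_2}-\widetilde{\Lambda}_{A_1,q_1}\|$ via the trace theorem and the magnetic contributions by $\sigma\|A\|_{L^\infty}$, strip the factor $b_A$ at the cost of another $\|A\|_{L^\infty}$, then mollify with $\varphi_h$ and balance $h=h^{-6}/\sigma$ before invoking the support of $q$. The only cosmetic difference is that you expand $\int_Q 2iA\cdot\nabla u_2\,\overline{v}$ via (\ref{3.18}) to exhibit the $-2\sigma\int\omega\cdot A\,\phi^2 b_A$ term explicitly, whereas the paper bounds that integral as a whole inside its remainder $I_\sigma$; the resulting estimate is identical.
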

\begin{proof}
From (\ref{3.17}) one can see that 
$$\begin{array}{lll}
\displaystyle\int_{Q}\!\!\!&q(x,t)&\!\!\!u_{2}(x,t)\overline{v}(x,t)\,dx\,dt=\displaystyle\int_{\Sigma}(\widetilde{\Lambda}_{A_{1},q_{1}}-\widetilde{\Lambda}_{A_{2},q_{2}})(f) \overline{v}(x,t)\,d\sigma_x\,dt \\
&&+\displaystyle\int_{Q}2iA\cdot \nabla u_{2}(x,t)\overline{v}(x,t)\,dx\,dt+\displaystyle\int_{Q}V_{A}(x,t)u_{2}(x,t)\overline{v}(x,t)\,dx\,dt,
\end{array}$$
with  $V_{A}=(A_{2}^{2}-A_{1}^{2})$. We replace  $u_{2}$ and $v$ by their expressions,  we find out that  
$$\begin{array}{lll}
\displaystyle\int_{Q}q(x,t)u_{2}(x,t)\overline{v}(x,t)\,dx\,dt&=&\displaystyle\int_{Q}q(x,t)\phi^{2}(x,t)b_A(x,t)\,dx\,dt\\
&&+\displaystyle\int_{Q}q(x,t) \phi
(x,t)b_{2}(x,t)e^{i\sigma(x\cdot\omega+t)}\overline{r_{1}}(x,t)\,dx\,dt\\
&&+\displaystyle\int_{Q}q(x,t) {\phi}(x,t)\overline{b_{1}}(x,t)e^{-i\sigma(x\cdot\omega+t)}r_{2}(x,t)\,dx\,dt\\
&&+\displaystyle\int_{Q}q(x,t)r_{2}(x,t)\,\overline{r_{1}}(x,t)\,dx\,dt.
\end{array}$$
Thus , we get 
\begin{equation}\label{3.26}
\int_{Q}q(x,t)\phi^{2}(x,t)b_A(x,t)\,dx\,dt=\displaystyle\int_{\Sigma}(\widetilde{\Lambda}_{A_{1},q_{1}}-\widetilde{\Lambda}_{A_{2},q_{2}})(f) \overline{v}(x,t)\,d\sigma_x\,dt+I_{\sigma},
\end{equation}
where $I_{\sigma}$ denotes the following  quantity
$$\begin{array}{lll}
I_{\sigma}&=&\displaystyle\int_{Q}2iA\cdot \nabla u_{2}(x,t)\overline{v}(x,t)\,dx\,dt-\displaystyle\int_{Q}V_{A}(x,t)u_{2}(x,t)\overline{v}(x,t)\,dx\,dt\\
&&-\displaystyle\int_{Q}q(x,t)\phi(x,t)b_{2}(x,t)e^{i\sigma(x\cdot \omega+t)}\overline{r}_{1}(x,t)\,dx\,dt-\displaystyle\int_{Q}q(x,t) r_{2}(x,t) \overline{r_{1}}(x,t)\,dx\,dt
\\
&&-\displaystyle\int_{Q}q(x,t){\phi}(x,t) \overline{b_{1}}(x,t)e^{-i\sigma(x\cdot \omega+t)}r_{2}(x,t)\,dx\,dt.
\end{array}$$
For $\sigma$ large enough, we obtain 
\begin{equation}\label{3.27}
|I_{\sigma}|\leq C \Big(\frac{1}{\sigma}+\sigma \|A\|_{L^{\infty}(\mathcal{I}_r^*)} \Big)\|\varphi\|^2_{H^{3}(\R^{n})}.
\end{equation}
 From the trace theorem, we get 
\begin{equation}\label{3.28}
\Big| \displaystyle\int_{\Sigma}(\widetilde{\Lambda}_{A_{1},q_{1}}-\widetilde{\Lambda}_{A_{2},q_{2}})(f) \overline{v}(x,t)\,d\sigma_x\,dt   \Big|\leq C\sigma^{3}\|\widetilde{\Lambda}_{A_{1},q_{1}}-\widetilde{\Lambda}_{A_{2},q_{2}}\|\|\varphi\|^2_{H^{3}(\R^{n})}.
\end{equation}
Therefore, using  (\ref{3.26}), (\ref{3.27}) and (\ref{3.28}) one gets  
 \begin{multline*}
 \Big| \int_{Q} q(x,t)\phi^{2}(x,t)b_A(x,t)\,dx\,dt \Big|
 \leq C\Big( \sigma^{3}\|\widetilde{\Lambda}_{A_{1},q_{1}}-\widetilde{\Lambda}_{A_{2},q_{2}}\|+\sigma\|A\|_{L^{\infty}(\R^{n+1})}+\frac{1}{\sigma}  \Big)\|\varphi\|^2_{H^{3}(\R^{n})}.
 \end{multline*}
Bearing in mind that  $q=0$ outside $Q$, then by setting $y=x+t\omega$, $s=t-s$ we obtain 
\begin{multline}
\Big|\displaystyle\int_{0}^{T}\int_{\R^{n}}q(y-t\omega,t)\varphi^{2}(y)b_A(y,t)\,dy\,dt   \Big|\cr
\leq C\Big( \sigma^{3}\|\widetilde{\Lambda}_{A_{1},q_{1}}-\widetilde{\Lambda}_{A_{2},q_{2}}\|+\sigma\|A\|_{L^{\infty}(\R^{n+1})}+\displaystyle\frac{1}{\sigma}  \Big)\|\varphi\|^{2}_{H^{3}(\R^{n})}.
\end{multline}
This combined with $$\begin{array}{lll}
\Big|\displaystyle\int_{0}^{T}\int_{\R^{n}}q(y-t\omega,t)\varphi^{2}(y)\Big(1-b_A(y,t) \Big)  \,dy\,dt  \Big|
\leq C\|A\|_{L^{\infty}(\R^{n+1})}\|\varphi\|_{H^{3}(\R^{n})}^{2},
\end{array}$$
yields the following estimation  
$$\Big|\int_{0}^{T}\int_{\R^{n}}q(y-t\omega,t)\varphi^{2}(y)\,dy\,dt \Big|\leq \Big( \sigma^{3}\|\widetilde{\Lambda}_{A_{1},q_{1}}-\widetilde{\Lambda}_{A_{2},q_{2}}\|+\sigma\|A\|_{L^{\infty}(\R^{n+1})}+\displaystyle\frac{1}{\sigma}  \Big)\|\varphi\|^{2}_{H^{3}(\R^{n})}.$$
Next, we  consider  the sequence  $(\varphi_{h})_{h}$ defined in the previous section with
$y\in \mathscr{C}_r$. We get 
\begin{multline*}
\Big|\displaystyle\int_{0}^{T}q(y-t\omega,t) dt\Big|=\Big|\displaystyle\int_{0}^{T}\int_{\R^{n}}q(y-t\omega,t)\,\varphi_{h}^{2}(x)\,dx\,dt
\Big|\cr
\leq \Big| \displaystyle\int_{0}^{T}\!\!\!\int_{\R^{n}}\!\!q(x-t\omega,t)\varphi_{h}^{2}(x)dx\,dt\Big|
+\Big| \displaystyle\int_{0}^{T}\int_{\R^{n}}\Big(q(y-t\omega,t)-q(x-t\omega,t)\Big)\varphi_{h}^{2}(x)dx\,dt \Big|.
\end{multline*}
Thus,  bearing in mind that 
$|q(y-t\omega,t)-q(x-t\omega,t)|\leq C |y-x|$, one gets 
\begin{multline*}
\Big|\displaystyle\int_{0}^{T}q(y-t\omega,t)dt\Big| \leq
C\Big(\sigma^{3}\|\widetilde{\Lambda}_{A_{1},q_{1}}-\widetilde{\Lambda}_{A_{2},q_{2}}\|+\sigma\|A\|_{L^{\infty}(\R^{n+1})}+\displaystyle\frac{1}{\sigma}
\Big)
\|\varphi_{h}\|^{2}_{H^{3}(\R^{n})}\cr
+C\displaystyle\int_{\R^{n}}\!\!|x-y|\varphi_{h}^{2}(x)dx.
\end{multline*}
Now using the fact that  $\|\varphi_{h}\|_{H^{3}(\R^{n})}\leq C h^{-3}$
 and  $ \displaystyle\int_{\R^{n}} |x-y|
\varphi_{h}^{2}(x)\,dx\leq \,C \,h$, we thus deduce that 
$$
\Big|\int_{0}^{T}q(y-t\omega,t)\,dt\Big|\leq C \Big( \sigma^{3}\|\widetilde{\Lambda}_{A_{2},q_{2}}-\widetilde{\Lambda}_{A_{1},q_{1}}\|
+\sigma\|A\|_{L^{\infty}(\R^{n+1})} +\frac{1}{\sigma}\Big)h^{-6}+C\,h.
$$ Choosing  $h$ small enough so that   $h=h^{-6}/\sigma$, we we find out  $\delta>0$ and $\beta>0$ such that
$$
\Big|\int_{0}^{T}q(y-t\omega,t) \Big|\leq C
\Big(\sigma^{\delta}\|\widetilde{\Lambda}_{A_{2},q_{2}}-\widetilde{\Lambda}_{A_{1},q_{1}}\|+\sigma^{\delta}\|A\|_{L^{\infty}(\R^{n+1})}
+\frac{1}{\sigma^{\beta}}    \Big).
$$
Since  $q=q_{2}-q_{1}=0$
outside of $\mathcal{I}_r^*$, we then deduce that for any 
 $y\in\mathcal{A}_{r}$ and $\omega\in\mathbb{S}^{n-1}$,
 $$
 \Big| \int_{\R}q(y-t\omega,t)\,dt \Big|\leq  C
 \Big(\sigma^{\delta}\|\widetilde{\Lambda}_{A_{2},q_{2}}-\widetilde{\Lambda}_{A_{1},q_{1}}\|+\sigma^{\delta}\|A\|_{L^{\infty}(\R^{n+1})}
+\frac{1}{\sigma^{\beta}}    \Big).
$$
Next, by arguing  as before and using the fact that $q=0$ outside $\mathcal{I}_r^*$, we conclude that for all $y\in\R^{n}$,
 $$
 \Big| \mathcal{R}(q)(y,\omega)\Big|\leq  C
 \Big(\sigma^{\delta}\|\widetilde{\Lambda}_{A_{2},q_{2}}-\widetilde{\Lambda}_{A_{1},q_{1}}\|+\sigma^{\delta}\|A\|_{L^{\infty}(\R^{n+1})}
+\frac{1}{\sigma^{\beta}} \Big).
$$
This completes the proof of the lemma.
\end{proof}
Our objective  is to find an
estimat for the Fourier transform of $q$. Thus, proceeding as before, we find out: 
\begin{Lemm}\label{Lemma 3.9}
There exist positive constants 
$C$, $\delta$, $\beta$ and $\sigma_{0}$, such that we have 
$$|\widehat{q}(\xi,\tau)|\leq C \Big(\sigma^{\delta}\|\widetilde{\Lambda}_{A_{2},q_{2}}-\widetilde{\Lambda}_{A_{1},q_{1}}\|+\sigma^{\delta}\|A\|_{L^{\infty}(\R^{n+1})}
+\frac{1}{\sigma^{\beta}}\Big),\,\,\,\,\,\,\,\mbox{a.\,e}, (\xi,\tau)\in
E,$$ for all $\sigma>\sigma_{0}$, where $C$ is depending  only on  $T$, $\Omega$ and 
$M$.
\end{Lemm}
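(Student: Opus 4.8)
The plan is to recover $\widehat q(\xi,\tau)$ on the cone $E$ directly from the light-ray transform estimate of Lemma \ref{Lemma 3.8}, in exactly the same way the magnetic analogue was obtained from Lemma \ref{Lemma 3.4} in the proof of Lemma \ref{Lemma 3.5}. The starting point is the elementary identity tying the light-ray transform to the Fourier transform: for $\omega\in\mathbb{S}^{n-1}$ and $\xi\in\R^n$, the change of variables $x=y-t\omega$ gives
$$\int_{\R^n}\mathcal{R}(q)(y,\omega)\,e^{-iy\cdot\xi}\,dy=\int_{\R}\int_{\R^n}q(x,t)\,e^{-ix\cdot\xi}\,e^{-it\,\omega\cdot\xi}\,dx\,dt=\widehat q(\xi,\omega\cdot\xi).$$
Since $q=q_2-q_1$ vanishes outside $\mathcal{I}_r^*$, so that $\mbox{supp}\,q(\cdot,t)\subset\Omega\subset B(0,r/2)$ for every $t$, the $y$-integration is effectively over the bounded set $B(0,r/2+T)$ and every integral above converges absolutely.

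Next I would exploit the geometry built into the definition of $E$. For a fixed $(\xi,\tau)\in E$ one has $|\tau|\le\tfrac12|\xi|$, hence $|\tau|/|\xi|\le\tfrac12<1$, and there exists a unit vector $\omega\in\mathbb{S}^{n-1}$ with $\omega\cdot\xi=\tau$: it suffices to choose $\omega$ so that $\omega\cdot(\xi/|\xi|)=\tau/|\xi|$, which is admissible precisely because the right-hand side lies in $[-1,1]$. With such an $\omega$ the identity above specializes to $\widehat q(\xi,\tau)=\int_{\R^n}\mathcal{R}(q)(y,\omega)\,e^{-iy\cdot\xi}\,dy$.

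Finally I would insert the bound of Lemma \ref{Lemma 3.8}. Because $\mathcal{R}(q)(\cdot,\omega)$ is supported in the fixed ball $B(0,r/2+T)$ and is there dominated, uniformly in $\omega$, by $C\big(\sigma^{\delta}\|\widetilde{\Lambda}_{A_2,q_2}-\widetilde{\Lambda}_{A_1,q_1}\|+\sigma^{\delta}\|A\|_{L^{\infty}(\R^{n+1})}+\sigma^{-\beta}\big)$, integrating $|e^{-iy\cdot\xi}|=1$ over this set only multiplies the estimate by the constant $|B(0,r/2+T)|$. This yields, for all $\sigma>\sigma_0$ and a.e. $(\xi,\tau)\in E$,
$$|\widehat q(\xi,\tau)|\le C\Big(\sigma^{\delta}\|\widetilde{\Lambda}_{A_2,q_2}-\widetilde{\Lambda}_{A_1,q_1}\|+\sigma^{\delta}\|A\|_{L^{\infty}(\R^{n+1})}+\frac{1}{\sigma^{\beta}}\Big),$$
which is the claimed estimate. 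The genuinely hard analytic work is already contained in Lemma \ref{Lemma 3.8}; the only point here requiring care is the solvability of $\omega\cdot\xi=\tau$ with $\omega\in\mathbb{S}^{n-1}$, and this is exactly why the cone condition $|\tau|\le\tfrac12|\xi|$ is imposed in the definition of $E$. (By continuity of $\widehat q$, the bound in fact holds for every, not merely almost every, $(\xi,\tau)\in E$.)
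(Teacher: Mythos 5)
Your argument is correct and is precisely the one the paper intends: the paper proves Lemma \ref{Lemma 3.9} by the remark ``proceeding as before,'' i.e.\ by repeating the change of variables $x=y-t\omega$ used in the proof of Lemma \ref{Lemma 3.5} to convert the light-ray transform bound of Lemma \ref{Lemma 3.8} into a bound on $\widehat q(\xi,\omega\cdot\xi)$, together with the compact support of $q$ and the solvability of $\omega\cdot\xi=\tau$ on the cone $E$. Your explicit verification of that last solvability point is a welcome detail the paper leaves implicit, but the route is the same.
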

Next, from the above estimation as well as the analytic continuation
argument, we upper bound the Fourier transform
of $q$ in a suitable ball $B(0,\alpha)$  as follows
\begin{equation}\label{3.29} |
\widehat{q}(\xi,\tau)|\leq Ce^{\alpha
(1-\gamma)}\Big(\sigma^{\delta}\|\widetilde{\Lambda}_{A_{2},q_{2}}-\widetilde{\Lambda}_{A_{1},q_{1}}\|
+\sigma^{\delta}\|A\|_{L^{\infty}(\R^{n+1})}+\frac{1}{\sigma^{\beta}}
\Big)^{\gamma},
\end{equation}
 for some $\gamma\in(0,1)$ and where $\alpha>0$ is
assumed to be sufficiently large.  
In order to get an estimate
linking  $q$ to the DN measurement 
$\widetilde{\Lambda}_{A_{2},q_{2}}-\widetilde{\Lambda}_{A_{1},q_{1}}, $ we need to upper bound  the
$H^{-1}(\R^{n+1})$ norm of $q$ as follows
$$\begin{array}{lll}
\|q\|^{\frac{2}{\gamma}}_{H^{-1}(\R^{n+1})}\leq C\Big[\alpha^{n+1}\|\widehat{q}\|^{2}_{L^{\infty}(B(0,\alpha))}
+\alpha^{-2}\|q\|^{2}_{L^{2}(\R^{n+1})}   \Big]^{\frac{1}{\gamma}}.
\end{array}$$
Thus, from  (\ref{3.29}), one gets 
\begin{equation}\label{3.30}
\|q\|_{H^{-1}(\R^{n+1})}^{\frac{2}{\gamma}}\leq C \Big[
\alpha^{\frac{n+1}{\gamma}}e^{\frac{2\alpha(1-\gamma)}{\gamma}}\para{\sigma^{2\delta}\epsilon^{2}+\sigma^{2\delta}
\|A\|^{2}_{L^{\infty}(\R^{n+1})}+\sigma^{-2\beta}}+\alpha^{\frac{-2}{\gamma}}
\Big],
\end{equation}
where we have set $\epsilon=\|\widetilde{\Lambda}_{A_{2},q_{2}}-\widetilde{\Lambda}_{A_{1},q_{1}}\|$. From Theorem \ref{Theorem 1.1}, we obtain 
$$
\|q\|^{\frac{2}{\gamma}}_{H^{-1}(\R^{n+1})}\leq C \Big[ \alpha^{\frac{n+1}{\gamma}}e^{\frac{2\alpha(1-\gamma)}{\gamma}}\para{\sigma^{2\delta}\epsilon^{2}
+\sigma^{2\delta}\epsilon^{2\mu_{1}\mu_{2}}+\sigma^{2\delta}|\log \,\epsilon|^{-2\mu_{2}}+\sigma^{-2\beta}}+\alpha^{-\frac{2}{\gamma}}
 \Big],
$$
for some $\gamma,\,\mu_{1},\,\mu_{2}\in(0,1)$ and $\delta,\,\beta>0$. Let us consider 
$\alpha_{0}>0$ large  enough. Taking  $\alpha>\alpha_{0}$ and putting 
$$
 \sigma=\alpha^{\frac{n+3}{2\gamma\beta}} e^{\frac{\alpha(1-\gamma)}{\gamma\beta}}.
 $$
By taking $\alpha>\alpha_{0}$, one can assume that $\sigma>\sigma_{0}$. Thus,  from (\ref{3.30}) we have  
$$
\|q\|^{\frac{2}{\gamma}}_{H^{-1}(\R^{n+1})}\leq C\Big[ e^{N\alpha}\para{\epsilon^{2}+\epsilon^{2s}+|\log\epsilon|^{-2\mu_{2}}}+\alpha^{-\frac{2}{\gamma}} \Big],
$$
for some $s,\,\mu_{1},\,\mu_{2}\in(0,1)$. Here  $N$ depends on $n,\,\gamma,\delta$ and $\beta$. Then, for  $\epsilon$ small enough, one gets 
\begin{equation}\label{3.31}
\|q\|^{\frac{2}{\gamma}}_{H^{-1}(\R^{n+1})}\leq C\Big( e^{N\alpha}|\log
\epsilon|^{-2\mu_{2}}+\alpha^{-\frac{2}{\gamma}}  \Big).
\end{equation}
Choosing $\alpha$ as follows
$$\alpha=\frac{1}{N} \log|\log\epsilon|^{\mu_{2}},$$
where  $\epsilon <c\leq1$. Then, the quantity (\ref{3.31}) yields 
$$\|q\|_{H^{-1}(\mathcal{I}_r^*)}\leq \|q\|_{H^{-1}(\R^{n+1})}\leq C \Big( \log |\log \|\widetilde{\Lambda}_{A_{2},q_{2}}-\widetilde{\Lambda}_{A_{1},q_{1}}\||^{\mu_{2}}
\Big)^{-1}. $$
Now, we consider  $\delta>1$ and $k\in\mathbb{N}$ so that it satisfies $k=\frac{n}{2}+2\delta$. By interpolation, we obtain 
$$\begin{array}{lll}
\|q\|_{L^{\infty}(\R^{n+1})}&\leq&C\|q\|_{H^{\frac{n}{2}+\delta}(\R^{n+1})}\\
&\leq& C \,\|q\|_{H^{-1}(\R^{n+1})}^{1-\eta}\,\|q\|_{H^{k}(\R^{n+1})}^{\eta}\\
&\leq&C\,\|q\|_{H^{-1}(\R^{n+1})}^{1-\eta}.
\end{array}$$
 Therefore, there exists $\mu'\in (0,1)$  such that 
 $$\|q\|_{L^{\infty}(\mathcal{I}_r^*)}\leq C \Big( \log |\log \|\widetilde{\Lambda}_{A_{2},q_{2}}-\widetilde{\Lambda}_{A_{1},q_{1}}\||^{\mu_{2}}
\Big)^{-\mu'}, $$
 which finishes  the proof of Theorem \ref{Theorem 2.2}.
\subsection{Proof of Theorem\ref{Theorem 1.1}}
At this stage we are well prepared to deal with the problem under consideration, that is the identification of $V$ and $p$ appearing in (\ref{1.1}) from the knowledge of $\Lambda_{V,p}$. Based on Lemma 2.1 and Theorem 2.2 one can easily see the desired result.
\section{Determination of the unknown  terms from boundary observation  and final conditions}\label{Section 4}
In this section, we are going to show that the velocity field $V$ and the electric potential $p$ can be stably recovered in a larger region $\mathcal{I}_{r}^\sharp\supset \mathcal{I}_{r}^*$ by enlarging the set of  data. To state things clearly, in the present section,  our observations are given by the following  operator
$$\begin{array}{ccc}
\mathcal{R}_{V,p}:\mathscr{H}^{1}_{0}(\Sigma)&\longrightarrow&\mathcal{K}\\
\,\,\,\,\,\,\,\,\,\,\,f&\longmapsto&(\p_{\nu}u,u(\cdot,T),\p_{t}u(\cdot,T)),
\end{array}$$
associated with the equation (\ref{1.1}) with $(u_{0},u_{1})=(0,0)$. In view of Lemma \ref{Lemma 2.1}, this equivalently amounts to showing  Theorem \ref{Theorem 2.2}. That is to show that  $A$ and $q$ can be determined in  $\mathcal{I}_{r}^\sharp\supset \mathcal{I}_{r}^*$ \color{black} from the knowledge of the equivalent operator $\widetilde{\mathcal{R}}_{A,q}$ given by
$$\begin{array}{ccc}
\widetilde{\mathcal{R}}_{A,q}:\mathscr{H}^{1}_{0}(\Sigma)&\longrightarrow&\mathcal{K}\\
\,\,\,\,\,\,\,\,\,\,\,f&\longmapsto&((\p_{\nu}+i A\cdot \nu)u,u(\cdot,T),\p_{t}u(\cdot,T)),
\end{array}$$
associated with the equation (\ref{2.2}) with $(u_{0},u_{1})=(0,0)$. In order to establish  Theorem \ref{Theorem 1.2}, we shall consider the geometric
optics solutions constructed before with a
function $\varphi$ satisfying supp $\varphi\cap \Omega=\emptyset.$ Note that
this time, we have more flexibility on the support of  $\varphi$
so we don't need to assume that supp $\varphi\pm
T\omega$ and $\Omega$ are disjoint anymore. Let us denote by 
$$
\widetilde{\mathcal{R}}^{1}_{A,q}(f)=(\p_{\nu}+i A\cdot\nu)u,\,\,\,\,\,\widetilde{\mathcal{R}}^{2}_{A,q}(f)=u(\cdot,T),\,\,\,\,\,\,\widetilde{\mathcal{R}}_{A,q}^{3}(f)=\p_{t}u(\cdot,T).
$$ 
Our first aim to stably retrieve the magnetic potential with the known divergence in a larger region of the domain by adding the final data of the solution $u$ of the wave operator. 
\begin{Lemm}\label{Lemma4.1}
 We consider a function 
$\varphi\in\mathcal{C}^{\infty}_{0}(\R^{n})$ satisfying 
$\varphi\cap\Omega=\emptyset$. There exists  a positive constant $C$, such that we have 
$$\Big|\int_{\R^{n}}\varphi^{2}(y)\Big( b_A(y,T)-1\Big) \,dy  \Big|\leq C\Big(\sigma^{2}
\|\widetilde{\mathcal{R}}_{A_{2},q_{2}}-\widetilde{\mathcal{R}}_{A_{1},q_{1}}\|+\frac{1}{\sigma}
\Big)\|\varphi\|_{H^{3}(\R^{n})}^{2},$$  for any
$\omega\in\mathbb{S}^{n-1}$. Here $C$ depends only on  $T$, $\Omega$ and $M$.
\end{Lemm}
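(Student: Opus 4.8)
The plan is to follow the proof of Lemma \ref{Lemma 3.3} almost verbatim; the one genuinely new point is that, having dropped the hypothesis $(\mbox{supp}\,\varphi\pm T\omega)\cap\Omega=\emptyset$, the adjoint geometric optics solution $v$ no longer vanishes at the final time $t=T$, so the integration by parts produces extra boundary terms on $\Omega\times\{T\}$ which I absorb into the final-data components $\widetilde{\mathcal{R}}^2_{A,q}$ and $\widetilde{\mathcal{R}}^3_{A,q}$ of the response operator.

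First, since $\mbox{supp}\,\varphi\cap\Omega=\emptyset$, Lemma \ref{Lemma 3.1} furnishes a solution $u_2=\phi\, b_2\, e^{i\sigma(x\cdot\omega+t)}+r_2$ of $\mathcal{H}_{A_2,q_2}u=0$ with $u_2(\cdot,0)=\p_t u_2(\cdot,0)=0$. Setting $f=u_{2|\Sigma}$, I let $u_1$ solve $\mathcal{H}_{A_1,q_1}u_1=0$ with the same vanishing initial data and boundary value $f$, and put $u=u_1-u_2$, so that, exactly as in (\ref{3.16}), $u$ solves $\mathcal{H}_{A_1,q_1}u=2iA\cdot\nabla u_2+V_A u_2-q\,u_2$ with $u(\cdot,0)=\p_t u(\cdot,0)=0$ and $u_{|\Sigma}=0$, where $A=A_1-A_2$, $q=q_1-q_2$ and $V_A=A_2^2-A_1^2$. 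By Lemma \ref{Lemma 3.2} I take $v=\phi\, b_1\, e^{i\sigma(x\cdot\omega+t)}+r_1$ solving $\mathcal{H}^*_{A_1,q_1}v=0$; I stress that the correction $r_1$ still satisfies $r_1(\cdot,T)=\p_t r_1(\cdot,T)=0$, even though $v(\cdot,T)$ itself need not vanish now.

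Multiplying the equation for $u$ by $\overline{v}$ and integrating by parts through (\ref{FG}) together with one integration by parts in $t$, the $t=0$ contributions drop out (since $u$ has vanishing Cauchy data at $t=0$) and, as $u_{|\Sigma}=0$, on $\Sigma$ only the conormal term $(\widetilde{\mathcal{R}}^1_{A_1,q_1}-\widetilde{\mathcal{R}}^1_{A_2,q_2})(f)$ survives (using, as in the setup preceding Lemma \ref{Lemma 3.3}, that $A_1=A_2$ on $\Sigma$). The novelty, compared to (\ref{3.17}), is the surviving final-time term $\int_\Omega\big[\p_t u(\cdot,T)\overline{v}(\cdot,T)-u(\cdot,T)\p_t\overline{v}(\cdot,T)\big]\,dx$, in which I recognize $u(\cdot,T)=(\widetilde{\mathcal{R}}^2_{A_1,q_1}-\widetilde{\mathcal{R}}^2_{A_2,q_2})(f)$ and $\p_t u(\cdot,T)=(\widetilde{\mathcal{R}}^3_{A_1,q_1}-\widetilde{\mathcal{R}}^3_{A_2,q_2})(f)$. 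Inserting the explicit forms of $u_2$ and $v$ exactly as in the expansion (\ref{3.18}), the leading contribution remains $-2\sigma\int_Q\omega\cdot A\,\phi^2 b_A\,dx\,dt$ and all remaining volume terms are $O(\|\varphi\|^2_{H^3(\R^n)})$.

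It then remains to bound the boundary terms. The $\Sigma$-term is controlled by $C\sigma^3\|\widetilde{\mathcal{R}}_{A_1,q_1}-\widetilde{\mathcal{R}}_{A_2,q_2}\|\,\|\varphi\|^2_{H^3(\R^n)}$ via the trace theorem, precisely as in Lemma \ref{Lemma 3.3}. For the final-time term I use the $\mathcal{K}$-bound $\|u(\cdot,T)\|_{H^1(\Omega)}+\|\p_t u(\cdot,T)\|_{L^2(\Omega)}\leq\|\widetilde{\mathcal{R}}_{A_1,q_1}-\widetilde{\mathcal{R}}_{A_2,q_2}\|\,\|f\|_{H^1(\Sigma)}\leq C\sigma^2\|\widetilde{\mathcal{R}}_{A_1,q_1}-\widetilde{\mathcal{R}}_{A_2,q_2}\|\,\|\varphi\|_{H^3(\R^n)}$ (the last step by the trace of $u_2$, using $r_{2|\Sigma}=0$), together with the fact that, because $r_1(\cdot,T)=\p_t r_1(\cdot,T)=0$, one has $v(\cdot,T)=\phi\,b_1\,e^{i\sigma(x\cdot\omega+T)}$ and $\p_t v(\cdot,T)=\big[\p_t(\phi b_1)+i\sigma\,\phi b_1\big]e^{i\sigma(x\cdot\omega+T)}$, whence $\|v(\cdot,T)\|_{L^2(\Omega)}\leq C\|\varphi\|_{H^3(\R^n)}$ and $\|\p_t v(\cdot,T)\|_{L^2(\Omega)}\leq C\sigma\|\varphi\|_{H^3(\R^n)}$. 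Consequently the dominant final-time contribution, $\int_\Omega u(\cdot,T)\p_t\overline{v}(\cdot,T)\,dx$, is $O(\sigma^3\|\widetilde{\mathcal{R}}_{A_1,q_1}-\widetilde{\mathcal{R}}_{A_2,q_2}\|\,\|\varphi\|^2_{H^3(\R^n)})$, i.e.\ of the same order as the $\Sigma$-term. Dividing the identity by $2\sigma$, changing variables $y=x+t\omega$ and using the telescoping $\tfrac{d}{dt}b_A=-i\,\omega\cdot A(y-t\omega,t)\,b_A$ exactly as in Lemma \ref{Lemma 3.3} yields the claimed bound. The main obstacle is the careful accounting of these new final-time terms and checking that their $\sigma$-growth is no worse than $\sigma^3$; this is precisely where the vanishing Cauchy data of $r_1$ at $t=T$, provided by Lemma \ref{Lemma 3.2}, is essential, since it keeps $v(\cdot,T)$ and $\p_t v(\cdot,T)$ equal to their explicit principal parts.
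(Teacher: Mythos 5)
Your argument follows the paper's proof of Lemma \ref{Lemma4.1} essentially verbatim: the same geometric optics solutions $u_2$ and $v$, the same integration by parts identifying the surviving final-time terms with $\widetilde{\mathcal{R}}^2$ and $\widetilde{\mathcal{R}}^3$, and the same trace-theorem bounds $\|u_{2|\Sigma}\|_{H^2(Q)}\leq C\sigma^2\|\varphi\|_{H^3}$ and $\|\p_t v(\cdot,T)\|_{L^2(\Omega)}\leq C\sigma\|\varphi\|_{H^3}$ leading to the $\sigma^2\|\widetilde{\mathcal{R}}_{A_2,q_2}-\widetilde{\mathcal{R}}_{A_1,q_1}\|$ term after dividing by $\sigma$. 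The accounting of the $\sigma$-growth of the new final-time contributions is correct and matches the paper's estimate of $\|\psi_\sigma\|_{\mathcal{K}}$.
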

\begin{proof}{}
From Lemma \ref{Lemma 3.1}, one can find  a geometrical optic solution
$u_{2}$ to
$$
 \mathcal{H}_{A_2,q_2}u_{2}=0 \quad  \mbox{in}\,\,\,Q, \quad 
   u_{2|t=0}=\p_{t}u_{2|t=0}=0 \quad \mbox{in}\,\,\,\Omega,
$$
having  this form
\begin{equation}\label{4.32}
u_{2}(x,t)=\varphi(x+t\omega)b_2(x,t)e^{i\sigma(x\cdot\omega+t)}+r_{2}(x,t),
\end{equation}
 Here the error term 
$r_2$ obeys(\ref{3.11}) and $b_2$ is defined by
$b_2(x,t)= \exp\Big( i\displaystyle\int_{0}^t  \omega\cdot A_2(x+(t-s)\omega,s)\Big)$.  
Let $u_{1}$ be the solution to  
$\mathcal{H}_{A_1,q_1}u_{1}=0,\,\,  \mbox{in}\,\,\,Q, $
with initial data 
    $u_{1|t=0}=\p_{t}u_{1|t=0}=0\,\, \mbox{in}\,\,\,\Omega$ and the boundary condition $  u_{1}=f:=u_{2|\Sigma}$ on $\Sigma$.
We put  $u=u_{1}-u_2$. Thus,  $u$ solves the  following system
\begin{equation}\label{4.33}
\left\{
  \begin{array}{ll}
   \mathcal{H}_{A_1,q_1}u=2 iA\cdot \nabla u_2+V_A u_2-q u_2 & \mbox{in}\,\,\,Q, \\
\\
    u(\cdot,0)=\p_{t}u(\cdot,0)=0 & \mbox{in}\,\,\,\Omega, \\
\\
    u=0 & \mbox{on}\,\,\,\Sigma,
  \end{array}
\right.
\end{equation}
 with $A=A_{1}-A_{2}$ and $q=q_{1}-q_{2}$. From Lemma \ref{Lemma 3.2}, there exists  a
geometrical optic solution $v$ to the adjoint problem
$
\mathcal{H}^*_{A_1,q_1}v=0 \,\,\mbox{in}\,\,\,Q,
$ in the
form
\begin{equation}\label{4.34}
v(x,t)=\overline{\varphi}(x+t\omega)b_1(x,t)e^{i\sigma(x\cdot\omega+t)}+r_{1}(x,t),
\end{equation}
 where $r_{1}$
satisfies (\ref{3.14}). We multiply  (\ref{4.33}) with $\overline{v}$ and integrate by parts, we find out in  light  of (\ref{FG})
\begin{multline*}\label{EQ4.43}
\int_{Q}2i A(x,t)\cdot \nabla u_2\, \overline{v}\,dx dt=\int_{\Sigma}(\widetilde{\mathcal{R}}^{1}_{A_{2},q_{2}}-\widetilde{\mathcal{R}}^{1}_{A_{1},q_{1}})(f)\overline{v}(x,t)d\sigma_x dt-\int_{\Omega}\!(\widetilde{\mathcal{R}}^{3}_{A_{2},q_{2}}-\widetilde{\mathcal{R}}^{3}_{A_{1},q_{1}})(f)\overline{v}(x,T)\,dx \cr
-\int_{\Omega}(\widetilde{\mathcal{R}}^{2}_{A_{2},q_{2}}-\widetilde{\mathcal{R}}^{2}_{A_{1},q_{1}})(f)\p_t
\overline{v}(x,T)-\int_{Q}(V_A(x,t)-q(x,t))u_2(x,t)\overline{v}(x,t)\,dx\,dt.
\end{multline*}
We  replace $u_2$ and $v$ by their expressions, and we proceed  as in the proof of Lemma \ref{Lemma 3.3}, we use  (\ref{3.14})
and the Cauchy-Schwartz inequality, we obtain
\begin{multline*}
\Big| \int_{\R^n}\varphi^2(y)\Big(b_A(y,T)-1 \Big)\,dy  \Big|
\leq  \frac{C}{\sigma}\Big( \|\widetilde{\mathcal{R}}_{A_2,q_2}-\widetilde{\mathcal{R}}_{A_1,q_1}\|\|u_{2|\Sigma}\|_{H^1(\Sigma)}\|\psi_{\sigma}\|_{\mathcal{K}}+ \|\varphi\|^2_{H^3(\R^n)} \Big),
\end{multline*}
where 
$\psi_{\sigma}=\displaystyle\para{u^{-}_{|\Sigma},\,u^{-}(\cdot,T),\,\p_{t}u^{-}(\cdot,T)}$. 
From the trace theorem and since 
$$\|u_{2|\Sigma}\|_{H^{2}(Q)}\leq C \sigma^2 \|\varphi\|_{H^3(\R^n)}, \quad\mbox{and}\quad  \|\psi_\sigma\|_{H^1(Q)\times L^{2}(\Omega)\times L^2(\Omega)}\leq C \sigma \|\varphi\|_{H^3(\R^n)},$$
one gets 
$$\Big| \int_{\R^n} \varphi^2(y) \,\,\Big( b_A(y,T)-1\Big)\,dy\Big|\leq C\Big(\sigma^{2}
\|\widetilde{\mathcal{R}}_{A_{2},q_{2}}-\widetilde{\mathcal{R}}_{A_{1},q_{1}}\|
+\frac{1}{\sigma}\Big)\|\varphi\|^{2}_{H^{3}(\R^{n})},$$ which completes the proof of the Lemma.  
\end{proof}
 Next, we just need to apply Lemma \ref{Lemma4.1} to the function  $\varphi_{h}$ given by (\ref{3.20}) with $y\notin\Omega$,
 use the fact that   $A=A_{1}-A_{2}=0$ outside $\mathcal{I}_{r}^\sharp$ and  proceed  as in the previous section to end up proving that the magnetic potential $A$ stably depends on the Neumann measurement $\widetilde{\mathcal{R}}_{A,q}$ and we have 
 $$\|A_1-A_2\|_{L^\infty(\mathcal{I}_{r}^\sharp)}\leq C |\log \|\widetilde{\mathcal{R}}_{A_2,q_2}-\widetilde{\mathcal{R}}_{A_1,q_1}\| |^{-\mu_1}\color{black}. $$
Our next focus is to show that the electric potential $q$  can also be stably recovered
in the region $\mathcal{I}_{r}^\sharp$, from the knowledge of the response given by the operator
$\widetilde{\mathcal{R}}_{A,q}$ and of course by the use of the $\log$ type stability estimate we have already obtained for the magnetic potential. 
\begin{Lemm}\label{Lemma4.2}
Let $\omega\in\mathbb{S}^{n-1}$ and $\varphi\in
\mathcal{C}_{0}^{\infty}(\R^{n})$ satisfying $\mbox{supp}\,\varphi\cap\Omega=\emptyset$.  There exists a positive constant
$C$ such that
 we have $$
\Big|  \displaystyle\int_{0}^{T}\!\!\!\displaystyle\int_{\R^{n}} q(y-t\omega,t) \varphi^{2}(y)
\,dy\,dt\Big|
\leq
C\Big(\sigma^{3}\|\widetilde{\mathcal{R}}_{A_{2},q_{2}}-\widetilde{\mathcal{R}}_{A_{1},q_{1}}\|+\sigma\|A\|_{L^{\infty}(\mathcal{I}_{r}^\sharp)}+\displaystyle\frac{1}{\sigma}
\Big)
\|\varphi\|^{2}_{H^{3}(\R^{n})}.
 $$
Here $C$ depends only on $\Omega$, $M$ and $T$.
\end{Lemm}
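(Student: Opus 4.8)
The plan is to follow the strategy of Lemma~\ref{Lemma 3.8}, replacing the DN map by the enriched response operator $\widetilde{\mathcal{R}}_{A,q}$ and carrying along the final-time observations. First I would start from the integration-by-parts identity established in the proof of Lemma~\ref{Lemma4.1}, obtained by multiplying the equation (\ref{4.33}) satisfied by $u=u_1-u_2$ with $\overline{v}$ and invoking Green's formula (\ref{FG}). Solving this identity for the zeroth-order term, one isolates
$$\int_Q q\,u_2\,\overline{v}\,dx\,dt = \mathcal{T}_{\mathcal{R}} + \int_Q 2iA\cdot\nabla u_2\,\overline{v}\,dx\,dt + \int_Q V_A\,u_2\,\overline{v}\,dx\,dt,$$
where $V_A=A_2^2-A_1^2$ and $\mathcal{T}_{\mathcal{R}}$ collects the three response-operator difference terms, namely the lateral pairing $\int_\Sigma(\widetilde{\mathcal{R}}^1_{A_2,q_2}-\widetilde{\mathcal{R}}^1_{A_1,q_1})(f)\,\overline{v}\,d\sigma_x\,dt$ together with the two final-time pairings against $\overline{v}(\cdot,T)$ and $\p_t\overline{v}(\cdot,T)$.

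Next I would substitute the geometric optics forms (\ref{4.32}) for $u_2$ and (\ref{4.34}) for $v$. The principal part of $\int_Q q\,u_2\,\overline{v}$ is $\int_Q q\,\phi^2 b_A\,dx\,dt$, with $\phi(x,t)=\varphi(x+t\omega)$ and $b_A=b_2\overline{b_1}$; every remaining contribution contains at least one correction term $r_1$ or $r_2$ and is gathered into a remainder $I_\sigma$. Using the energy bounds (\ref{3.11}) and (\ref{3.14}), the correction terms are $O(\sigma^{-1})$, while the two magnetic contributions $\int_Q 2iA\cdot\nabla u_2\,\overline{v}$ and $\int_Q V_A u_2\,\overline{v}$ are controlled by $C\sigma\|A\|_{L^\infty(\mathcal{I}_r^\sharp)}$, the factor $\sigma$ coming from differentiating the oscillatory phase $e^{i\sigma(x\cdot\omega+t)}$; hence $|I_\sigma|\leq C(\sigma^{-1}+\sigma\|A\|_{L^\infty(\mathcal{I}_r^\sharp)})\|\varphi\|^2_{H^3(\R^n)}$.

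For the term $\mathcal{T}_{\mathcal{R}}$ I would use the duality estimate for $\widetilde{\mathcal{R}}$ together with the trace theorem, exactly as in Lemma~\ref{Lemma4.1}: the stimulation $f=u_{2|\Sigma}$ obeys $\|u_{2|\Sigma}\|_{H^2(Q)}\leq C\sigma^2\|\varphi\|_{H^3(\R^n)}$, while the adjoint data triple $(\overline{v}|_\Sigma,\overline{v}(\cdot,T),\p_t\overline{v}(\cdot,T))$ has $\mathcal{K}$-norm bounded by $C\sigma\|\varphi\|_{H^3(\R^n)}$, so that $|\mathcal{T}_{\mathcal{R}}|\leq C\sigma^3\|\widetilde{\mathcal{R}}_{A_2,q_2}-\widetilde{\mathcal{R}}_{A_1,q_1}\|\,\|\varphi\|^2_{H^3(\R^n)}$. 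Finally, performing the change of variables $y=x+t\omega$ and using the elementary inequality $|b_A(y,t)-1|\leq C\|A\|_{L^\infty(\mathcal{I}_r^\sharp)}$ to replace the weight $b_A$ by $1$ converts the principal integral into $\int_0^T\!\int_{\R^n} q(y-t\omega,t)\varphi^2(y)\,dy\,dt$, at the cost of an extra term already absorbed in $C\sigma\|A\|_{L^\infty(\mathcal{I}_r^\sharp)}\|\varphi\|^2_{H^3(\R^n)}$, which yields the claimed estimate.

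The step I expect to be the main obstacle is the correct handling of the two final-time boundary terms at $t=T$. Since in this section we drop the assumption that $\mathrm{supp}\,\varphi\pm T\omega$ is disjoint from $\Omega$, these terms no longer vanish and must be paired against the final-data components of $\widetilde{\mathcal{R}}$; the delicate point is to verify that the $\mathcal{K}$-norm of the adjoint data stays of order $\sigma$ (and not a higher power), so that the overall scaling remains $\sigma^3$ rather than degrading, the same power as in the purely boundary case of Lemma~\ref{Lemma 3.8}.
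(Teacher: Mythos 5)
Your proposal is correct and follows essentially the same route as the paper: the orthogonality identity from the system for $u=u_1-u_2$, substitution of the geometric optics forms, the $C\sigma^{3}\|\widetilde{\mathcal{R}}_{A_{2},q_{2}}-\widetilde{\mathcal{R}}_{A_{1},q_{1}}\|$ bound on the three response-operator pairings via the trace theorem, the $\sigma^{-1}+\sigma\|A\|_{L^{\infty}(\mathcal{I}_{r}^\sharp)}$ control of the remainder, and the replacement of $b_A$ by $1$ after the change of variables $y=x+t\omega$. The paper itself simply invokes ``proceed as in the proof of Lemma \ref{Lemma 3.8}'' combined with the final-time handling of Lemma \ref{Lemma4.1}, which is exactly what you carry out.
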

\begin{proof}{}
From the orthogonality identity one can see that 
\begin{multline*}
\int_{Q} q(x,t)u_2\overline{v}\,dx dt=\int_{\Sigma}(\widetilde{\mathcal{R}}^{1}_{A_{2},q_{2}}-\widetilde{\mathcal{R}}^{1}_{A_{1},q_{1}})(f)\overline{v}(x,t)d\sigma_x dt+\int_{\Omega}(\widetilde{\mathcal{R}}^{3}_{A_{2},q_{2}}-\widetilde{\mathcal{R}}^{3}_{A_{1},q_{1}})(f)\overline{v}(x,T)\,dx \cr
+\int_{\Omega}(\widetilde{\mathcal{R}}^{2}_{A_{2},q_{2}}-\widetilde{\mathcal{R}}^{2}_{A_{1},q_{1}})(f)\p_t
\overline{v}(x,T)
+\int_{Q}(V_A+2 i A\cdot \nabla) u_{2}(x,t)\overline{v}(x,t)dx\,dt.
\end{multline*}
Thus, if we replace  replacing $u_2$ and $v$ by their expressions,  use (\ref{3.22}) and since  $q=q_{2}-q_{1}=0$ outside
$\mathcal{I}_{r}^\sharp$,  we obtain after  making the change of variables $y=x+t\omega$ and  by proceeding   as in the proof of Lemma \ref{Lemma 3.8} the following estimation 
$$
\Big| \int_{0}^{T}\!\!\!\int_{\R^{n}}q(y-t\omega,t)\varphi^{2}(y)\,dy\,dt
 \Big|\leq C \Big( \sigma^{3}\|\widetilde{\mathcal{R}}_{A_{2},q_{2}}-\widetilde{\mathcal{R}}_{A_{1},q_{1}}\|
+\sigma\|A\|_{L^{\infty}(\mathcal{I}_{r}^\sharp)}+\frac{1}{\sigma}
\Big)\|\varphi\|_{H^{3}(\R^{n})}^{2},
$$
 which completes the proof of the lemma.
\end{proof}
The next step is to apply Lemma \ref{Lemma4.2} to the function $\varphi_{h}$ given by  (\ref{3.20}), except this time  
$y\notin\Omega$. Again,  using the fact that  $q=q_{2}-q_{1}=0$ outside $\mathcal{I}_{r}^\sharp$ and
repeating the same arguments of the Subsection \ref{subsection 3.3}, we complete the proof of Theorem \ref{Theorem 2.3}. 

Finally, in view of Lemma \ref{Lemma 2.1}, one can  easily establish the result given in Theorem \ref{Theorem 1.2}.

\section{Determination of the unknown terms from boundary measurements and final data by varying the initial conditions}\label{Section 5}
In this section, we focus on the same inverse problem treated before.  More precisely, our aim here is to extend the identification regions of the unknown coefficients by enlarging the set
of data. Indeed, Our data are now the responses of the medium measured for all
possible initial data.

For
$(V_{i},p_{i})\in\mathcal{C}^{3}(\overline{Q})\times\mathcal{C}^{1}(\overline{Q})$,
$i=1,\,2$, we define $(V,p)=(V_{2}-V_{1},p_{2}-p_{1})$ in $Q$ and
$(V,p)=(0_{\R^{n}},0)$ outside $Q$. We proceed  as in
the proof of Theorem \ref{Theorem 1.1} and Theorem \ref{Theorem 1.2}, we show 
a $\log$-type stability estimate for the recovery  of the velocity field $V$
 everywhere in $Q$ from the
measurement $\Gamma_{V,p}$. We proceed by a similar way for the recovery of  $p$ to end up obtaining an  estimate of 
$\log$-$\log$-type over the whole $Q$.

Note that in order to prove such estimates, we proceed as in the previous sections. We first need to prove Theorem \ref{Theorem 2.4}, but  in this case the
support of the function $\varphi_{h}$ defined by (\ref{3.20}) is more flexible. We do not need to take  any condition on its support.


\end{document}